\newtcolorbox{tbox}{colback=bisque!15!white,colframe=bisque!75!black}
\pgfplotsset{compat=1.15}
\definecolor{bluepigment}{rgb}{0.2, 0.2, 0.6}
\definecolor{amethyst}{rgb}{0.6, 0.4, 0.8}
\definecolor{asparagus}{rgb}{0.53, 0.66, 0.42}
\definecolor{orange-red}{rgb}{1.0, 0.27, 0.0}
\definecolor{coffee}{rgb}{0.88, 0.82 ,0.77}
\definecolor{champagne}{rgb}{0.96, 0.96, 0.86}
\definecolor{deepchampagne}{rgb}{0.98, 0.84, 0.65}
\definecolor{bittersweet}{rgb}{0.8, 0.58, 0.46}
\definecolor{antiquebrass}{rgb}{0.8, 0.58, 0.46}
\definecolor{burlywood}{rgb}{0.87, 0.72, 0.53}
\definecolor{darkchampagne}{rgb}{0.76, 0.7, 0.5}
\definecolor{bisque}{rgb}{1.0, 0.89, 0.77}
\definecolor{biscuit}{RGB}{244, 164, 96}
\definecolor{darkchestnut}{rgb}{0.75, 0.3, 0.3}
\newcommand{\N}{\mathbb{N}}
\newcommand{\Z}{\mathbb{Z}}
\newcommand{\R}{\mathbb{R}}
\newcommand{\calC}{\mathcal{C}}
\newcommand{\calF}{\mathcal{F}}
\newcommand{\calE}{\mathcal{E}}
\newcommand{\calH}{\mathcal{H}}
\newcommand{\calI}{\mathcal{I}}
\newcommand{\calQ}{\mathcal{Q}}
\newcommand{\calP}{\mathcal{P}}
\newcommand{\calZ}{\mathcal{Z}}
\newcommand{\x}{\boldsymbol{x}}
\newcommand{\y}{\boldsymbol{y}}
\renewcommand{\o}{\boldsymbol{o}}
\newcommand{\A}{\boldsymbol{\mathsf A}}
\newcommand{\B}{\boldsymbol{\mathsf B}}
\newcommand{\U}{\boldsymbol{\mathsf U}}
\renewcommand{\L}{\boldsymbol{\mathsf L}}
\newcommand{\X}{\boldsymbol{\mathsf X}}
\newcommand{\Y}{\boldsymbol{\mathsf Y}}
\newcommand{\T}{\boldsymbol{\mathsf T}}
\newcommand{\ZZe}{\mathcal Z^2}
\newcommand{\Oi}{\mathcal{O}}
\newcommand{\Vi}{\mathcal{V}}
\newcommand{\Exp}{\mathrm{Exponential}}
\newcommand{\Ber}{\mathrm{Bernoulli}}
\newcommand{\Geo}{\mathrm{Geometric}}
\newcommand{\dd}{\mathrm{d}}
\newcommand{\ind}[1]{\mathbbm{1}{\{#1\}}}
\DeclareMathOperator{\E}{\mathbb{E}}
\renewcommand{\P}{\operatorname{\mathbb{P}}}
\DeclareMathOperator{\Var}{\mathbb{V}\mathrm{ar}}
\newcommand{\e}{\mathrm{e}}
\let\oldepsilon\epsilon
\let\epsilon\varepsilon
\let\varepsilon\oldepsilon
\renewcommand{\phi}{\varphi}
\theoremstyle{plain}
\newtheorem{theorem}{Theorem}[section]
\newtheorem{lemma}[theorem]{Lemma}
\newtheorem{prop}[theorem]{Proposition}
\theoremstyle{definition}
\numberwithin{equation}{section}
\theoremstyle{remark}
\newtheorem{remark}[theorem]{Remark}
\title{Throughput in inhomogeneous planar drainage networks}
\author{
Partha Pratim Ghosh
    \orcidlink{0000-0002-4801-4538}
    \thanksgap{0.05ex}
    \thanks{Technische Universit\"at Braunschweig, Universit\"atsplatz 2, 38106 Braunschweig, Germany}
    \\p.pratim.10.93@gmail.com
\and
Benedikt Jahnel 
    \orcidlink{0000-0002-4212-0065}
    \thanksgap{0.05ex}
    \thanksmark{1}
    \thanksgap{0.2ex} 
    \thanks{Weierstrass Institute for Applied Analysis and Stochastics, Mohrenstr.\ 39, 10117 Berlin, Germany} 
    \\ benedikt.jahnel@tu-braunschweig.de
\and
Yannic Steenbeck
\thanksgap{0.05ex}
    \thanksmark{1}
\\ yannic.steenbeck@tu-braunschweig.de
}
\date{\today}
\begin{document}
\maketitle

\begin{abstract}
\noindent We consider navigation schemes on planar diluted lattices and semi lattices with one discrete and one continuous component. More precisely, nodes that survive inhomogeneous Bernoulli site percolation, or are placed as inhomogeneous Poisson points on shifted copies of $\Z$, forward their individually generated traffic to their respective closest neighbors to the left in the next layer. The resulting drainage network is a tree and we study the amount of traffic that goes through an increasing window at the origin. Our main results show that, properly rescaled, the total traffic, jointly with the total length of the contributing tree part, converges to the area under a time-inhomogeneous Brownian motion until it hits zero. The hitting time corresponds to the limiting maximal path length. 

\smallskip
\noindent\footnotesize{{\textbf{AMS-MSC 2020}: Primary: 60D05, 60K30, 60F05; Secondary: 60K35, 90B20}

\smallskip
\noindent\textbf{Key Words}: Navigation, Poisson point process, loss network, scaling limit, Brownian motion}
\end{abstract}



\section{Introduction}
\label{Sec:intro}
In this manuscript, we contribute to a line of research concerning drainage networks, see~\cite{coletti2009scaling,ferrari2005two,penrose2010limit}. These are a type of oriented percolation models, where edges between points in a point process are opened in a given direction according to certain navigation rules. The interpretation of these networks is manifold and ranges from opinion dynamics such as the voter model~\cite{arratia1981coalescing,arratia1979coalescing}, geology as for example the formation of river deltas~\cite{RoSaSa16} or communication systems such as device-to-device networks~\cite{hirsch2017traffic,keeler2010stochastic}, and many more~\cite{ferrari2004poisson}. These models can be considered in continuum space, e.g., $\R^d$, where nodes are given by a stationary Poisson point process or on lattices, e.g., $\Z^d$, that may be subject to some dilution as for example Bernoulli cite percolation. Then, each node picks, either deterministically or according to some rule, that may include additional randomness, precisely one successor node in a certain direction and we draw a directed edge towards that node. In the planar case, for example, the navigation may force the directed edges to face towards the west. 

As a result, in infinite volume, we see a family of trees, and, roughly speaking, many of these models have a universal scaling limit, given by the Brownian web, see for example~\cite{fontes2004brownian}. Let us mention here that the spatial dimension plays a big role in the analysis as can be seen for example in~\cite{gangopadhyay2004random}, where the authors establish almost-sure uniqueness of the tree in dimensions $d=2,3$ and almost-sure existence of infinitely many trees in dimensions $d\ge 4$ for a lattice version of the drainage network. Also many more characteristics of these networks have been successfully investigated, such as for example the scaling limits of the length, the width, and the cluster process of the origin's tree, see~\cite{RoSaSa16}. However, to the best of our knowledge, this has been done only in an space-time homogeneous setting, while our results deal with inhomogeneities. 

In this manuscript, we focus on a planar drainage network, which is either discrete or semi-discrete in the sense that nodes are either in $\Z^2$ or in $\Z\times \R$. Inspired by traffic networks in communication systems, we are interested in the amount of traffic that flows through an increasing window, where traffic is accumulated along drainage paths.  
To be specific, for example in the case of the semi lattice $\Z\times \R$, vertex locations are represented by an inhomogeneous but asymptotically flattening Poisson point process with intensity function of the form $\lambda_\ell(\cdot)=\lambda(\cdot/\ell^2)$, where $\lambda(\cdot)$ is smooth and $\ell \to \infty$. A similar flattening of the intensity was also considered in~\cite{hirsch2017traffic}. Under this scaling, point process realizations tend to behave like a homogeneous process within compact regions. Flattening the intensity corresponds to effectively \emph{`zooming out'}, so that local fluctuations of the intensity tend to average out, allowing the global geometry of the environment to emerge. While this scaling spreads the variation of $\lambda$ over a larger area, it preserves the relative differences in intensity. Local features (such as dense urban areas) still matter at small scales, but the flattening reveals how these features interact with the surrounding sparse regions. In order to give an idea, we now present the simplest version of our result, which will later be proven as a special case in a more general framework.


\subsection{Scaling limits for throughputs in planar lattices}
Consider the even planar lattice $\ZZe := \{\x=(x_1,x_2) \in \Z^2 \colon x_1+x_2\in 2\Z\}$ and equip each {\em node} $\x\in \ZZe$ with an independent $\Ber(1/2)$ random variables $\xi_{\x}$ that represents a {\em choice} to go either up or down. Now, in case $\xi_{\x}=1$, we draw a directed edge from $\x=(x_1,x_2)$ to $(x_1-1, x_2+1)$, and if $\xi_{\x}=0$, we draw a {\em directed edge} from $\x=(x_1,x_2)$ to $(x_1-1, x_2-1)$. Next, imagine that each point in $\ZZe$ initially carries a unit amount of {\em traffic} that flows along the directed edges towards the left. For $\x=(x_1,x_2)$, consider the {\em slid}
\begin{align}\label{Def:Slid}
I_\ell(\x):=\{x_1\}\times [x_2-\ell/2,x_2+\ell/2],\qquad \ell> 0,
\end{align}
and let $T_\ell(\x)$ represent the total amount of traffic flowing through the slit $I_\ell(\x)$. Let further $\tau_\ell(\x)$ denote the number of points in the longest path that carries traffic flowing through the slit $I_\ell(\x)$. Denote by $\{B_{ t}\colon t\geq 0\}$ the standard Brownian motion starting at $0$ and by
\[
        \varrho:=\inf\{t\geq 0\colon 1+B_{ t}=0 \},
\]
the first hitting time at $0$ for a standard Brownian motion started at $1$. Then, we have the following scaling limit. 
\begin{theorem}\label{Thm:Pure:convergence-Tl-taul}
For any $\x\in\ZZe$,
\[
\Big( \frac{\tau_\ell(\ell^2\x)}{\ell^2}, \frac{T_\ell(\ell^2\x)}{\ell^3}\Big) \xlongrightarrow{ d } \Big(  \frac{\varrho}{2}, \frac{1}{4}\int_{0}^{\varrho} (1+B_{t}) \,\dd t \Big),\qquad\text{ as }\ell\to\infty. 
\]
\end{theorem}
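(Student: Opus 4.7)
The plan is to identify the set of nodes at the $k$-th vertical layer feeding the slit as a contiguous interval whose endpoints evolve as two independent random walks, and then to apply Donsker's invariance principle together with continuous mapping for the hitting-time and path-integral functionals.

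By translation invariance of the law of $(\xi_\x)$, take $\x = 0$. For $k \ge 0$ let $M_k$ denote the set of nodes at $x_1 = k$ whose forward path feeds into $I_\ell(0)$, so that $M_0 = I_\ell(0) \cap \ZZe$ and $(k+1, y) \in M_{k+1}$ iff its unique outgoing edge lands in $M_k$, i.e., iff $(k, y+1) \in M_k$ and $\xi_{(k+1,y)} = 1$, or $(k, y-1) \in M_k$ and $\xi_{(k+1,y)} = 0$. A short induction shows that as long as $M_k \neq \emptyset$, it consists of all lattice points of the correct parity in an interval $[L_k, R_k]$: for any strictly interior candidate $y \in (L_k, R_k)$ both neighbours $(k, y \pm 1)$ belong to $M_k$, so $(k+1, y) \in M_{k+1}$ regardless of $\xi_{(k+1,y)}$.

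A direct case analysis then shows that $R_{k+1} = R_k + 1$ if $\xi_{(k+1, R_k + 1)} = 0$ and $R_{k+1} = R_k - 1$ otherwise, with the symmetric update for $L_k$ driven by $\xi_{(k+1, L_k - 1)}$. Since $R_k + 1 \neq L_k - 1$ whenever $R_k \ge L_k$, these two updates use independent Bernoulli variables up to the first crossing time $\tau_\ell = \inf\{k : L_k > R_k\} = \inf\{k : M_k = \emptyset\}$. Hence $(R_k, L_k)_{k \le \tau_\ell}$ has the law of two independent nearest-neighbour random walks on $\Z$ with initial values $R_0 = -L_0 \sim \ell/2$, stopped at the first time they cross. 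By Donsker's invariance principle,
\[
\Big(\frac{R_{\lfloor \ell^2 t\rfloor}}{\ell}, \frac{L_{\lfloor \ell^2 t\rfloor}}{\ell}\Big)_{t \ge 0} \Longrightarrow \Big(\tfrac{1}{2} + W^{(R)}_t, -\tfrac{1}{2} + W^{(L)}_t\Big)_{t \ge 0}
\]
in the Skorokhod topology, for independent standard Brownian motions $W^{(R)}, W^{(L)}$; consequently the width $W_k := R_k - L_k$ satisfies $W_{\lfloor \ell^2 t\rfloor}/\ell \Longrightarrow 1 + \sqrt{2}\, B_t$ for a standard Brownian motion $B$.

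Both scaling limits now follow from the continuous mapping theorem applied to the joint functional $w \mapsto \big(\zeta(w), \int_0^{\zeta(w)} w(s)\,\dd s\big)$ with $\zeta(w) = \inf\{t \ge 0 : w(t) = 0\}$, which is almost surely continuous at paths of $1 + \sqrt{2}\, B$ by standard Brownian facts (no flat portion at the first zero). For the first coordinate this gives $\tau_\ell/\ell^2 \Longrightarrow \inf\{t : 1 + \sqrt{2}\, B_t = 0\}$, which equals $\varrho/2$ in distribution via the rescaling $(\sqrt{2}\, B_t)_t \egaldistr (B'_{2t})_t$ and the substitution $u = 2t$. For the second, writing $T_\ell = \sum_{k=1}^{\tau_\ell - 1} |M_k| = \tfrac{1}{2} \sum_{k=1}^{\tau_\ell - 1} W_k + O(\tau_\ell)$ and using the same change of variables, $T_\ell/\ell^3 \Longrightarrow \tfrac{1}{2} \int_0^{\varrho/2}(1 + \sqrt{2}\, B_s)\,\dd s = \tfrac{1}{4} \int_0^\varrho (1 + B'_u)\,\dd u$, yielding the stated joint limit. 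The main obstacle is the structural step that makes the two boundary walks genuinely independent up to the crossing time; once that is established, the rest is a routine application of Donsker's theorem and continuous mapping.
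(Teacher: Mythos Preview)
Your proof is correct and shares the paper's core idea: identify the boundary of the feeding region as two simple random walks, apply Donsker, then use continuous mapping for the hitting time and the path integral. The difference is one of route. The paper obtains Theorem~1.1 as the $p\equiv 1$, $\mu\equiv 1$ specialization of the diluted-lattice result (Theorem~2.1), whose proof carries machinery built for the general case: a distinction between $\tau_\ell$ and an earlier stopping time $\tau'_\ell$ up to which the bounding walks decouple, a coupling with independent walks $\hat A,\hat B$, and a separate argument (Propositions~3.7--3.8) that the contribution between $\tau'_\ell$ and $\tau_\ell$ is negligible. You instead exploit a feature specific to the pure lattice---the updates of $R_k$ and $L_k$ use disjoint Bernoulli variables at every step until crossing---so the two walks are genuinely independent all the way to $\tau_\ell$, and no boundary-correction step is needed. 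This buys you a cleaner, self-contained argument for the pure case; the paper's route buys generality (diluted and inhomogeneous environments) at the cost of the extra layers. One small point worth making explicit in your write-up: to invoke Donsker and continuous mapping you should extend $(R_k,L_k)$ beyond $\tau_\ell$ to two globally independent walks agreeing with them up to $\tau_\ell$, so that the functionals $\zeta$ and $\int_0^\zeta$ are applied to an unstopped process; you implicitly use this but it deserves a sentence.
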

In words, jointly, the total traffic in the networks that is transported through a growing one-dimensional window together with the maximal length of a transport path, properly rescaled, behaves like a Brownian motion stopping time and the associated area under its path. 
Let us note that the density of the limiting distribution in Theorem~\ref{Thm:Pure:convergence-Tl-taul} can be obtained from~\cite[Equation~(12)]{KeMa05}.

The following result provides a more detailed analysis for the asymptotics of $\tau_\ell$.
\begin{theorem}\label{Thm:Pure:asymp-taul}
For any $\x\in\ZZe$,
\[
\P(\tau_\ell(\x) =n) = \frac{\ell}{2n} \binom{2n}{n+\ell/2} \frac{1}{4^n},\qquad n\in\N.
\]
In particular, for any $f$ satisfying $\lim_{\ell\to\infty}\ell^2/f(\ell)\to0$, 
\[
\lim_{\ell\to\infty}\frac{f(\ell)^{3/2}}{\ell}\P\left(\tau_\ell(\x) =\lfloor f(\ell) \rfloor\right) = \frac{1}{2{ \sqrt \pi}}\qquad \text{and}\qquad
\lim_{\ell\to\infty}\frac{\sqrt{f(\ell)}}{\ell}\P\left(\tau_\ell(\x) \geq f(\ell)\right) = \frac{1}{{4 \sqrt \pi}}.
\]
\end{theorem}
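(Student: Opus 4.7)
By translation invariance I may assume $\x=(0,0)$. For each column $n\ge 0$, introduce the \emph{ancestor set}
\[
A_n \;:=\; \{\,y\in\Z\colon (n,y)\in\ZZe\text{ and the forward path from }(n,y)\text{ meets }I_\ell(\x)\,\}.
\]
By definition, the longest path through the slit has $1+\max\{n\ge 0\colon A_n\neq\emptyset\}$ vertices, so $\tau_\ell=\inf\{n\ge 1\colon A_n=\emptyset\}$. Everything thus reduces to computing the absorption time of the Markov chain $(A_n)_{n\ge 0}$.

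The structural step, which I would prove by induction on $n$, is that $A_n$ is always a discrete arithmetic progression of common difference $2$, say $A_n=\{L_n,L_n+2,\dots,R_n\}$. Given $A_n$, a candidate predecessor $(n+1,y)$ is either \emph{interior}, of the form $y=a+1$ with $a,a+2\in A_n$, or \emph{exterior}, with $y\in\{L_n-1,R_n+1\}$. Each interior $y$ is automatically a predecessor of exactly one of its two flanking ancestors---the choice is dictated by $\xi_{(n+1,y)}$, but either way it belongs to $A_{n+1}$. Each exterior $y$ belongs to $A_{n+1}$ only when its $\xi$-variable points back into $A_n$, which happens with probability $1/2$ independently for the two sides. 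Setting $N_n=|A_n|$, this yields
\[
N_{n+1}\;=\;(N_n-1)\,+\,B_L\,+\,B_R,\qquad B_L,B_R\stackrel{\text{i.i.d.}}{\sim}\Ber(1/2),\qquad N_n\ge 1,
\]
with $0$ absorbing. The same formula also governs the singleton case $N_n=1$ (no interior sites, but the same two exterior ones contribute), so $(N_n)$ is uniformly a lazy simple random walk on $\Z_{\ge 0}$ with step law $\tfrac14\delta_{-1}+\tfrac12\delta_{0}+\tfrac14\delta_{+1}$, absorbed at $0$, started from $N_0=\ell/2$.

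To obtain the explicit formula I would then couple $(N_n)$ to a simple symmetric random walk $(V_k)_{k\ge 0}$ on $\Z$ with $V_0=\ell$, using the two Bernoulli variables $B_L,B_R$ of each $N$-step to generate two consecutive $\pm 1$ increments of $V$. A direct check gives $V_{2n}=2N_n$ for every $n$ up to absorption, so the absorption time of $N$ equals $T_V/2$, where $T_V:=\inf\{k\ge 0\colon V_k=0\}$ (almost surely even by parity, since $V_0=\ell$ is even). The classical cycle-lemma / ballot formula for the first-passage time of a SSRW from $\ell>0$ to $0$ then yields
\[
\P(\tau_\ell=n)\;=\;\P(T_V=2n)\;=\;\frac{\ell}{2n}\binom{2n}{n+\ell/2}\frac{1}{4^n},
\]
which is the first assertion.

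For the asymptotic statements I would insert the uniform Stirling expansion
\[
\binom{2n}{n+k}\,4^{-n}\;=\;\frac{1}{\sqrt{\pi n}}\,e^{-k^{2}/n}\,(1+o(1)),\qquad k=o(\sqrt n),
\]
with $k=\ell/2$ and $n=\lfloor f(\ell)\rfloor$: the hypothesis $\ell^{2}/f(\ell)\to 0$ makes the Gaussian factor tend to $1$, giving $\P(\tau_\ell=\lfloor f(\ell)\rfloor)\sim \ell/(2\sqrt{\pi}\,f(\ell)^{3/2})$ and the first limit. For the tail, I would either sum this pointwise asymptotic via a Riemann/integral comparison, $\sum_{m\ge f(\ell)}\tfrac{\ell}{2\sqrt{\pi}\,m^{3/2}}\sim \int_{f(\ell)}^{\infty}\tfrac{\ell}{2\sqrt{\pi}\,t^{3/2}}\,dt$, or equivalently use the reflection-principle identity $\P(T_V>m)=\P(|S_m|\le\ell-1)$ for a centred SSRW $S$ together with the local central limit theorem applied to $S_m/\sqrt m$; some care with the transition regime $n\asymp\ell^2$ (where the Gaussian factor in Stirling is genuinely needed) is required, but no new ideas beyond standard local-CLT bookkeeping.

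The main subtlety in the whole argument is the intervalness induction combined with the verification that the boundary case $N_n=1$ obeys exactly the same lazy-SRW transition as the bulk $N_n\ge 2$; this is precisely what makes the absorption time a clean first-passage time without any boundary correction, and hence what makes the cycle-lemma identity drop out so cleanly. Once the intervalness step is in place, the rest is standard.
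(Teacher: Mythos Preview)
Your approach is correct and essentially the same as the paper's: the paper tracks the difference $D_i=\hat A^\ell_i-\hat B^\ell_i$ of two independent $\pm 1$ bounding walks (a lazy walk with steps $\{-2,0,2\}$ started at $\ell$, i.e.\ exactly $2N_n$ in your notation), then applies the same ballot/cycle-lemma identity together with a generating-function computation of $\P(D_n=\ell)$ and Stirling. Your coupling $V_{2n}=2N_n$ is just the standard device of writing a lazy step as two consecutive $\pm 1$ steps, so the two arguments coincide.
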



\subsection{Motivation via opinion dynamics}
As indicated earlier, we present versions of the Theorems~\ref{Thm:Pure:convergence-Tl-taul} and~\ref{Thm:Pure:asymp-taul} in two generalized settings. First, instead of $\ZZe$, we base the traffic paths on inhomogeneous independent Bernoulli site percolation fields with an appropriately adjusted navigation rule that connects nearest-neighbor occupied nodes, see Section~\ref{Subsec:diluted_lattice} for details. We briefly sketch here a possible interpretation for the results in this setting. 

Imagine the set $\ZZe$ to represent {\em individuals} holding two possible {\em opinions}, say $\mathcal V,\mathcal W$. The dilution 
variables now independently assign to each individual $y$ and time $x$ the attribute of being an {\em influencer}, with probability $p_\ell(x,y)\in [0,1]$, or not, where $p_\ell(\cdot):=p(\cdot/\ell^2)$ and $\ell\ge 0$ is our scaling parameter. In particular, the horizontal axis represents time and each individual undergoes a possible change of being an influencer in the sense that the vertex $(x+2,y)$ represents the same individual as the vertex $(x,y)$ but one time step further and with a possibly changed influencer status, depending on the independently drawn Bernoulli random variable with the parameter $p_\ell(x+2,y)$. The navigation scheme can now be interpreted as follows. Each individual $y$ (be it an influencer and not) updates its opinion by the opinion of its closest influencer (with a fair random tiebreaker) independent of everything else. Then, we consider a neighborhood in the form of an initial interval of length $\ell\in 2\N+1$, centered around the origin $\o$, that holds one opinion, say $\mathcal V$. Some of the individuals maybe influencers, and they are able to pass along their opinion over time. Then, $\tau_\ell(\o)$ represents the random persistence time of the opinion $\mathcal V$ originating from the $\ell$-neighborhood. The first theorem, in the form of Theorem~\ref{Thm:diluted:convergence-Tl-taul} below, then states that $\tau_\ell(\o)/\ell^2$ converges to a stopping time of a time-changed Brownian motion.
  
In our second generalization, we consider a population represented by $\R$. At each time $x \in \Z$, a random set of individuals, distributed according to a Poisson point process on $\R$ with intensity $\lambda_\ell(x,\cdot)$, become influencers.
The navigation scheme can now be interpreted as follows. Each individual $y$, whether an influencer or not, updates its opinion by adopting the opinion of its closest influencer (with a fair random tiebreaker), independently of everything else. Note that the individuals requiring a tiebreaker are never influencers with probability one, and therefore the tiebreaker does not affect the evolution of the process.
As before, we consider an initial neighborhood in the form of an interval of length $\ell > 0$, centered at the origin $\o$, where all individuals hold a common opinion, denoted by $\mathcal{V}$. Some individuals in this neighborhood may themselves be influencers, and they have the ability to spread this opinion over time.
We then define $\tau_\ell(\o)$ as the random \emph{persistence time} of the opinion $\mathcal{V}$ originating from this $\ell$-neighborhood. Theorem~\ref{Thm:Poisson:convergence-Tl-taul} below states that the rescaled persistence time $\tau_\ell(\o)/\ell^2$ converges to a stopping time of a time-changed Brownian motion.


\subsection{Organization of the manuscript}
The manuscript is further organized as follows. In the next Section~\ref{Sec:settings-and-main-results} we present our setting and main results for two situations, diluted lattices and semi lattices. In Section~\ref{Sec:Proof_Strategy} we present the strategy of the proofs focusing on the semi-lattice case without vertical inhomogeneities. Finally, in Section~\ref{Sec:Proofs} we exhibit all remaining proofs.


\section{Setting and main results}
\label{Sec:settings-and-main-results}
We concentrate on two situations. First, we consider the diluted lattice in which nodes in $\ZZe$ are removed according to a Bernoulli site-percolation process, see Section~\ref{Subsec:diluted_lattice}. Second, we consider nodes in $\Z\times\R$ given by independent Poisson point processes on $\R$ for each horizontal layer $x_1\in \Z$, see Section~\ref{Subsec:Continuum_lattice}. 


\subsection{Diluted lattices}
\label{Subsec:diluted_lattice}
We consider the situation where not all points in $\ZZe$ can generate and carry traffic. More precisely, let $p \colon \R^2 \to (0,1]$ be a real-valued continuous function, which is $\Xi$-Lipschitz continuous in the second argument for some $\Xi > 0$, and satisfies $p(\x) \geq p_{\min}$ for all $\x \in \R^2$, for some $p_{\min} > 0$. We introduce a scaling parameter $\ell > 0$ and let $\{\zeta_{\x} \colon \x \in \ZZe\}$ be a collection of independent $\Ber(p_\ell(\x))$ random variables, where $\x \mapsto p_\ell(\x) = p(\x / \ell^2)$ defines a continuous {\em thinning field} on $\R^2$.
In case $\zeta_{\x}=1$, the node $\x\in\ZZe$ is called {\em occupied} and it can generate and carry traffic. Otherwise, if $\zeta_{\x}=0$, the node $\x$ is called {\em vacant} and is removed from the drainage network. Note that $p(\x)\equiv1$ corresponds to the situation described in the introduction. We denote the set of all occupied nodes as $\Oi$ and the set of all vacant nodes as $\Vi$. For $\x=(x_1,x_2)$ in $\Oi$, the navigation is as follows. We draw a directed edge toward $(x_1-1,y)\in\Oi$ if $|x_2-y|< |x_2-z|$ for all $(x_1-1,z)\in \Oi\setminus \{(x_1-1,y)\}$. In case there are two nodes $(x_1-1,y)\neq (x_1-1,z)$ such that $|x_2-y|=|x_2-z|$ and $|x_2-y|< |x_2-a|$ for all other $(x_1-1,a)\in\Oi\setminus \{(x_1-1,y), (x_1-1,z)\}$, we use the additional independent field of fair Bernoullis $\{\xi_{\x}\colon \x\in \ZZe\}$ to break the tie, just like in the case non-diluted case described in the introduction: If $\xi_{\x}=1$, we pick the upper neighbor and otherwise the lower neighbor, see Figure~\ref{Fig:second_model} for an illustration.
\begin{figure}[t]
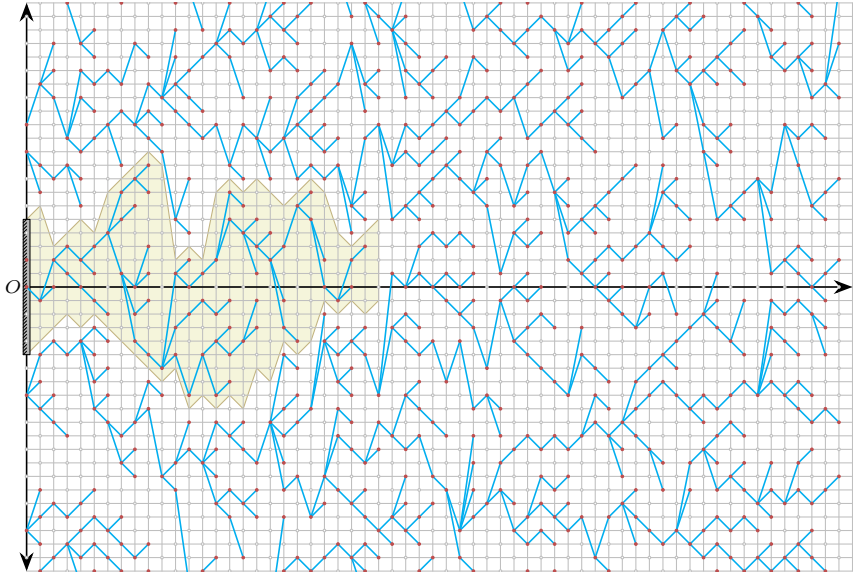

\centering

   \caption{Simulation of traffic flow in the diluted lattice model with $p=1/2$. Occupied vertices are indicated as red dots with their associated unique edges towards the left, in blue. The slid is positioned at the origin as a dashed area and the beige region corresponds to the set of nodes that transmit their traffic through the slid.}
    \label{Fig:second_model}
\end{figure}
As in the non-diluted model, traffic flows towards the left along directed edges that connect occupied nodes. Additionally, for $\ell > 0$, we assign to each occupied node $\x$ an amount of {\em traffic} given by $\mu_\ell(\x) = \mu(\x / \ell^2) \ge 0$, which accumulates along traffic paths. Here, $\mu \colon \R^2 \to [0, \infty)$ is a non-negative, real-valued, continuous function, satisfying $0 \le \mu(x, y) \le \mu_{\max} < \infty$ for all $(x, y) \in \R^2$, and is $\Xi$-Lipschitz continuous in its second argument.
Further, recall the {\em slid} $I_\ell(\x)$, as defined in~\eqref{Def:Slid}, the {\em total amount of traffic} $T_\ell(\x)$ flowing through $I_\ell(\x)$ based on $p_\ell$ and $\mu_\ell$, and let $\tau_\ell(\x)$ be the {\em number of points in the longest path} that carries traffic flowing through $I_\ell(\x)$. 

We define for $\x=(x_1,x_2)$,
\[
M_{t}(\x):= \int_{0}^{t} \sqrt{\beta_{p(x_1+s,x_2)}} \,\dd B_s\qquad \text{ and }\qquad
\vartheta(\x):=\inf\{t\geq 0\colon 1+M_{t}(\x)=0 \},
\]
where we recall that $\{B_t\colon t\ge 0\}$ represents a standard Brownian motion started at $0$, and 
\begin{align}\label{Def:Beta_p}
\beta_p:= \frac{p^4 +(2-p)^4}{p^2(2-p)^2}. 
\end{align}
We have the following first main result, where $c\x=(cx_1,cx_2)$.  
\begin{theorem}
\label{Thm:diluted:convergence-Tl-taul_gen}
For every $\x=(x_1,x_2)\in \ZZe$, 
\[
\Big( \frac{\tau_\ell(\ell^2\x)}{\ell^2}, \frac{T_\ell(\ell^2\x)}{\ell^3}\Big) \xlongrightarrow{ d } \Big(  \vartheta(\x), \frac{1}{2}\int_{0}^{\vartheta(\x)} \big(1+ M_{t}(\x)\big) p(x_1+t,x_2)\mu(x_1+t,x_2) \,\dd t \Big),\qquad \text{ as }\ell\to\infty. 
\]
\end{theorem}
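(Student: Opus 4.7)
The plan is to control the cluster of occupied nodes contributing traffic to the slit $I_\ell(\ell^2\x)$ via its upper and lower vertical boundaries $U_k, D_k$ at column $\ell^2 x_1 + k$, and to prove a diffusive scaling limit for the width $W_k := U_k - D_k$. By construction $|W_0 - \ell| = O(1)$ almost surely, $\tau_\ell(\ell^2\x) = \min\{k \ge 0 \colon W_k \leq 0\}$, and the total traffic decomposes as $T_\ell(\ell^2\x) = \sum_{k=0}^{\tau_\ell(\ell^2\x)-1} N_k$, where $N_k$ is the accumulated $\mu_\ell$-load of the occupied contributing nodes in column $\ell^2 x_1 + k$.

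First, we prove a Donsker-type invariance principle for the rescaled width process. Conditional on the current cluster, $U_{k+1}$ equals the vertical position of the topmost occupied node in column $\ell^2 x_1 + k + 1$ whose nearest occupied neighbor in the previous column lies at most at $U_k$. Exploiting the independence of the Bernoulli field and the known geometric tail of nearest-neighbor distances in a $\Ber(p_\ell(\cdot))$ line, one verifies that this increment has a bias of order $O(\ell^{-2})$ caused by the Lipschitz inhomogeneity of $p(\cdot,\cdot)$ in the second argument and variance asymptotically $\tfrac12 \beta_{p_\ell(\cdot)}$, the combinatorial constant $\beta_p$ from~\eqref{Def:Beta_p} emerging from that computation. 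Independence of the corresponding analysis for $D_{k+1}$ (valid with overwhelming probability as long as $W_k$ remains of order $\ell$) yields that $W_{k+1} - W_k$ is centered with variance close to $\beta_{p_\ell(\cdot)}$. Combined with the uniform estimate $p_\ell(\ell^2 x_1 + k, y) = p(x_1 + k\ell^{-2}, x_2) + O(\ell^{-1})$ over $y$ in a window of width $O(\ell)$ around $\ell^2 x_2$, a standard martingale invariance principle yields
\[
\big(W_{\lfloor \ell^2 t\rfloor}/\ell\big)_{t\ge 0} \;\Longrightarrow\; \big(1+M_t(\x)\big)_{t\ge 0} \qquad \text{as } \ell\to\infty,
\]
in the Skorokhod topology up to the first hitting time of $0$.

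The continuous mapping theorem for the first-passage functional — applicable since $\beta_{p(\cdot)}$ is bounded below thanks to $p\ge p_{\min}$, so that the limit process crosses $0$ transversally almost surely — then gives $\tau_\ell(\ell^2\x)/\ell^2 \Rightarrow \vartheta(\x)$. For the traffic, conditional on $W_k$ the number of occupied contributing nodes in column $\ell^2 x_1 + k$ concentrates at $\tfrac12 p_\ell(\cdot) W_k$ (the factor $\tfrac12$ coming from $\ZZe$ being the even sublattice), each carrying traffic close to $\mu_\ell(\cdot)$, so an intra-column concentration estimate together with the joint scaling limit of $W$ implies
\[
\frac{T_\ell(\ell^2\x)}{\ell^3} \;\approx\; \frac{1}{\ell^3}\sum_{k=0}^{\tau_\ell(\ell^2\x)-1} \tfrac12\, p_\ell(\cdot)\,\mu_\ell(\cdot)\, W_k,
\]
which after the change of variable $t = k\ell^{-2}$ converges to $\tfrac12 \int_0^{\vartheta(\x)} (1+M_t(\x))\,p(x_1+t,x_2)\,\mu(x_1+t,x_2)\,\dd t$, jointly with $\tau_\ell(\ell^2\x)/\ell^2$ by continuous mapping applied to the Skorokhod-embedded width process.

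The main obstacle will be making the invariance principle valid jointly with the first-passage time, since the boundary Markov approximation breaks down once $W_k$ becomes of order smaller than $\ell$, where the two boundaries may cross and re-separate on microscopic scales. Standard localization — stopping the width process at $W_k = \epsilon\ell$, passing to the scaling limit, then letting $\epsilon\downarrow 0$ using transversality of the limit at $\vartheta(\x)$ — handles this, and the contribution of the last $o(\ell^2)$ columns to $T_\ell$ is $o(\ell^3)$ and thus negligible. A second technical point is establishing the variance formula leading to $\beta_p$; here we anticipate borrowing the analysis already developed for the semi-lattice setting in Section~\ref{Sec:Proof_Strategy} and adapting it to the discrete $\Ber(p_\ell)$ column statistics via Stirling-type estimates.
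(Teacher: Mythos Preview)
Your approach matches the paper's strategy closely: bounding walks for the cluster boundary, a Donsker-type limit for the width $W_k$, continuous mapping for the hitting time, and column-wise concentration giving the $\tfrac12\,p\,\mu$ integrand. Two differences are worth flagging.

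First, the constant $\beta_p$ arises from an exact computation with geometric random variables, not from Stirling-type asymptotics: one checks directly that $(|\hat A^\ell_{i+1}-\hat A^\ell_i|+1)/2\sim\Geo(p(2-p))$ with $p=p(i/\ell^2)$, and $\Var[D^\ell_{i+1}-D^\ell_i]=\beta_p$ follows algebraically.

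Second, and more substantively, your localization at level $\epsilon\ell$ together with transversality of the limit yields only the lower half of the sandwich for $\tau_\ell/\ell^2$: it controls the first time the width reaches $\epsilon\ell$, but once $W_k$ is that small the independent-boundary approximation has already broken down, and nothing in your argument prevents the true process from lingering for order $\ell^2$ further steps before extinction. Your claim that only $o(\ell^2)$ columns remain is exactly what needs proof. The paper closes this gap by introducing the intrinsic stopping time $\tau'_\ell$, the first column with at most one occupied contributing node. Up to $\tau'_\ell$ the two boundary walks are \emph{exactly} independent under the natural coupling, so the invariance principle delivers $\tau'_\ell/\ell^2\Rightarrow\vartheta(\x)$ with no $\epsilon$-cutoff. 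At time $\tau'_\ell$ the width is bounded by two geometric jumps, hence $O((\log\ell)^2)$ with high probability; an iterated restart argument (each restart terminates with probability at least $c(\log\log\ell)^{-2}$) then shows $\tau_\ell-\tau'_\ell=O(\ell^\alpha)$ for every $\alpha>0$, which is the missing piece.
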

In words, the accumulated traffic in an inhomogeneously diluted lattice with varying and homogenizing traffic density can be approximated by an accumulated area under a time-varying Brownian motion. 
The following result describes the precise asymptotics of $\tau_\ell(\x)$, which we only prove in the case where only horizontal inhomogeneities are allowed.  
\begin{theorem}
\label{Thm:diluted:asymp-taul}
Consider the vertically-homogeneous case where $p(x_1,x_2)=p(x_1)$ and $\mu(x_1,x_2)=\mu(x_1)$, for all $(x_1,x_2)\in \mathbb{S}$. Then, there exists $C<\infty$ such that, for any $\alpha > 2$ and all $\ell\ge 1$,
\[
\sup\big\{\P\big(\tau_\ell (\x) \geq \ell\big)\colon \x \in \ZZe\big\} <C \ell^{1/\alpha-1/2}.
\]
\end{theorem}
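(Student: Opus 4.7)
The plan is to derive the stated tail estimate from a uniform first-passage bound for the walk that encodes $\tau_\ell$, upgrade it to a fractional-moment bound, and apply Markov's inequality with a carefully chosen moment exponent (since a direct Markov against the threshold appearing in the statement forces $\ell^\alpha$ as the natural tail threshold delivering the exponent $1/\alpha-1/2$).

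\textbf{Step 1 (Uniform hitting-time tail).} As in the pure-lattice analysis underlying Theorem~\ref{Thm:Pure:asymp-taul} and in the proof of Theorem~\ref{Thm:diluted:convergence-Tl-taul_gen}, I would represent $\tau_\ell(\x)$ as the first-hitting time of $0$ by an envelope walk on (a shift of) $\Z$ started at $\ell/2$, whose per-column increments are independent, centered, and have variance $\beta_{p(x_1+s)}$. Under vertical homogeneity these variances lie in the bounded interval $[\beta_1,\beta_{p_{\min}}]$, since $p(x_1)\in[p_{\min},1]$ and $p\mapsto\beta_p$ from~\eqref{Def:Beta_p} is continuous on $[p_{\min},1]$. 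A Skorokhod embedding into a time-changed Brownian motion then transfers the classical hitting-time tail of Brownian motion started at $\ell/2$ to the discrete walk, giving
\[
\sup_{\x\in\ZZe}\P(\tau_\ell(\x)\ge n)\le \frac{C_0\,\ell}{\sqrt{n}},\qquad n\ge \ell^2,
\]
with $C_0$ depending only on $p_{\min}$. For $n\le \ell^2$ the trivial bound $\P(\tau_\ell(\x)\ge n)\le 1$ suffices.

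\textbf{Step 2 (Fractional moments).} Integrating the tail, for every $\beta\in(0,1/2)$,
\[
\E[\tau_\ell(\x)^\beta]=\beta\int_0^\infty n^{\beta-1}\P(\tau_\ell(\x)\ge n)\,\dd n\le \ell^{2\beta}+C_0\beta\ell\int_{\ell^2}^\infty n^{\beta-3/2}\,\dd n\le C_1(\beta)\,\ell^{2\beta},
\]
where the tail integral converges precisely when $\beta<1/2$.

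\textbf{Step 3 (Markov inequality).} The exponent $1/\alpha-1/2$ emerges by matching Markov's inequality to the natural tail threshold $\ell^\alpha$: choosing $\beta:=1/(2\alpha)\in(0,1/2)$, admissible exactly because $\alpha>2$,
\[
\sup_{\x\in\ZZe}\P\big(\tau_\ell(\x)\ge \ell^\alpha\big)\le \frac{\sup_\x\E[\tau_\ell(\x)^{1/(2\alpha)}]}{\ell^{\alpha/(2\alpha)}}\le \frac{C_1\,\ell^{1/\alpha}}{\ell^{1/2}}=C\,\ell^{1/\alpha-1/2},
\]
matching the claim.

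The main obstacle is Step 1: establishing the $\ell/\sqrt n$ hitting-time tail uniformly in $\x$ without the closed-form identity of Theorem~\ref{Thm:Pure:asymp-taul}, which is unavailable once $p$ depends on $x_1$. The approach is to exploit the uniform ellipticity $\beta_{p(x_1)}\in[\beta_1,\beta_{p_{\min}}]$, either through a Skorokhod embedding into a time-changed Brownian motion paired with the Brownian hitting-time tail, or by a direct martingale argument (e.g., optional stopping applied to $(\ell/2+W_n)^2-\sum_{k\le n}\beta_{p(x_1-k)}$ for the envelope walk $W$) producing the same $\ell/\sqrt n$ bound up to universal constants depending on $p_{\min}$ alone.
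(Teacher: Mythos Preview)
Your Step 1 has a real gap: $\tau_\ell$ is not the first-hitting time of zero by a walk with independent increments. The bounding paths $A^\ell_i$ and $B^\ell_i$ are built from the \emph{same} dilution field $\{\zeta_\x\}$, so once they are within a distance comparable to the geometric step size of each other, the variables $\underline U^\ell_i$ and $\overline L^\ell_i$ probe overlapping stretches of that field and the increments of $A^\ell_i-B^\ell_i$ become correlated. The paper handles this by introducing the auxiliary stopping time $\tau'_\ell$ --- the first time at most one occupied vertex lies between the walks --- and splitting $\tau_\ell=\tau'_\ell+(\tau_\ell-\tau'_\ell)$. Up to $\tau'_\ell$ the difference \emph{does} couple with an independent-increment walk, and the analogue of Proposition~\ref{Prop:tau_1_vert_inhom_decay} (via Arak's lemma rather than Skorokhod embedding) gives $\P(\tau'_\ell\geq t)\leq \bar C\ell/\sqrt t$. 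Controlling $\tau_\ell-\tau'_\ell$ requires the separate iterated-restart argument of Proposition~\ref{Prop:tau0-tau1-decay-rate-vertical-homo-semi} and Lemma~\ref{Lem:tau_0_is_not_tau_1}: each time the walks come close there is probability at least $\tilde C/(\log\eta)^2$ that $\tau_\ell$ occurs right then, and otherwise the restart gap is only polylogarithmic; iterating this $(\log\ell)^4$ times is what produces the bound on $\tau_\ell-\tau'_\ell$. Your sketch omits this second piece entirely, and neither the Skorokhod embedding nor the optional-stopping argument you propose addresses it.

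As a side remark, even granting Step 1, Steps 2--3 are a detour: plugging $n=\ell^\alpha$ directly into Step 1 already yields $\P(\tau_\ell\geq\ell^\alpha)\leq C_0\ell^{1-\alpha/2}$, which for $\alpha>2$ is strictly sharper than the $\ell^{1/\alpha-1/2}$ you extract through fractional moments.
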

Before we continue, let us collect simplified expressions for simpler situations. 
\begin{remark}[Simplified expressions]
In the spatially homogeneous situation, where $\mu(\x)\equiv1$ and $p(\x)\equiv p$, note that
\[
M_{t}(\x)= \sqrt{\beta_{p}}B_t\qquad \text{ and }\qquad
\vartheta(\x)=\inf\{t\geq 0\colon 1+\sqrt{\beta_{p}}B_{ t}=0 \}
\]
the associated stopping time. Hence, with \(\varrho:=\inf\{t\geq 0\colon 1+B_{t}=0 \}\) and using that $\sqrt{\beta_{p}}B_{t}\xlongequal{ d } B_{\beta_{p}t}$ and thus $\beta_p\vartheta(\x)=\varrho$, we have the following statement. For any $\x\in\ZZe$ and $p\in [0,1]$,
\begin{align}\label{Thm:diluted:convergence-Tl-taul}
\Big( \frac{\tau_\ell(\ell^2\x)}{\ell^2}, \frac{T_\ell(\ell^2\x)}{\ell^3}\Big) \xlongrightarrow{ d } \Big( \frac{\varrho}{\beta_p}, \frac{p}{2\beta_p}\int_{0}^{\varrho} (1+B_{t}) \,\dd t \Big),\qquad \text{ as }\ell\to\infty.
\end{align}
Again, the density for the limiting distribution can be obtained from~\cite[Equation~(12)]{KeMa05}.
In the case mentioned in the introduction, where $p=1$, we have that $\beta_p=2$, which gives the formulation presented there. 
\end{remark}


\subsection{Semi lattices}
\label{Subsec:Continuum_lattice} 
We now consider the case, where nodes are positioned in the space $\mathbb{S} = \mathbb{Z} \times \mathbb{R}$. For this, let $\lambda\colon \mathbb{R}^2 \to (0,\infty)$ be a real-valued, continuous function, which is $\Xi$-Lipschitz continuous, for some $\Xi>0$, in the second argument and $\lambda_{\min} \leq \lambda(\x) \leq \lambda_{\max}$, for all $\x\in \R^2$, for some $0<\lambda_{\min} \leq \lambda_{\max}<\infty$. For $\ell > 0$ and $x_1 \in \mathbb{Z}$, let $\mathcal{P}_{\ell,x_1}$ denote independent $1$-dimensional  Poisson point processes on $\mathbb{R}$ with inhomogeneous intensity measures given by $\lambda_\ell(x_1,x_2)\dd x_2$, where $\lambda_\ell(x_1,x_2)=\lambda(x_1/\ell^2,x_2/\ell^2)$. We define $\mathcal{P}_\ell := \bigcup_{x_1 \in \mathbb{Z}} \{\{x_1\}\times \mathcal{P}_{\ell,x_1}\}$, and consider the points in $\calP_\ell$ as occupied nodes. The role of the dilution is now played by the void spaces in the vertical inhomogeneous Poisson point processes. Again, we draw a directed edge from $\x=(x_1,x_2)\in \calP_\ell$ to $(x_1-1,y)\in \calP_\ell$ if and only if for all other $(x_1-1,z)\in \calP_\ell$, we have that $|x_2-y|<|x_2-z|$. Due to the assumed continuity of $\lambda$, with probability one, there is a unique such point and the choice variables become obsolete. Again, we may include individual amounts of traffic per vertex via a non-negative, real-valued, continuous function $\mu\colon \mathbb{R}^2 \to [0,\infty)$ with $0 \leq \mu(x, y) \leq \mu_{\max} < \infty$ and where $\mu$ is additionally  $\Xi$-Lipschitz continuous in its second argument. Then, the Poisson point $\x \in\calP_\ell$ generates $\mu_\ell(\x)=\mu(\x/\ell^2)$ amount of traffic. As usual, traffic then flows towards the left along directed edges between nodes in $\calP_\ell$ as illustrated in Figure~\ref{Fig:third_model}.
\begin{figure}[t]
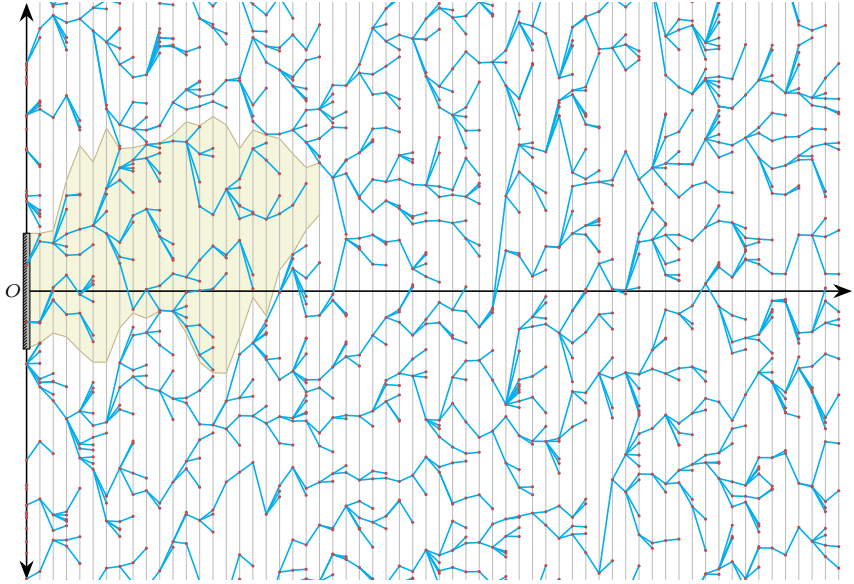

\centering

 \caption{Simulation of traffic flow in the semi-lattice model where the vertical lines are equipped with i.i.d.\ homogeneous Poisson point processes. Again, the beige region corresponds to the set of nodes that transmit their traffic through the slid.}
\label{Fig:third_model}
\end{figure}
As before, we denote by $T_\ell(\x)$ the {\em total amount of traffic} flowing through the slid $I_\ell(\x)$ based on $\lambda_\ell$ and $\mu_\ell$, and let $\tau_\ell(\x)$ be the {\em number of points in the longest path} that carries traffic flowing through $I_\ell(\x)$. 

Define, for $\x=(x_1,x_2)$, the random variables
\[
M_{t}(\x):= \int_{0}^{t} \lambda(x_1+s,x_2)^{-1} \,\dd B_s\qquad \text{ and }\qquad\vartheta(\x):=\inf\{t\geq 0\colon 1+M_{t}(\x)=0\},
\]
corresponding to a Brownian-motion integral with respect to a time change and an associated stopping time. Then, we have the following result. 
\begin{theorem}
\label{Thm:Poisson:convergence-Tl-taul}
For any $\x=(x_1,x_2)\in\mathbb{S}$,
\[
\Big( \frac{\tau_\ell(\ell^2\x)}{\ell^2}, \frac{T_\ell(\ell^2\x)}{\ell^3}\Big) \xlongrightarrow{ d } \Big(  \vartheta(\x), \int_{0}^{\vartheta(\x)} \big(1+ M_{t}(\x)\big) \lambda(x_1+t,x_2)\mu(x_1+t,x_2) \,\dd t \Big), \qquad \text{ as }\ell\to\infty.
\]
\end{theorem}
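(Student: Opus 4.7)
The plan is to reduce the joint convergence to the analysis of a single real-valued Markov chain: the \emph{width} of the ancestral interval of nodes whose drainage path passes through the slid. A martingale functional CLT identifies the rescaled width with a time-changed Brownian motion, and continuity/Riemann-sum arguments then transport this convergence to the hitting time $\tau_\ell$ and the accumulated traffic $T_\ell$.

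First, I would set up the ancestral-interval decomposition. For $n\ge 0$, let $A_n\subset \calP_\ell$ denote the set of nodes in column $\lfloor\ell^2 x_1\rfloor+n$ whose left-directed drainage path meets the slid $I_\ell(\ell^2\x)$. Because the nearest-neighbor navigation preserves vertical order, each $A_n$ is the intersection of $\calP_\ell$ with an interval $\{\lfloor\ell^2 x_1\rfloor+n\}\times[L_n,R_n]$. It is more convenient to track the \emph{Voronoi boundaries} $\widetilde R_n=(R_n+R_n^+)/2$ and $\widetilde L_n=(L_n+L_n^-)/2$, where $R_n^+, L_n^-$ are the neighboring Poisson points just outside $[L_n,R_n]$ in the same column; then $A_{n+1}$ is precisely the set of Poisson points in column $\lfloor\ell^2 x_1\rfloor+n+1$ lying in $[\widetilde L_n,\widetilde R_n]$, and $(\widetilde L_n,\widetilde R_n)$ becomes a Markov chain whose one-step transitions combine (i) an Exp$(\lambda_\ell)$ gap inside the next column and (ii) the distance from $\widetilde R_n$ (resp.\ $\widetilde L_n$) to the nearest Poisson point on the correct side in that column.

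Second, I would compute the infinitesimal characteristics of the chain. A direct calculation using the independence of Poisson processes across columns and Exp$(\lambda_\ell)$ gap laws yields, conditionally on $\widetilde R_n$,
\[
\E[\widetilde R_{n+1}-\widetilde R_n]=o(\lambda_{\min}^{-1}), \qquad \Var(\widetilde R_{n+1}-\widetilde R_n)=\frac{1}{2\lambda_\ell(\lfloor\ell^2 x_1\rfloor+n,\widetilde R_n)^2}+o(\lambda_{\min}^{-2}),
\]
and the symmetric statement for $\widetilde L_n$; the two increments are independent whenever the current width $W_n:=\widetilde R_n-\widetilde L_n$ exceeds a constant multiple of $\lambda_{\min}^{-1}$. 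Rescaling space by $\ell^2$, time by $\ell^2$ and width by $\ell$, and invoking the martingale functional CLT, I would obtain
\[
\bigl(\ell^{-1} W_{\lfloor \ell^2 t\rfloor}\bigr)_{t\ge 0} \xlongrightarrow{d} \bigl(1+M_t(\x)\bigr)_{t\ge 0}
\]
in the Skorohod topology, where the sum of the two independent variances produces the coefficient $\lambda(x_1+t,x_2)^{-2}$ driving the integrand of $M_t(\x)$; the Lipschitz continuity of $\lambda$ in its second argument is used to freeze the spatial argument at $x_2$ since $W_n=O(\ell)$ remains negligible on the macro-scale $\ell^2$.

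Third, $\tau_\ell(\ell^2\x)$ equals, up to an $O(1)$ fluctuation, the first $n$ at which $W_n\le 0$; by the continuous mapping theorem applied to the hitting-time functional of a diffusion with a strictly positive instantaneous variance, $\tau_\ell(\ell^2\x)/\ell^2 \Rightarrow \vartheta(\x)$ jointly with the width convergence. For the traffic I would write
\[
T_\ell(\ell^2\x)=\sum_{n=0}^{\tau_\ell-1}\sum_{\y\in A_n}\mu_\ell(\y),
\]
replace the inner sum by $\lambda_\ell\,\mu_\ell\, W_n$ (with a Poisson-concentration error that is $o(\ell)$ per column), use the continuity of $\lambda,\mu$ and the boundedness $W_n=O(\ell)$ around $\ell^2 x_2$ to substitute $\lambda(x_1+n/\ell^2,x_2)\mu(x_1+n/\ell^2,x_2)$, and finally recognize
\[
\frac{T_\ell(\ell^2\x)}{\ell^3}=\frac{1}{\ell^2}\sum_{n=0}^{\tau_\ell-1}\lambda\mu\,\frac{W_n}{\ell}
\]
as a Riemann approximation to $\int_0^{\vartheta(\x)}(1+M_t(\x))\lambda(x_1+t,x_2)\mu(x_1+t,x_2)\,\dd t$, yielding joint convergence via a standard composition-with-continuous-functionals argument.

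The hard part will be the endgame near extinction: $\widetilde L_n$ and $\widetilde R_n$ cease to decouple once $W_n$ drops to the order $\lambda_{\min}^{-1}$, which is precisely where $\tau_\ell$ is determined. I would handle this by stopping the chain at the first time $W_n$ falls below $\delta\ell$, establishing the FCLT and the traffic/Riemann-sum approximation up to that time (where the Lipschitz freezing of $\lambda,\mu$ is clean), and then controlling the residual contribution to both $\tau_\ell/\ell^2$ and $T_\ell/\ell^3$ by crude deterministic bounds (at most $O(\ell)$ further columns with total traffic $O(\ell\cdot\delta\ell\cdot\lambda_{\max}\mu_{\max})$, i.e.\ $o(\ell^3)$ after $\delta\downarrow 0$); the continuity of $\vartheta$ in the driving Brownian motion then closes the argument.
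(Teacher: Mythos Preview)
Your overall architecture coincides with the paper's: track the boundaries $\widetilde L_n,\widetilde R_n$ of the ancestral interval (the paper calls them $B^\ell_i,A^\ell_i$), establish a functional CLT for the rescaled width $W_n/\ell$, pass the hitting time and the Riemann sum for the traffic through continuous functionals, and isolate the dependent regime near extinction. Your variance computation $\Var(W_{n+1}-W_n)=\lambda^{-2}$ also matches.

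The gap is in your endgame. After stopping at $\sigma_{\delta\ell}:=\inf\{n:W_n\le\delta\ell\}$, you claim the residual is ``at most $O(\ell)$ further columns with total traffic $O(\ell\cdot\delta\ell\cdot\lambda_{\max}\mu_{\max})$''. Neither bound is correct: the residual time $\tau_\ell-\sigma_{\delta\ell}$ is random and of order $(\delta\ell)^2$, not $O(\ell)$, and the width is not confined below $\delta\ell$ after $\sigma_{\delta\ell}$ (it can fluctuate back up to order $\delta\ell$ or higher). So no crude deterministic bound closes the argument; you need a probabilistic tail estimate on the extinction time of the \emph{actual} (dependent-increment) width process started from a small level. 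This is precisely the point where the boundary walks cease to be independent, and your $\delta$-stopping does not sidestep it.

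The paper resolves this differently. It introduces the intermediate stopping time $\tau'_\ell=\inf\{i:\calP_{\ell,i}([B^\ell_i,A^\ell_i])\le 1\}$ and couples the actual boundary walks to independent ones that agree exactly on $\{i<\tau'_\ell\}$; the FCLT and continuous-mapping step (the analogue of your $\delta\downarrow 0$ sandwich) then yields $(\tau'_\ell/\ell^2,T'_\ell/\ell^3)\Rightarrow(\vartheta,\int_0^\vartheta(1+M_t)\lambda\mu\,\dd t)$. The remaining step is to show $\tau_\ell-\tau'_\ell=o_P(\ell^\alpha)$ for any $\alpha>0$, which the paper proves by an iterative argument: each time the interval contains $\le 1$ Poisson point, there is probability at least $c/(\log\ell)^2$ that it contains none, so after $\sim(\log\ell)^4$ visits to the $\le 1$ state the process dies with high probability, and each visit happens within polynomially many columns by an Arak-type random-walk maximum bound. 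That argument, not a deterministic bound, is what replaces your endgame.
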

For the asymptotics of $\tau_\ell(\x)$ we have the following statement, which we prove in the case where only horizontal inhomogeneities are allowed.  
\begin{theorem}
\label{Thm:Poisson:asymp-taul}
Consider the vertically-homogeneous case where $\lambda(x_1,x_2)=\lambda(x_1)$ and $\mu(x_1,x_2)=\mu(x_1)$, for all $(x_1,x_2)\in \mathbb{S}$. Then, there exists $C<\infty$ such that, for any $\alpha > 2$ and all $\ell\ge 1$,
\[
\sup\big\{\P\big(\tau_\ell (\x) \geq \ell\big)\colon \x \in \mathbb{S}\big\} <C \ell^{1/\alpha-1/2}.
\]
\end{theorem}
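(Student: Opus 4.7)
The plan is to reduce the bound on $\tau_\ell(\x)$ to a hitting-time estimate for a square-integrable martingale, paralleling the strategy behind Theorem~\ref{Thm:Pure:asymp-taul} but adapted to the Poisson-column setting. The central object is the random width $W_k$ of the catchment at the $k$-th column to the right of $\x=(x_1,x_2)$, i.e.\ the length of the vertical interval whose nodes' traffic eventually flows through the slid $I_\ell(\x)$. One has $W_0=\ell$ and $\tau_\ell(\x)=\inf\{k\ge 1\colon W_k=0\}$. Crucially, in the vertically-homogeneous case the law of the Poisson process in each column is translation-invariant in $x_2$, so the two envelopes of the catchment evolve as Markov chains and their difference $W$ is, up to a controllable correction, a nonnegative martingale.

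The first step is to identify this martingale/variance structure. A Palm--Mecke calculation for the Poisson point processes $\mathcal{P}_{\ell,x_1+k}$ shows that, conditional on the past, the signed increment of each envelope has mean zero and finite variance comparable to $\lambda(x_1+k)^{-2}$; the zero-mean property stems from the symmetry of the ``nearest point in the next column'' map in a translation-invariant Poisson environment, and the variance estimate uses only $\lambda_{\min}\le\lambda\le\lambda_{\max}$. The second step applies standard hitting-time estimates for $L^2$-bounded martingales---e.g.\ optional stopping on $W_k^2-\sum_{j\le k}\sigma_j^2$ together with a truncation argument---to obtain, for every $p\in(0,1/2)$,
\[
\sup_{\x\in\mathbb{S}}\E[\tau_\ell(\x)^p]\le C_p\,\ell^{2p},
\]
with $C_p$ depending only on $p$, $\lambda_{\min}$, $\lambda_{\max}$, and the Lipschitz constant $\Xi$. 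The third step concludes by Markov's inequality with $p=1/\alpha$ for $\alpha>2$: since $\P(\tau_\ell(\x)\ge g(\ell))\le C_\alpha\,\ell^{2/\alpha}/g(\ell)^{1/\alpha}$, choosing the threshold as in the statement yields the advertised bound of order $\ell^{1/\alpha-1/2}$, uniformly in $\x$ because all constants depend only on the global bounds for $\lambda$.

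The main obstacle lies in the first step. The envelope processes are not genuine random walks with i.i.d.\ increments: the successor of a boundary point in the next column depends both on the entire Poisson realization there and on the current width $W_{k-1}$, and its law becomes qualitatively different once $W_{k-1}$ is of the order of the typical nearest-neighbour distance $1/\lambda$. Establishing that $W$ can be compared with a martingale of controlled step variance therefore requires a delicate Palm-type analysis together with a separate treatment of the ``degenerate'' small-$W$ regime; one expects the latter to contribute only an $O(1)$ additive term to $\tau_\ell$ and thus not to spoil the $\ell^{2p}$ scaling. A secondary difficulty is uniformity in $\x$: the horizontal flattening $\lambda_\ell(\x)=\lambda(\x/\ell^2)$ produces slow variation of the step variance across columns, which must be absorbed into the hitting-time estimate without degrading the exponent $2p$.
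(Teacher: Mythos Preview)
Your overall strategy---treating the catchment width as a (near-)martingale and invoking a hitting-time estimate---matches the paper's, but the proposal leaves the hard part unresolved and the final step does not actually yield the stated exponent.

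The obstacle you identify in your last paragraph is precisely the crux, and the paper's proof does not bypass it: it makes it the organizing principle. The paper introduces the intermediate stopping time $\tau'_\ell$, the first column in which the catchment contains at most one Poisson point. Up to $\tau'_\ell$ the two envelopes are, after an explicit coupling, genuinely independent random walks with independent increments of bounded second and third moments; the paper then applies Arak's maximal inequality (not a moment bound) to obtain directly $\P(\tau'_\ell\ge t)\le \bar C\,\ell\,t^{-1/2}$, uniformly in the starting column. This is sharper than what your route via $\E[\tau^p]\le C_p\ell^{2p}$ and Markov can deliver: your bound gives at best $\P(\tau_\ell\ge t)\le C_p(\ell^2/t)^p$ for $p<1/2$, strictly weaker than $(\ell^2/t)^{1/2}$. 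Indeed, plugging $t=\ell$ and $p=1/\alpha$ into your own formula produces $C_\alpha\,\ell^{2/\alpha-1/\alpha}=C_\alpha\,\ell^{1/\alpha}$, not $\ell^{1/\alpha-1/2}$; the missing factor $\ell^{-1/2}$ is exactly what the direct tail bound supplies and what the moment method cannot recover.

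For the degenerate small-$W$ regime your proposal offers only the expectation that it contributes $O(1)$; the paper instead proves a quantitative bound $\P(\tau_\ell-\tau'_\ell>\ell^\alpha)\le \bar{\mathsf C}\,\ell^{-\alpha/2}(\log\ell)^8$ via a restart argument: each time the catchment shrinks to $\le 1$ point there is probability at least $\tilde C/(\log\eta)^2$ of immediate coalescence (Lemma~\ref{Lem:tau_0_is_not_tau_1}), and iterating this over $\lfloor\log\ell\rfloor^4$ restarts yields the required decay. The ``delicate Palm-type analysis together with a separate treatment'' you defer is not a technical afterthought but carries the entire content of Proposition~\ref{Prop:tau0-tau1-decay-rate-vertical-homo-semi}; without it the argument is incomplete.
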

As in the diluted-lattice case, we next present some alternative and simplified expressions for the limiting random variables. 
\begin{remark}[Alternative and simplified expressions]
\label{Rem:simplified-expressions-diluted-lattice}
By the Dambis--Dubins--Schwarz Theorem (see~\cite[Chapter~V, Theorems~1.6 and~1.7]{RevYor99}), the process $\{M_t(\x)\colon t\geq0\}$ can be represented as a time-changed Brownian motion:  
$M_t(\x) = B_{\Lambda_{\x}(t)}$,
where  
\[
\Lambda_{\x}(t) := \int_0^t \lambda(x_1+s, x_2)^{-2} \, \dd s.
\]  
Using this, we can express the statement of  Theorem~\ref{Thm:Poisson:convergence-Tl-taul} alternatively as   
\[
\Big( \frac{\tau_\ell(\ell^2\x)}{\ell^2}, \frac{T_\ell(\ell^2\x)}{\ell^3}\Big) \xlongrightarrow{ d } \Big( \Lambda_{\x}^{-1} (\varrho), \int_{0}^{\varrho} (1+ B_{t}) \lambda(x_1+\Lambda_{\x}^{-1}(t),x_2)^3\mu(x_1+\Lambda_{\x}^{-1}(t),x_2) \,\dd t \Big),\qquad\text{ as }\ell\to\infty,
\]
where we recall that $\varrho=\inf\{t\geq 0\colon 1+B_{t}=0 \}$.
In particular, when $\mu(\x)=\lambda(\x)^{-3}$, Theorem~\ref{Thm:Poisson:convergence-Tl-taul} says that
\[
\Big( \frac{\tau_\ell(\ell^2\x)}{\ell^2}, \frac{T_\ell(\ell^2\x)}{\ell^3}\Big) \xlongrightarrow{ d } \Big( \Lambda_{\x}^{-1} (\varrho), \int_{0}^{\varrho}(1+ B_{ t}) \,\dd t \Big),\qquad\text{ as }\ell\to\infty.
\]
In the spatially homogeneous case, when $\lambda(\x)\equiv \lambda$ and $\mu(\x)\equiv 1$, Theorem~\ref{Thm:Poisson:convergence-Tl-taul} simplifies to
\begin{align}\label{Thm:SemiHom}
\Big( \frac{\tau_\ell(\ell^2\x)}{\ell^2}, \frac{T_\ell(\ell^2\x)}{\ell^3}\Big) \xlongrightarrow{ d } \Big( \lambda^2 \varrho, \lambda^3\int_{0}^{\varrho}(1+ B_{ t}) \,\dd t \Big),\qquad\text{ as }\ell\to\infty.
\end{align}
In the case where $\lambda(\x)=\frac{1}{x_1+x_2}$ and $\mu(\x)\equiv 1$, Theorem~\ref{Thm:Poisson:convergence-Tl-taul} implies that
\[
\Big( \frac{\tau_\ell(\ell^2\x)}{\ell^2}, \frac{T_\ell(\ell^2\x)}{\ell^3}\Big) \xlongrightarrow{ d }  \Big( (3\varrho)^{1/3}, \int_{0}^{\varrho}\frac{1+ B_{ t}}{3t} \,\dd t \Big),\qquad \text{ as }\ell\to\infty,
\]
and, when $\lambda(\x)=\exp(-x_1-x_2)$ and $\mu(\x)\equiv 1$, Theorem~\ref{Thm:Poisson:convergence-Tl-taul} implies 
\[
\Big( \frac{\tau_\ell(\ell^2\x)}{\ell^2}, \frac{T_\ell(\ell^2\x)}{\ell^3}\Big) \xlongrightarrow{ d }  \Big( \frac{1}{2}\log(1+2\varrho), \int_{0}^{\varrho}\frac{1+ B_{ t}}{(1+2t)^{3/2}} \,\dd t \Big), \qquad \text{ as }\ell\to\infty.
\]
\end{remark}

In the next section, we present the main steps of the proof in case of the semi lattice without vertical inhomogeneities. Transfer results towards the general case and in particular the lattice $\ZZe$ are presented in Section~\ref{Sec:Proofs}. 


\section{Proof strategy}\label{Sec:Proof_Strategy}
In all what follows, we only consider the case where $\x=\o$ is the origin. The case of general $\x$ follows without effort. The key idea is to track some upper and lower-most paths in the traffic network, starting in the growing slid, backwards in time, and indicated in brown in Figure~\ref{Fig:third_model}. Up to a first stopping time, they behave like independent random walks and the occupied area between them converges under our scaling to the area under a Brownian motion. The analysis is complicated by the fact that this first stopping time has to be identified, in the limit, with the stopping time where the paths actually meet for the first time. 

As mentioned, we focus now on the semi-lattice case with only horizontal inhomogeneities, i.e., where $\lambda(x_1,x_2)=\lambda(x_1)$ and $\mu(x_1,x_2)=\mu(x_1)$. 
We start by defining the bounding random walks. 


\subsection{Bounding random walks on $\mathbb{S}$ without vertical inhomogeneities}
Let $\x=\o$. Then, at initial time we set $A_0^\ell := \ell/2$ and $B_0^\ell := -\ell/2$ and define recursively, for each $i \geq 0$,
\begin{align*}
    \overline{U}^\ell_i &:= \inf\{y > A^\ell_i \colon y \in {\calP}_{\ell,i} \}, &
    \overline{L}^\ell_i &:= \inf\{y \geq B^\ell_i \colon y \in {\calP}_{\ell,i} \}, \\
    \underline{U}^\ell_i &:= \sup\{y \leq A^\ell_i \colon y \in {\calP}_{\ell,i} \}, &
    \underline{L}^\ell_i &:= \sup\{y < B^\ell_i \colon y \in {\calP}_{\ell,i} \}, \\
    A^\ell_{i+1} &:= ( \overline{U}^\ell_i +\underline{U}^\ell_i )/2, &
    B^\ell_{i+1} &:= ( \overline{L}^\ell_i + \underline{L}^\ell_i )/2.
\end{align*}
We also define
\[
    \overline{X}^\ell_i := \overline{U}^\ell_i- A^\ell_i, \quad \underline{X}^\ell_i :=  A^\ell_i - \underline{U}^\ell_i, \qquad \text{ and }\qquad \overline{Y}^\ell_i := \overline{L}^\ell_i-  B^\ell_i , \quad \underline{Y}^\ell_i := B^\ell_i - \underline{L}^\ell_i
\]
and note that, with these definitions, we have
\[
    A^\ell_{i+1}-A^\ell_i= (\overline{X}^\ell_i-\underline{X}^\ell_i)/2
    \qquad\text{ and }\qquad
    B^\ell_{i+1}-B^\ell_i= (\overline{Y}^\ell_i-\underline{Y}^\ell_i)/2.
\]
With these definitions, the sequences $\{A^\ell_{i+1} - A^\ell_i\colon i \geq 0\}$ and $\{B^\ell_{i+1} - B^\ell_i\colon i \geq 0\}$ consist of independent random variables. Moreover, note that, for each $i \geq 0$, the variables $\overline{X}^\ell_i$, $\underline{X}^\ell_i$, $\overline{Y}^\ell_i$, and $\underline{Y}^\ell_i$ are  $\Exp(\lambda(i/\ell^2 ))$ random variables.

Next, denote by $\calF_0$ the trivial $\sigma$-algebra and, for each $i \geq 1$, let $\calF_i$ be the $\sigma$-algebra generated by $\{\calP_{\ell, k}\colon 0 \leq k \leq i-1\}$. Then, 
\[
   \tau_{\ell} = \inf \big\{ i \geq 0 \colon \calP_{\ell, i}([B^\ell_i, A^\ell_i]) = 0 \big\}\qquad\text{and}\qquad
   \tau'_{\ell}:= \inf \big\{ i \geq 0 \colon \calP_{\ell, i}([B^\ell_i, A^\ell_i]) \leq 1 \big\},
\]
are stopping times with respect to this filtration. Here, we wrote $\calP(A):=\#( A \cap \calP)$ for the number of points of $\calP$ in a set $A$. Recall that $\tau_\ell$ is the length of the longest path, one of our two key quantities of interest, and that traffic generated at any point below the levels $\{A^\ell_i\}$ will pass through the segment $\{0\} \times (-\infty, \ell]$, while traffic generated at any point above the levels $\{B^\ell_i\}$ will pass through $\{0\} \times [-\ell, \infty)$. Consequently, only traffic generated at Poisson points in
\[
\bigcup_{i \geq 0} \big( \{i\} \times [B^\ell_i, A^\ell_i] \big)
\]
will pass through the slit $I_\ell=I_\ell(\o)$. Furthermore, note that for any $i > \tau_{\ell}$, we have $A^\ell_i = B^\ell_i$. 
Therefore,
\[
T_{\ell}= \sum_{i=0}^{\tau_{\ell} -1} \calP_{\ell, i}([B_i^{\ell}, A_i^{\ell}])\mu(i/\ell^2)\qquad\text{and}\qquad T'_{\ell}:= \sum_{i=0}^{\tau'_{\ell} -1} \calP_{\ell, i}([B_i^{\ell}, A_i^{\ell}])\mu(i/\ell^2)
\]
represent the total traffic accumulated up to the times $\tau_{\ell}$ and $\tau'_{\ell}$, respectively.

Note that, the bounding random walks behave independently only up to time $\tau'_{\ell}$. Consequently, we split the analysis into a part prior to $\tau'_{\ell}$ and a part that deals with what happens between $\tau'_{\ell}$ and $\tau_{\ell}$. 


\subsection{Up to time $\tau'_{\ell}$: bulk contribution}\label{Sec:prior_time_1}
Recall the global bounds $\lambda_{\max}, \mu_{\max}<\infty$. The following result removes the randomness associated to Poisson points in the area between the bounding random walks. 
\begin{prop}
For any $\epsilon, \ell , t>0$, we have that
\[
 \ell^3 \P\Big(   \sup_{0\leq k \leq \lfloor \ell^2 t \rfloor } \frac{1}{\ell^3} 
    \Big | \sum_{i=0}^{k} \calP_{\ell, i}([B_i^{\ell}, A_i^{\ell}])\mu(i/\ell^2)
    - \sum_{i = 0}^{k} \big(A^{\ell}_i- B^{\ell}_i\big) \lambda(i/\ell^2) \mu(i/\ell^2)\Big|>\epsilon\Big)\leq t\epsilon^{-2}\lambda_{\max}\mu_{\max}^2.
\]
\label{Prop:Error_in_area_approximation-homogeneous}
\end{prop}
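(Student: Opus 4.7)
The plan is to rewrite the quantity inside the supremum as the partial sums of a martingale difference sequence and invoke Kolmogorov's maximal inequality. Setting
\[
D_i := \calP_{\ell,i}([B_i^\ell, A_i^\ell])\mu(i/\ell^2) - (A_i^\ell - B_i^\ell)\lambda(i/\ell^2)\mu(i/\ell^2), \qquad M_k := \sum_{i=0}^k D_i,
\]
the key structural observation is that $A_i^\ell$ and $B_i^\ell$ are $\calF_i$-measurable, since they are built only from $\calP_{\ell,0},\ldots,\calP_{\ell,i-1}$, while $\calP_{\ell,i}$ is independent of $\calF_i$. Hence, conditional on $\calF_i$, the count $\calP_{\ell,i}([B_i^\ell, A_i^\ell])$ is Poisson with parameter $(A_i^\ell - B_i^\ell)\lambda(i/\ell^2)$, using that the intensity is vertically homogeneous. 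This will give both $\E[D_i \mid \calF_i] = 0$ and, using that the Poisson variance equals its mean,
\[
\E[D_i^2 \mid \calF_i] = \mu(i/\ell^2)^2 \lambda(i/\ell^2)(A_i^\ell - B_i^\ell) \le \mu_{\max}^2 \lambda_{\max}(A_i^\ell - B_i^\ell).
\]

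The second step is to control $\E[A_i^\ell - B_i^\ell]$. By the memoryless property of the Poisson process, $\overline X_i^\ell$ and $\underline X_i^\ell$ are, conditionally on $\calF_i$, i.i.d.\ $\Exp(\lambda(i/\ell^2))$ random variables, so the increment $A_{i+1}^\ell - A_i^\ell = (\overline X_i^\ell - \underline X_i^\ell)/2$ has conditional mean zero and $\{A_i^\ell\}$ is a martingale, and analogously for $\{B_i^\ell\}$. This also holds after the walks coalesce (for $i > \tau_\ell$, where $A_i^\ell = B_i^\ell$ and both terms in $D_i$ vanish almost surely), so $\E[A_i^\ell - B_i^\ell] = A_0^\ell - B_0^\ell = \ell$ for every $i \ge 0$. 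Orthogonality of martingale increments then yields, with $N := \lfloor \ell^2 t \rfloor$,
\[
\E[M_N^2] = \sum_{i=0}^N \E[D_i^2] \le (N+1)\,\ell\,\mu_{\max}^2\lambda_{\max} \le \ell^3 t\,\mu_{\max}^2\lambda_{\max},
\]
absorbing the boundary term into the leading estimate.

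The final step is Kolmogorov's maximal inequality applied to the square-integrable martingale $\{M_k\}_{k\le N}$:
\[
\P\Big(\max_{0 \le k \le N}|M_k| > \ell^3 \epsilon\Big) \le \frac{\E[M_N^2]}{\ell^6 \epsilon^2} \le \frac{t\,\mu_{\max}^2 \lambda_{\max}}{\ell^3 \epsilon^2},
\]
and multiplying through by $\ell^3$ produces exactly the claimed bound. I expect the only truly delicate point to be the Poissonization identity $\E[\calP_{\ell,i}([B_i^\ell, A_i^\ell])\mid \calF_i] = (A_i^\ell - B_i^\ell)\lambda(i/\ell^2)$, which requires a clean formulation of the filtration and the independence of $\calP_{\ell,i}$ from $\calF_i$; everything else is then a textbook $L^2$-martingale bookkeeping.
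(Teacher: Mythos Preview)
Your proposal is correct and follows essentially the same approach as the paper: define the martingale of centered Poisson counts, use that the conditional variance equals the conditional mean $(A_i^\ell - B_i^\ell)\lambda(i/\ell^2)\mu(i/\ell^2)^2$, invoke $\E[A_i^\ell - B_i^\ell] = \ell$, and apply Doob's (equivalently Kolmogorov's) maximal inequality. The paper's argument is identical up to notation and indexing, so there is nothing further to add.
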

We present the proof in Section~\ref{Sec:proof_prior_time_1}. 
Now, to analyze the sum
\[
\sum_{i = 0}^{k} (A^{\ell}_i- B^{\ell}_i) \lambda(i/\ell^2) \mu(i/\ell^2),
\]
we couple the processes $\{A^{\ell}_i\colon i \geq 0\}$ and $\{B^{\ell}_i\colon i \geq 0\}$ with two independent processes $\{\hat{A}^{\ell}_i\colon i \geq 0\}$ and $\{\hat{B}^{\ell}_i\colon i \geq 0\}$ as follows. Let $\{\hat{\overline{X}}_i\colon i \geq 0\}$, $\{\hat{\underline{X}}^\ell_i\colon i \geq 0\}$, $\{\hat{\overline{Y}}^\ell_i\colon i \geq 0\}$, and $\{\hat{\underline{Y}}^\ell_i\colon i \geq 0\}$ be four independent sequences of independent random variables. For all $i \geq 0$, the random variables $\hat{\overline{X}}^\ell_i$, $\hat{\underline{X}}^\ell_i$, $\hat{\overline{Y}}^\ell_i$, and $\hat{\underline{Y}}^\ell_i$ are i.i.d.\ $\Exp(\lambda(i/\ell^2, 0))$.
Setting $\hat A^{\ell}_0 = \ell/2$ and $\hat B^{\ell}_0 = -\ell/2$, we define the processes recursively by
\[
    \hat A^\ell_{i+1} := \hat A^\ell_i + ( \hat{\overline{X}}^\ell_i-\hat{\underline{X}}^\ell_i)/2
    \qquad\text{ and }\qquad
    \hat B^\ell_{i+1} := \hat B^\ell_i + (\hat{\overline{Y}}^\ell_i-\hat{\underline{Y}}^\ell_i)/2.
\]
We further define 
\[
    \hat \tau_\ell := \inf \Big\{ i \geq 0 \colon \hat A^\ell_i - \hat{\underline{X}}^\ell_i \leq \hat B^\ell_i + \hat{\overline{Y}}^\ell_i \Big\},
\]
and note that
\begin{align}
    &\Big(\hat \tau_\ell , \Big\{\Big(\hat A^\ell_{i+1}, \hat B^\ell_{i+1}, \hat{\overline{X}}^\ell_i, \hat{\underline{X}}^\ell_i, \hat{\overline{Y}}^\ell_i, \hat{\underline{Y}}^\ell_i\Big)\ind{i< \hat\tau_\ell }  \Big\}_{i\geq 0} \Big)
    \xlongequal{ d } 
    \Big(\tau'_{\ell}, \Big\{\Big( A^\ell_{i+1},  B^\ell_{i+1}, {\overline{X}}^\ell_i, {\underline{X}}^\ell_i, {\overline{Y}}^\ell_i, {\underline{Y}}^\ell_i\Big)\ind{i< \tau'_{\ell} }  \Big\}_{i\geq 0}  \Big).
    \label{Equ:Coupling-homogeneous-to-independent}
\end{align}
This gives us that
\begin{align}
    &\Big(\tau'_{\ell}, \sum_{i = 0}^{\tau'_{\ell}-1} (A^{\ell}_i- B^{\ell}_i) \lambda(i/\ell^2) \mu(i/\ell^2) \Big)\xlongequal{ d }
   \Big(\hat \tau_\ell, \sum_{i = 0}^{\hat \tau_\ell-1} (\hat A^{\ell}_i- \hat B^{\ell}_i) \lambda(i/\ell^2) \mu(i/\ell^2) \Big) 
   = \Big(\hat \tau_\ell, \sum_{i = 0}^{\hat \tau_\ell-1}  D^{\ell}_i \lambda(i/\ell^2) \mu(i/\ell^2) \Big),
\end{align}
where $D^\ell_i:= \hat A^\ell_i-\hat B^\ell_i$ for all $i\geq0$. Observe that $\{D^\ell_{i+1}-D^\ell_i\colon i\geq 0\}$ is a sequence of independent random variables with  $\E[D^\ell_{i+1}-D^\ell_{i}]=0$ and 
\[
    \Var[D^\ell_{i+1}-D^\ell_{i}]=2\Var[\hat A^\ell_{i+1}-\hat A^\ell_{i}] = 2^{-1}\Var[\hat{\overline{X}}^\ell_i-\hat{\underline{X}}^\ell_i]  =\lambda(i/\ell^2)^{-2}.
\]
Since $D^\ell_{i+1}-D^\ell_i=(\hat{\overline{X}}^\ell_i+\hat{\underline{Y}}^\ell_i-\hat{\underline{X}}^\ell_i-\hat{\overline{Y}}^\ell_i)/2 $, its density is given by 
\[
f_{D^\ell_{i+1}-D^\ell_{i}}(x) = \lambda(i/\ell^2)\big(|x|\lambda(i/\ell^2)+1/2\big) \e^{-2|x|\lambda(i/\ell^2) }, \qquad \text{ for $x\in\R$.}
\]
For all $i\geq 1$, we define
\begin{align}
   V_{i}:= \lambda((i-1)/\ell^2)(D^\ell_{i}-D^\ell_{i-1})
   \label{Equ:Def-V}
\end{align}
and note that $\E[V_i]=0$, $\Var[V_i]=1$ and that its density is given by
\[
f_{V_{i}}(x)= (|x|+1/2)\e^{-2|x|},\qquad\text{ for $x\in\R$.}
\]
We now define a sequences of processes $\{W_\ell\colon \ell\geq1\}$ as $W_\ell=\{W_{\ell,t}\colon t\geq0\}$, where
\begin{align}
     W_{\ell,t}:=\ell^{-1}\big( D^\ell_{\lfloor \ell^2t \rfloor} +( \ell^2t- \lfloor \ell^2t \rfloor) ( D^\ell_{\lfloor \ell^2t \rfloor+1} - D^\ell_{\lfloor \ell^2t \rfloor}) \big).
     \label{Equ:Def-Wl}
\end{align}
Note that the $W_\ell$ take values in the measurable space $(\calC[0,\infty), \mathscr{B}( \calC[0,\infty) ))$, where $\calC[0,\infty)$ is the space of all continuous real-valued functions on $[0,\infty)$. We equip $\calC[0,\infty)$ with the metric 
\[
\rho(f,g):= \sum_{k\ge 1} 2^{-k} \max_{0\leq t \leq k} (|f(t)-g(t)|\wedge 1)
\]
and the Borel $\sigma$-algebra $\mathscr{B}( \calC[0,\infty) )$ on $\calC[0,\infty)$  is generated by the open sets defined with respect to the metric $\rho$. The next result establishes a function central-limit theorem for the process of differences. 
\begin{lemma} 
\label{Lem:Wlt-weak-convergence}
As $\ell \to \infty$, the process $W_\ell=\{W_{\ell,t} \colon t \geq 0\}$ converges weakly to 
    \[
    1+M_{\cdot}=\{1+M_t \colon t\geq 0\}:=
    \Big\{1+\int_{0}^t \lambda(s)^{-1}\,\dd B_s\colon t \geq 0\Big\}.
    \]
\end{lemma}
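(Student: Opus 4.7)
The plan is to represent $W_\ell$ as a discrete martingale plus an interpolation remainder, then invoke a functional martingale central limit theorem. First, I would use the recursive definition together with $D^\ell_0=\ell$ to obtain the telescoping representation
\[
D^\ell_k = \ell + \sum_{i=1}^k \lambda((i-1)/\ell^2)^{-1}\, V_i,
\]
so that $W_{\ell,t} = 1 + M_{\ell,t} + R_{\ell,t}$, where the main part is
\[
M_{\ell,t} := \ell^{-1}\sum_{i=1}^{\lfloor \ell^2 t\rfloor}\lambda((i-1)/\ell^2)^{-1}\,V_i,
\]
and the remainder is the piecewise-linear interpolation correction $R_{\ell,t}:=\ell^{-1}(\ell^2 t-\lfloor \ell^2 t\rfloor)\lambda(\lfloor \ell^2 t\rfloor/\ell^2)^{-1}\,V_{\lfloor \ell^2 t\rfloor+1}$.

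The next step is to show the remainder is uniformly negligible on compact time intervals. Since the $V_i$ are i.i.d.\ with density $(|x|+1/2)\e^{-2|x|}$ and hence have exponential tails, a union bound gives, for every $T>0$ and suitable constants $C,c>0$,
\[
\P\bigl(\max_{1\le i\le \lceil \ell^2 T\rceil+1}|V_i|>\ell^{1/2}\bigr) \le C\ell^2 T \,\e^{-c\ell^{1/2}}\to 0,
\]
whence $\sup_{t\in[0,T]}|R_{\ell,t}|\to 0$ in probability, using the uniform lower bound $\lambda_{\min}>0$ to control the weight.

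For the main term, set $\xi_i^\ell := \ell^{-1}\lambda((i-1)/\ell^2)^{-1}V_i$ with the filtration $\mathcal{G}_k^\ell := \sigma(V_1,\ldots,V_k)$, so that $\{M_{\ell,t}\}$ is a piecewise-constant square-integrable martingale. Its predictable quadratic variation satisfies
\[
\langle M_\ell\rangle_t=\ell^{-2}\sum_{i=1}^{\lfloor \ell^2 t\rfloor}\lambda((i-1)/\ell^2)^{-2}\longrightarrow \int_0^t \lambda(s)^{-2}\,\dd s
\]
uniformly on compacts by continuity of $\lambda$ together with Riemann sum convergence, while the Lindeberg condition is immediate from the bound $|\xi_i^\ell|\le \ell^{-1}\lambda_{\min}^{-1}|V_i|$ and the exponential tails of $V_i$. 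A standard functional martingale central limit theorem then yields $M_\ell\Rightarrow M$ in the Skorokhod space $\calD[0,\infty)$, where $M$ is the continuous Gaussian martingale with quadratic variation $\int_0^\cdot\lambda(s)^{-2}\,\dd s$; equivalently $M_t=\int_0^t\lambda(s)^{-1}\,\dd B_s$ for a standard Brownian motion $\{B_s\colon s\geq 0\}$. Since this limit is continuous and the remainder $R_\ell$ vanishes uniformly on compacts in probability, the convergence upgrades to weak convergence of $W_\ell$ to $1+M_\cdot$ in $\calC[0,\infty)$. The main conceptual work is recasting the time-changed It\^o integral as the scaling limit of a weighted random walk; the ingredients themselves---Riemann sum convergence of the quadratic variation and the Lindeberg bound---are routine given the assumed continuity and positive lower bound of $\lambda$ together with the exponential tails of $V_i$.
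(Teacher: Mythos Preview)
Your proof is correct and takes a genuinely different route from the paper. The paper proceeds by an approximation argument: it introduces piecewise-constant approximations $1/\lambda^{(m)}$ of $1/\lambda$, defines the corresponding processes $W^{(m)}_\ell$, applies the classical Donsker theorem to each $W^{(m)}_\ell$ (since the $V_i$ are i.i.d.), uses the It\^o isometry to pass to the limit in $m$, and finally closes the triangle via Billingsley's Theorem~3.2 together with Kolmogorov's maximal inequality to control $\sup_{t\le K}|W_{\ell,t}-W^{(m)}_{\ell,t}|$. You instead go straight to a functional martingale CLT for the weighted partial sums $M_{\ell,t}=\ell^{-1}\sum_{i\le\lfloor\ell^2 t\rfloor}\lambda((i-1)/\ell^2)^{-1}V_i$, checking convergence of the (here deterministic) predictable quadratic variation as a Riemann sum and the Lindeberg condition from the exponential tails, then disposing of the interpolation remainder separately. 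Your approach is more direct---one invocation of a slightly heavier off-the-shelf theorem replaces a three-step approximation---and would extend without change if the $V_i$ were merely a martingale-difference array; the paper's approach trades that generality for relying only on Donsker's theorem and explicitly exploits that the $V_i$ are i.i.d.
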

We present the proof in Section~\ref{Sec:proof_prior_time_1}. 
Next, for a function $f\in \calC[0,\infty)$ and $a\in\R$, we define
  \[
  \calH_a(f):= \inf\{ t\geq 0\colon f(t)=a\}
  \qquad\text{ and }\qquad
  \calI_a(f):= \int_{0}^{\calH_a(f)} f(t)\lambda(t)\mu(t)\,\dd t.
  \]
\begin{remark}
\label{Rem:Continuity-of-stopping-time-vert-homo-semi}
 Note that, for any $a \in \mathbb{R}$, the functional \(\calH_a\) is continuous on a set of Wiener measure one (see~\cite[Appendix~M15]{Billingsley99}). The same argument shows that $\mathcal{H}_a$ is almost-surely continuous at $1 + M_{\cdot} = 1 + B_{\Lambda(\cdot)}$, because \(\Lambda\) is a strictly increasing continuous function \([0, \infty) \to [0, \infty)\). Here, as mentioned in Remark~\ref{Rem:simplified-expressions-diluted-lattice}, we use that, by the Dambis--Dubins--Schwarz theorem, $M_{\cdot}$ can be represented as a time-changed Brownian motion $M_{\cdot} = B_{\Lambda(\cdot)}$, where
\[
\Lambda(t) := \int_0^t \lambda(s)^{-2}\, \dd s.
\]  
Moreover, since the map $(\alpha, g) \mapsto (\alpha, \int_{0}^{\alpha} g(t)\, \dd t)$ is continuous from $\R \times \mathcal{C}[0, \infty)$ to $\R^2$,  it follows that the jointly defined functional $(\mathcal{H}_a, \mathcal{I}_a)$ is almost-surely continuous at $1 + M_{\cdot}$.
\end{remark}
  
Lemma~\ref{Lem:Wlt-weak-convergence} and Remark~\ref{Rem:Continuity-of-stopping-time-vert-homo-semi} together allow us to establish the following result, with the full proof given in Section~\ref{Sec:proof_prior_time_1}. 
\begin{lemma}
\label{Lem:Wlt-integration-weak-convergence}
   We have that,
    \[
    \Big(\hat{\tau}_\ell/\ell^2, 
    \int_{0}^{\hat{\tau}_\ell/\ell^2} W_{\ell,t}\lambda(t)\mu(t) \,\dd t \Big) \xlongrightarrow{ d }
\Big(\vartheta,\int_{0}^{\vartheta} \big(1+ M_{t}\big) \lambda(t)\mu(t) \,\dd t \Big),\qquad \text{as }\ell\to\infty.
    \]
\end{lemma}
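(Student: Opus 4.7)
The plan is to combine Lemma~\ref{Lem:Wlt-weak-convergence} with the continuous mapping theorem, applied to the jointly continuous functional
\[
\Psi\colon\calC[0,\infty)\times[0,\infty)\longrightarrow[0,\infty)^2,\qquad
\Psi(f,a):=\Big(a,\int_0^a f(t)\lambda(t)\mu(t)\,\dd t\Big),
\]
which is continuous in the product topology because $\lambda$ and $\mu$ are continuous and bounded on compacts. Given $W_\ell\xlongrightarrow{d}1+M_{\cdot}$ from Lemma~\ref{Lem:Wlt-weak-convergence}, it therefore suffices to upgrade this to the joint weak convergence
\[
(W_\ell,\hat\tau_\ell/\ell^2)\xlongrightarrow{d}(1+M_{\cdot},\vartheta)
\]
in $\calC[0,\infty)\times[0,\infty)$, after which an application of $\Psi$ delivers the claim. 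The crux is thus to identify $\hat\tau_\ell/\ell^2$, in the limit, with the hitting time $\calH_0(W_\ell)$ of the level zero by the continuous interpolant $W_\ell$.

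To this end, I would sandwich $\hat\tau_\ell$ between two hitting times of the random walk $\{D^\ell_i\}_{i\ge 0}$. Fix a large $T>0$ (to be sent to infinity later), and set $M^T_\ell:=\max_{0\le i\le\lfloor T\ell^2\rfloor}(\hat{\overline{Y}}^\ell_i+\hat{\underline{X}}^\ell_i)$. The pointwise inclusions
\[
\{D^\ell_i\le 0\}\ \subset\ \{D^\ell_i\le\hat{\overline{Y}}^\ell_i+\hat{\underline{X}}^\ell_i\}\ \subset\ \{D^\ell_i\le M^T_\ell\},\qquad 0\le i\le\lfloor T\ell^2\rfloor,
\]
translate into the bracket
\[
\inf\{i\ge 0\colon D^\ell_i\le M^T_\ell\}\ \le\ \hat\tau_\ell\ \le\ \inf\{i\ge 0\colon D^\ell_i\le 0\}
\]
on the event $\{\hat\tau_\ell\le T\ell^2\}$. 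Rewriting the outer hitting times through $W_\ell$ via $W_{\ell,i/\ell^2}=D^\ell_i/\ell$ and the linearity of the interpolation then yields, with $\calH_a(f):=\inf\{t\ge 0\colon f(t)\le a\}$ (which agrees with the definition used in Remark~\ref{Rem:Continuity-of-stopping-time-vert-homo-semi} for continuous paths starting above the level $a$),
\[
\calH_{M^T_\ell/\ell}(W_\ell)\ \le\ \hat\tau_\ell/\ell^2\ \le\ \calH_0(W_\ell)+\ell^{-2}.
\]
Because $\lambda\ge\lambda_{\min}$, a routine union bound over exponential tails shows that $M^T_\ell\le(\log\ell)^2$ with probability tending to one, so that $M^T_\ell/\ell\to 0$ in probability.

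By Skorohod's representation theorem applied to Lemma~\ref{Lem:Wlt-weak-convergence}, one may assume that $W_\ell\to 1+M_{\cdot}$ almost surely in $\calC[0,\infty)$. Since $1+M_{\cdot}=1+B_{\Lambda(\cdot)}$ crosses the level zero transversally at $\vartheta$ almost surely, the map $a\mapsto\calH_a$ is a.s.\ continuous at $1+M_{\cdot}$ for $a$ in a neighborhood of $0$, and $\calH_a(1+M_{\cdot})\uparrow\vartheta$ as $a\downarrow 0$; combined with $M^T_\ell/\ell\to 0$, both bounds in the sandwich converge to $\vartheta$, whence $\hat\tau_\ell/\ell^2\to\vartheta$ almost surely jointly with $W_\ell\to 1+M_{\cdot}$. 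The restriction $\{\hat\tau_\ell\le T\ell^2\}$ is then removed by sending $T\to\infty$, using the almost-sure finiteness of $\vartheta$. The main obstacle is precisely this sandwich step, since it requires handling, simultaneously and without losing the joint convergence needed to invoke $\Psi$, the uniform control of the exponential maxima $M^T_\ell$, the path-wise continuity of $\calH_a$ at the limit process, and the transition from a deterministic horizon $T\ell^2$ to the random stopping time $\hat\tau_\ell$; the continuous-mapping finish through $\Psi$ is then routine.
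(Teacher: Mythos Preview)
Your approach is essentially the same as the paper's: both sandwich $\hat\tau_\ell/\ell^2$ between $\calH_0(W_\ell)$ and a hitting time at a small positive level controlled by the maximum of the exponential gaps, kill the bad event by a union bound over exponential tails, and close the sandwich via right-continuity of $a\mapsto\calH_a(1+M_\cdot)$ at $a=0$. The paper carries this out directly on joint distribution functions using the functionals $(\calH_a,\calI_a)$ with a fixed deterministic level $\alpha$ (sent to $0$ at the very end), whereas you package the same argument through Skorohod representation and the continuous map $\Psi$; this is cosmetic.

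Two minor points. First, the phrase ``crosses the level zero transversally'' is misleading: Brownian paths do not cross levels transversally, and you do not need this. What you actually use---and what the paper states explicitly---is that $a\mapsto\calH_a(f)$ is right-continuous at $0$ for any $f\in\calC[0,\infty)$ with $f(0)>0$, together with the almost-sure continuity of $\calH_a$ at $1+M_\cdot$ from Remark~\ref{Rem:Continuity-of-stopping-time-vert-homo-semi}. Second, your Skorohod step only gives $W_\ell\to 1+M_\cdot$ almost surely, while $M^T_\ell/\ell\to 0$ is asserted in probability on the original space; to combine them cleanly you should either Skorohod-represent the pair $(W_\ell,M^T_\ell/\ell)\xrightarrow{d}(1+M_\cdot,0)$ (joint convergence holds because one marginal is a constant), or---as the paper does---bypass Skorohod entirely and run the liminf/limsup argument on distribution functions with a fixed level $\alpha$, taking $\alpha\downarrow 0$ at the end.
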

Now, we have all the tools to present the following key result for the bulk term. 
\begin{prop}
\label{Prop:Conv-in-dist-upto-tau1-vert-homo}
   We have that,
\[
\Big( \tau'_{\ell}/\ell^2, T'_{\ell}/\ell^3 \Big) \xlongrightarrow{ d }
\Big(\vartheta,\int_{0}^{\vartheta} \big(1+ M_{t}\big) \lambda(t)\mu(t) \,\dd t \Big),\qquad \text{as }\ell\to\infty.
\]  
\end{prop}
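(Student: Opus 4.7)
The strategy is to chain together the three tools already developed: the distributional coupling~\eqref{Equ:Coupling-homogeneous-to-independent} with the independent walks $\hat A^\ell,\hat B^\ell$, the Riemann-sum reformulation of the area in terms of the process $W_\ell$ introduced in~\eqref{Equ:Def-V} and~\eqref{Equ:Def-Wl}, and Lemma~\ref{Lem:Wlt-integration-weak-convergence}. The missing piece is that the actual traffic $T'_\ell$ is built from Poisson counts $\calP_{\ell,i}([B^\ell_i,A^\ell_i])$, not from the area $(A^\ell_i-B^\ell_i)\lambda(i/\ell^2)$, and Proposition~\ref{Prop:Error_in_area_approximation-homogeneous} is precisely designed to bridge this gap. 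I would first set
\[
S_\ell:=\frac{1}{\ell^3}\sum_{i=0}^{\tau'_\ell-1}(A^\ell_i-B^\ell_i)\lambda(i/\ell^2)\mu(i/\ell^2),\qquad \hat S_\ell:=\frac{1}{\ell^3}\sum_{i=0}^{\hat\tau_\ell-1}D^\ell_i\lambda(i/\ell^2)\mu(i/\ell^2),
\]
and, using~\eqref{Equ:Coupling-homogeneous-to-independent}, note that $(\tau'_\ell/\ell^2,S_\ell)\xlongequal{d}(\hat\tau_\ell/\ell^2,\hat S_\ell)$.

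Next, I would rewrite $\hat S_\ell$ as a Riemann sum in the variable $t=i/\ell^2$. Since $D^\ell_i/\ell$ is exactly the value of the piecewise-linear interpolant $W_{\ell,t}$ at $t=i/\ell^2$, and the step size is $1/\ell^2$, we have
\[
\hat S_\ell=\int_0^{\hat\tau_\ell/\ell^2}W_{\ell,t}\,\lambda(\lfloor\ell^2t\rfloor/\ell^2)\mu(\lfloor\ell^2t\rfloor/\ell^2)\,\dd t+o(1),
\]
where the $o(1)$ error absorbs the discrepancy between $W_{\ell,t}$ and $D^\ell_{\lfloor\ell^2t\rfloor}/\ell$ on intervals of length $\ell^{-2}$. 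Using uniform continuity of $\lambda$ and $\mu$ on compact sets together with the tightness of $\hat\tau_\ell/\ell^2$ (which follows from Lemma~\ref{Lem:Wlt-integration-weak-convergence}), I can replace the piecewise-constant integrand by $\lambda(t)\mu(t)$ with vanishing error. Lemma~\ref{Lem:Wlt-integration-weak-convergence} then yields directly
\[
\bigl(\hat\tau_\ell/\ell^2,\hat S_\ell\bigr)\xlongrightarrow{d}\Bigl(\vartheta,\int_0^\vartheta(1+M_t)\lambda(t)\mu(t)\,\dd t\Bigr),
\]
and hence the same convergence for $(\tau'_\ell/\ell^2,S_\ell)$.

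The remaining and most delicate step is to show that $T'_\ell/\ell^3-S_\ell\to 0$ in probability, so that Slutsky's theorem finishes the argument. Proposition~\ref{Prop:Error_in_area_approximation-homogeneous} controls the supremum of this difference only up to the deterministic horizon $\lfloor\ell^2 t\rfloor$, while $\tau'_\ell$ is random. The main obstacle is therefore to truncate safely: given $\epsilon>0$, I would fix $t$ so large that $\P(\vartheta>t)<\epsilon$, which by the already established convergence $\tau'_\ell/\ell^2\xlongrightarrow{d}\vartheta$ gives $\P(\tau'_\ell>\ell^2 t)<2\epsilon$ for all $\ell$ large. On the complementary event, the quantity $|T'_\ell/\ell^3-S_\ell|$ is bounded by the supremum controlled in Proposition~\ref{Prop:Error_in_area_approximation-homogeneous}, whose probability of exceeding $\epsilon$ is at most $t\,\epsilon^{-2}\lambda_{\max}\mu_{\max}^2/\ell^3\to 0$. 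A union bound therefore gives $\P(|T'_\ell/\ell^3-S_\ell|>\epsilon)\to 0$, which combined with the convergence of $(\tau'_\ell/\ell^2,S_\ell)$ concludes the proof by Slutsky.
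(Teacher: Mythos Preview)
Your approach is correct and matches the paper's: the same three ingredients (the coupling~\eqref{Equ:Coupling-homogeneous-to-independent}, Proposition~\ref{Prop:Error_in_area_approximation-homogeneous} for the Poisson-to-area replacement, and Lemma~\ref{Lem:Wlt-integration-weak-convergence}) are combined in the same order, and your truncation via tightness of $\tau'_\ell/\ell^2$ is exactly how the paper's uniform-in-$K$ bound gets used. The only point worth flagging is that what you package as a single ``$o(1)$'' --- the passage from the Riemann sum $\hat S_\ell$ to the integral $\int_0^{\hat\tau_\ell/\ell^2}W_{\ell,t}\lambda(t)\mu(t)\,\dd t$ --- is where the paper spends essentially all of its work: it decomposes this error into three pieces $\mathfrak E^\ell_{1,t},\mathfrak E^\ell_{2,t},\mathfrak E^\ell_{3,t}$ (integrand discretization, linear-interpolation correction, boundary term) and bounds each in $L^1$ or $L^2$ uniformly over $[0,K]$. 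Your heuristic that the per-interval discrepancy is small is right, but since you are summing $O(\ell^2)$ random increments $D^\ell_{i+1}-D^\ell_i$ divided by $\ell^3$, a moment bound (not just a pointwise one) is needed to make the $o(1)$ rigorous.
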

The proof is again postponed to Section~\ref{Sec:proof_prior_time_1}. We next deal with the remaining contribution between the times $\tau'_{\ell}$ and $\tau_{\ell}$, which turns out to be asymptotically negligible. 


\subsection{Between $\tau'_{\ell}$ and $\tau_{\ell}$: boundary contribution}\label{Sec:prior_time_0}
In order to represent the distance of the bounding random walks at time $\tau'_{\ell}$, we replace $\ell$ in the notation with $(\eta, j, \ell)$ when $A_0=\eta/2$ and $B_0=-\eta/2$, and the intensity of the Poisson point process at $(i, s)$ is given by $\lambda((i + j)/\ell^2)$. In this case, we also write $\calP_{\ell, i}$ as $\calP_{\ell, i}^{(j)}$ and $\calF_i$ as $\calF_i^{(j)}$ and note that, by the vertical homogeneity, only the initial distance $A_0-B_0=\eta$ is relevant and not the specific starting positions. With this notation, we have the  following tail bounds on the distribution of $\tau'_{\ell}$, with the proof given in Section~\ref{Sec:proof_prior_time_0}.
\begin{prop}\label{Prop:tau_1_vert_inhom_decay}
   There exist a constant $\bar{C} \in (0,\infty)$ such that, for all $\eta,t,\ell\geq 1$ and $j \geq 0$,
\[
        \P\big(\tau'_{(\eta, j, \ell)} \geq t \big)
        \leq \bar{C}\eta t^{-1/2}.
\]
    In particular,  for all $t,\ell \geq 1$,
\[
       \P\big(\tau'_{\ell} \geq t\big) \leq \bar{C}\ell t^{-1/2}.
\]
\end{prop}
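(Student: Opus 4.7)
The plan is to combine the coupling from~\eqref{Equ:Coupling-homogeneous-to-independent} with a classical gambler's-ruin-type estimate for a mean-zero random walk. By that coupling (adapted to the time shift $j$), $\tau'_{(\eta,j,\ell)}$ agrees in distribution with the analogous $\hat\tau_{(\eta,j,\ell)}$ built from the fully independent walks $\hat A^\ell, \hat B^\ell$. Writing $D^\ell_i := \hat A^\ell_i - \hat B^\ell_i$, the defining identity $\hat\tau_{(\eta,j,\ell)} = \inf\{i\geq 0 \colon D^\ell_i \leq \hat{\underline{X}}^\ell_i + \hat{\overline{Y}}^\ell_i\}$ together with the non-negativity of the right-hand side yields the pathwise upper bound $\hat\tau_{(\eta,j,\ell)} \leq T_0 := \inf\{i\geq 0 \colon D^\ell_i \leq 0\}$. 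It therefore suffices to prove the uniform hitting-time bound $\P(T_0 \geq t) \leq \bar C \eta / \sqrt t$.

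The process $D^\ell$ is a random walk starting at $D^\ell_0 = \eta$ with independent, symmetric, mean-zero increments $\xi_i$, whose variances $\lambda((i+j)/\ell^2)^{-2}$ lie in $[\lambda_{\max}^{-2}, \lambda_{\min}^{-2}]$ and whose explicit densities yield uniformly bounded exponential moments. These properties place $D^\ell$ squarely in the scope of the classical random-walk hitting-time estimate: for a mean-zero walk with uniformly bounded variance and a uniform exponential moment, started at $\eta>0$, one has $\P(T_0 \geq t) \leq C\eta/\sqrt t$, with $C$ depending only on $\lambda_{\min}$ and $\lambda_{\max}$.

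A clean route to this estimate is Skorokhod embedding: realize $D^\ell - \eta$ as $\{B_{\theta_i}\}_{i\geq 0}$ for a standard Brownian motion $B$ and stopping times $\theta_0 = 0 < \theta_1 < \cdots$ with $\E[\theta_{i+1} - \theta_i \mid \calF_i] = \Var(\xi_i) \geq \lambda_{\max}^{-2}$ and uniformly bounded conditional second moment (the latter a consequence of the uniform exponential moments). Chebyshev's inequality then yields $\theta_t \geq \tfrac12\lambda_{\max}^{-2}t$ off an event of probability $O(1/t)$. On its complement, $\{T_0 \geq t\}$ is contained in $\{\inf_{s \leq ct} B_s > -\eta\}$, whose probability is at most $C\eta/\sqrt t$ by the Brownian reflection principle. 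The regime $t \leq \eta^2$ is handled trivially by choosing $\bar C \geq 1$, so that $\bar C \eta/\sqrt t \geq 1$.

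The main technical obstacle is executing the Skorokhod embedding rigorously for the time-inhomogeneous walk (with independent but not identically distributed steps), together with verifying the uniform-in-$i$ $L^2$ control on $\theta_{i+1} - \theta_i$; both follow from the uniform exponential moment bound on $\xi_i$. An elementary alternative, which would be equally suitable for the bound needed here, replaces the Skorokhod embedding by a direct martingale-plus-local-CLT argument for the walk $D^\ell$, exploiting the symmetry and smoothness of the step density to obtain the same $\eta/\sqrt t$ tail.
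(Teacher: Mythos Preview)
Your reduction to the hitting time $T_0 = \inf\{i \geq 0 : D^\ell_i \leq 0\}$ of the independent difference walk is correct and coincides with the paper's first step. The gap is in the Skorokhod-embedding argument: the inclusion $\{T_0 \geq t\} \cap \{\theta_t \geq ct\} \subset \{\inf_{s \leq ct} B_s > -\eta\}$ is false. Between embedding times $\theta_i$ and $\theta_{i+1}$ the Brownian motion can dip below $-\eta$ and return above it before being sampled, so $T_0 \geq t$ does not force $\inf_{s \leq \theta_t} B_s > -\eta$. Patching this via the exponential tails of the increments yields at best $\P(T_0 \geq t) \leq C(\eta + \log t)/\sqrt t$, and the $\log t$ term cannot be absorbed into $\bar C\eta$ uniformly over $\eta \geq 1$ and $t \geq 1$, which is what the proposition asserts.

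The paper's proof bypasses this entirely by invoking \cite[Lemma~1.7]{arak1975}, which bounds $\P\big(\max_{0\leq k\leq t}\sum_{i<k}(-R_i)<\eta\big)$ for independent mean-zero summands with uniformly bounded second and third absolute moments; this gives
\[
\bar C_3\Big(\eta + \max_k \frac{\E|R_k|^3}{\E R_k^2}\Big)\Big(\sum_{k<t}\E R_k^2\Big)^{-1/2} \leq C'(\eta+C'')t^{-1/2},
\]
which for $\eta\geq1$ is the claimed $\bar C\eta t^{-1/2}$. Your suggested fallback of a ``martingale-plus-local-CLT'' argument is too vague to assess; any such route would effectively have to reprove Arak's estimate in the non-identically-distributed setting, which is not elementary.
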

This result, together with the following bound on the distance between the two stopping times, provides the necessary tools to conclude the negligibility. 
\begin{prop}
\label{Prop:tau0-tau1-decay-rate-vertical-homo-semi}
    There exists a constant $\bar{\mathsf{C}} \in (0, \infty)$ such that, for any $\alpha > 0$ and all  $\ell\geq 1$,
\[
\P(\tau_{\ell} - \tau'_{\ell} > \ell^{\alpha}) \leq \bar{\mathsf{C}} \, \ell^{-\alpha/2} (\log \ell)^8.
\]
\end{prop}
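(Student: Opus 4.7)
The plan is to decompose the interval $[\tau'_\ell, \tau_\ell]$ into successive near-coalescence epochs and iteratively apply Proposition~\ref{Prop:tau_1_vert_inhom_decay} via the strong Markov property. Set $T^{(0)} := \tau'_\ell$ and, for $k \geq 1$, let
\[
T^{(k)} := \inf\bigl\{ i > T^{(k-1)} : N_i \leq 1 \bigr\},\qquad N_i := \calP_{\ell,i}\bigl([B^\ell_i, A^\ell_i]\bigr),
\]
and $K := \inf\{k\geq 0 : N_{T^{(k)}} = 0\}$, so that $\tau_\ell = T^{(K)}$. Writing $R_k := T^{(k)} - T^{(k-1)} - 1$ and $G_i := A^\ell_i - B^\ell_i$, we have $\tau_\ell - \tau'_\ell = K + \sum_{k=1}^K R_k$. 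The strong Markov property at $T^{(k-1)}+1$ identifies the conditional law of $R_k$ given $\calF_{T^{(k-1)}+1}$ with that of $\tau'_{(\lceil \eta_k \rceil,\, T^{(k-1)}+1,\, \ell)}$, where $\eta_k := G_{T^{(k-1)}+1}$, so Proposition~\ref{Prop:tau_1_vert_inhom_decay} yields
\[
\P\bigl( R_k \geq t \,\big|\, \calF_{T^{(k-1)}+1} \bigr) \;\leq\; \bar C\,(1 \vee \eta_k)\,t^{-1/2}.
\]

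Next, I control $\eta_k$ and $K$ uniformly in $k$ with $T^{(k-1)} \leq \ell^\alpha$; it suffices to treat $\alpha \leq 2\log\ell$ since the claim is trivial in the opposite regime. On $\{N_{T^{(k-1)}} = 1\}$ the unique interior Poisson point $p$ satisfies $\underline U^\ell_{T^{(k-1)}} = \overline L^\ell_{T^{(k-1)}} = p$, which yields the one-step contraction
\[
\eta_k \;=\; \bigl(G_{T^{(k-1)}} + X_{k-1} + Y_{k-1}\bigr)/2,
\]
with $X_{k-1}, Y_{k-1}$ independent exponentials of rate at least $\lambda_{\min}$. Since the Poisson formula gives $\P(N_i \leq 1 \mid G_i = d) \leq (1+\lambda_{\max}d)\,e^{-\lambda_{\min}d}$, a union bound over $i \leq \ell^\alpha$ together with standard exponential tail bounds on the $X_{k-1}, Y_{k-1}$ yields $\max_{k : T^{(k-1)} \leq \ell^\alpha} \eta_k \leq M := C_1 (\alpha \vee 1)\log\ell$ off an event of probability $O(\ell^{-\alpha/2})$, for a suitable $C_1 > 0$. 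On this good event, the Poisson formula also gives $\P(N_{T^{(k)}} = 0 \mid \calF_{T^{(k)}}) \geq (1 + \lambda_{\max}M)^{-1} =: q_\ell$, so $K$ is stochastically dominated by a geometric random variable with parameter $q_\ell$; hence $K \leq K_0 := C_2 (\alpha \vee 1)^2 (\log\ell)^2$ with probability $\geq 1 - \ell^{-\alpha/2}$.

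On the good event $\{K \leq K_0,\ \max_{k\leq K_0} \eta_k \leq M\}$, a union bound combined with the tail estimate from the first paragraph gives
\[
\P\Big( \sum_{k=1}^{K_0} R_k > \ell^\alpha \Big) \;\leq\; K_0 \cdot \bar C\, M\, (\ell^\alpha / K_0)^{-1/2} \;\leq\; C_3\, M\, K_0^{3/2}\, \ell^{-\alpha/2},
\]
and combining this with the complementary error terms yields the claimed estimate, since the factor $M\, K_0^{3/2} = O((\alpha \vee 1)^4 (\log\ell)^4)$ is absorbed into the $(\log \ell)^8$ prefactor throughout the range $\alpha \leq 2\log\ell$. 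The principal technical obstacle is the inductive control of the distances in the middle paragraph: between consecutive restart times the walks $A^\ell, B^\ell$ evolve as independent random walks and $G_i$ performs a mean-zero process whose maximum over $R_k$ steps fluctuates on scale $\sqrt{R_k}/\lambda_{\min}$, so rigorously showing that these excursions stay below $M$ throughout all $K_0$ restarts requires a careful application of Doob's maximal inequality together with the moment bound on $R_k$ extracted from Proposition~\ref{Prop:tau_1_vert_inhom_decay}.
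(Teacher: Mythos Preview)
Your approach is essentially the same as the paper's: decompose $[\tau'_\ell,\tau_\ell]$ into successive near-coalescence epochs $T^{(k)}$ (the paper calls these $\tau'^{(m)}$), bound the restart gap $\eta_k$, control the number $K$ of restarts via a geometric argument, and apply Proposition~\ref{Prop:tau_1_vert_inhom_decay} to each $R_k$. The paper packages the geometric step into a separate Lemma~\ref{Lem:tau_0_is_not_tau_1} and uses a single jump-size event $\mathscr E_\alpha=\{\max_i (\overline X_i\vee\underline X_i\vee\overline Y_i\vee\underline Y_i)\le (\log\ell)^2/4\}$ to bound the gap, exploiting the identity $G_{T^{(k)}}=\underline X^\ell_{T^{(k)}}+\overline Y^\ell_{T^{(k)}}$ when $N_{T^{(k)}}=1$; you instead use Poisson concentration ($N_i\le 1$ forces $G_i$ small) plus exponential tails on $X,Y$. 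Both routes are valid; the paper's gives $\alpha$-free prefactors $(\log\ell)^2$ for the gap and $(\log\ell)^4$ for $K_0$, while yours carries $\alpha$-dependent prefactors that you then absorb via $\alpha\le 2\log\ell$.

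Two remarks. First, your final paragraph about a ``principal technical obstacle'' is a red herring: you do \emph{not} need $G_i\le M$ throughout each excursion, only at the restart endpoints. Your Poisson-concentration union bound already gives $G_{T^{(k)}}\le M'$ directly (since $N_{T^{(k)}}\le 1$), and the one-step formula then bounds $\eta_{k+1}$. No Doob inequality or moment bound on $R_k$ is required for this part. Second, the union bound should run over $i\in[\tau'_\ell,\tau'_\ell+\ell^\alpha]$ rather than $i\le\ell^\alpha$; this is cosmetic since the number of indices is still $\ell^\alpha$ and the per-index bound $\sup_{d>M'}(1+\lambda_i d)e^{-\lambda_i d}$ does not depend on $i$.
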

Again, the proof is given in Section~\ref{Sec:proof_prior_time_0}. 


\subsection{Main result for vertically homogeneous semi lattices}
Now we are in the position to establish our first set of main results Theorem~\ref{Thm:Poisson:asymp-taul} and Theorem~\ref{Thm:Poisson:convergence-Tl-taul}, at least in the vertically homogeneous case, also using one technical input from the proof section below. 
\begin{proof}[Proof of Theorem~\ref{Thm:Poisson:convergence-Tl-taul}: vertically homogeneous case]
    In view of Proposition~\ref{Prop:Conv-in-dist-upto-tau1-vert-homo} and using Slutsky's theorem, it is enough to show that
    \begin{align}
     &\big(\tau_{\ell}-\tau'_{\ell} \big) /\ell^2  \xlongrightarrow{ p } 0,\qquad \text{as }\ell\to\infty
     \label{Equ:conv-in-prob-tau0-tau1}
    \end{align}
    and
    \begin{align}
     &\big(T_{\ell}-T'_{\ell} \big)/\ell^3   \xlongrightarrow{ p } 0,\qquad \text{as }\ell\to\infty.
     \label{Equ:conv-in-prob-T0-T1}
    \end{align}
On the one hand,~\eqref{Equ:conv-in-prob-tau0-tau1} follows immediately from Proposition~\ref{Prop:tau0-tau1-decay-rate-vertical-homo-semi}. 
On the other hand, as a consequence of~\eqref{Equ:tail-of-tau0-eta-j-l-3} and Propositions~\ref{Prop:Error_in_area_approximation-homogeneous},~\ref{Prop:Conv-in-dist-upto-tau1-vert-homo} and~\ref{Prop:tau0-tau1-decay-rate-vertical-homo-semi}, the probability of the event
\[
\Big\{ \tau'_{\ell} \leq \ell^3,\, \tau_{\ell} - \tau'_{\ell} \leq \ell,\, \mathscr{E}_{3}^{\ell}, \sup_{0\leq k \leq \lfloor 2\ell^3 \rfloor } \frac{1}{\ell^3} 
    \Big| \sum_{i=0}^{k} \calP_{\ell, i}([B_i^{\ell}, A_i^{\ell}])\mu(i/\ell^2)
    - \sum_{i = 0}^{k} (A^{\ell}_i- B^{\ell}_i) \lambda(i/\ell^2) \mu(i/\ell^2)  \Big| \leq \frac{1}{\sqrt{\ell}} \Big\}  
\]
tends to one as $\ell \to \infty$, and on this event,
\[
T_{\ell} - T'_{\ell} \leq 2\ell^2\sqrt{\ell} +\sum_{i=0}^{\ell-1} (2i+1)(\log \ell)^2 = 2\ell^2\sqrt{\ell} + \ell^2 (\log \ell)^2,
\]
which implies~\eqref{Equ:conv-in-prob-T0-T1}.   
\end{proof}
\begin{proof}[Proof of Theorem~\ref{Thm:Poisson:asymp-taul}]
The statement follows directly from Propositions~\ref{Prop:tau_1_vert_inhom_decay} and~\ref{Prop:tau0-tau1-decay-rate-vertical-homo-semi}.
\end{proof}

As mentioned, the proofs of the statements in Sections~\ref{Sec:prior_time_1} and~\ref{Sec:prior_time_0} can be found below in Sections~\ref{Sec:proof_prior_time_1} and~\ref{Sec:proof_prior_time_0}. Our results for semi lattices in the vertically inhomogenous case are quite a bit harder, since we must now track also the vertical positions of the walkers, leading to significantly less independence in the process. However, the general strategy is the same as in the homogeneous case. This case is dealt with in Section~\ref{Sec:Inhom}. Finally, in Section~\ref{Sec:Proofs_DL} we prove our results for the diluted lattice and in Section~\ref{Sec:Proofs_PL} we finish by considering the pure lattice. 



\section{Proofs}\label{Sec:Proofs}
Recall that we only consider the case where $\x=\o$ is the origin. The general case follows from minor adaptations of the proofs. 


\subsection{Proofs for vertically homogeneous semi lattices}\label{Sec:Proofs_SL_Hom}


\subsubsection{Proofs for Section~\ref{Sec:prior_time_1}}\label{Sec:proof_prior_time_1}
\begin{proof}[Proof of Proposition~\ref{Prop:Error_in_area_approximation-homogeneous}]
Define
\[
\calZ^{\ell}_k:=  \sum_{i = 0}^{k-1}  \calP_{\ell, i}([B_i^{\ell}, A_i^{\ell}])\mu(i/\ell^2)
- \sum_{i = 0}^{k-1} \big(A^{\ell}_i- B^{\ell}_i\big) \lambda(i/\ell^2) \mu(i/\ell^2)
\]
and observe that
\begin{align*}
\E\big[ \calZ^{\ell}_{k+1}- \calZ^{\ell}_k | \calF_k \big] 
= \mu(k/\ell^2) \E\big[\calP_{\ell, k}([B^{\ell}_k, A^{\ell}_k])
-   (A^{\ell}_k- B^{\ell}_k) \lambda(k/\ell^2) | \calF_k \big] = 0.
\end{align*}
This implies that $\{\calZ^{\ell}_k\colon k \geq 0\}$ is a martingale with
\begin{align*}
\E\big[ \big(\calZ^{\ell}_{k+1}\big)^2- \big(\calZ^{\ell}_k \big)^2 | \calF_k \big]
&=\E\big[ 2\calZ^{\ell}_k\big(\calZ^{\ell}_{k+1}-\calZ^{\ell}_k \big)+ \big(\calZ^{\ell}_{k+1}-\calZ^{\ell}_k \big)^2 | \calF_k \big]\\
&=\E\big[ \big(\calZ^{\ell}_{k+1}-\calZ^{\ell}_k \big)^2 | \calF_k \big]\\
&=\mu(k/\ell^2)^2\E\big[ \big({\calP}_k{[B^{\ell}_k, A^{\ell}_k]}
-   (A^{\ell}_k- B^{\ell}_k) \lambda(k/\ell^2) \big)^2 |\calF_k \big].
\end{align*}
Since the conditional distribution of the number of points $\calP_{\ell, k}([B^{\ell}_k, A^{\ell}_k])$, given $\calF_k$, is Poisson with parameter $(A^{\ell}_k- B^{\ell}_k) \lambda(k/\ell^2)$, we obtain that
\begin{align*}
\E\big[ \big(\calZ^{\ell}_{k+1}\big)^2- \big(\calZ^{\ell}_k \big)^2 |\calF_k \big]
&=\mu(k/\ell^2)^2 (A^{\ell}_k- B^{\ell}_k) \lambda(k/\ell^2),
\end{align*}
which implies that
\begin{align*}
\E\big[ \big(\calZ^{\ell}_{k+1}\big)^2- \big(\calZ^{\ell}_k \big)^2  \big]=\mu(k/\ell^2)^2\lambda(k/\ell^2) \E[A^{\ell}_k- B^{\ell}_k]=\ell \mu(k/\ell^2)^2\lambda(k/\ell^2).
\end{align*}
Therefore, using Doob's maximal inequality, we obtain that
\begin{align*}
\P\Big( \sup_{0\leq k \leq \lfloor \ell^2 t \rfloor } \big| \calZ^{\ell}_{k+1} \big| >\epsilon \ell^3 \Big)
&\leq \lim_{\ell\uparrow\infty}\frac{1}{\epsilon^2\ell^6} \E\big[ \big(\calZ^{\ell}_{\lfloor \ell^2 t \rfloor +1}\big)^2 \big]= \lim_{\ell\uparrow\infty}\frac{1}{\epsilon^2\ell^5}\sum_{i = 0}^{\lfloor \ell^2 t \rfloor}\mu(i/\ell^2)^2\lambda(i/\ell^2)
\leq \epsilon^{-2}\ell^{-3}t\lambda_{\max}\mu_{\max}^2,
\end{align*}
as desired. 
\end{proof}

\begin{proof}[Proof of Lemma~\ref{Lem:Wlt-weak-convergence}]
We begin by noting that it suffices to prove that, for any $K  > 0$,
\[
\{W_{\ell,t}\colon 0\leq t\leq K \} \xlongrightarrow{ d } \{1+M_t \colon 0\leq t\leq K\},\qquad\text{ as }\ell \to \infty.
\]
    Let $\{{1/\lambda^{(m)}}\colon m\geq 1\}$ be a sequence of  piece-wise constant approximation of the function $1/\lambda$ in the $\rho$-metric.
    Observe that
\[
        W_{\ell,t}= 1+ \frac{1}{\ell} \Big( \sum_{j=1}^{\lfloor \ell^2t \rfloor} \frac{1}{\lambda((j-1)/\ell^2)} V_j + ( \ell^2t- \lfloor \ell^2t \rfloor) \frac{1}{\lambda(\lfloor \ell^2t \rfloor/\ell^2)} V_{\lfloor \ell^2t \rfloor+1}  \Big).
\]
    Similarly we define 
    \begin{align*}
         W^{(m)}_{\ell,t}:= 1+ \frac{1}{\ell} \Big( \sum_{j=1}^{\lfloor \ell^2t \rfloor} \frac{1}{\lambda^{(m)}((j-1)/\ell^2)} V_j + ( \ell^2t- \lfloor \ell^2t \rfloor) \frac{1}{\lambda^{(m)}(\lfloor \ell^2t \rfloor/\ell^2)} V_{\lfloor \ell^2t \rfloor+1}  \Big).
    \end{align*}
    Now, by Donsker's invariance principle, we obtain that for every $m\geq 1$,
    \begin{align}
    \{W^{(m)}_{\ell,t}\colon 0\leq t\leq K \} \xlongrightarrow{ d }  
    \Big\{1+\int_{0}^t \frac{1}{\lambda^{(m)}(s)}\,\dd B_s\colon 0\leq t\leq K \Big\} \qquad \text{ as $\ell\to\infty$.}
    \label{Equ:Wmlt-as-l-to-infty}
    \end{align}
    Furthermore, by Itô isometry,  we have 
     \begin{align}
    \Big\{1+\int_{0}^t \frac{1}{\lambda^{(m)}(s)}\,\dd B_s\colon 0\leq t\leq K \Big\}
    \xlongrightarrow{ d } 
    \Big\{1+\int_{0}^t \frac{1}{\lambda(s)}\,\dd B_s\colon 0\leq t\leq K \Big\} \qquad
    \text{ as $m\to\infty$.}
    \label{Equ:Wmlt-as-l-to-infty-then-m-to-infty}
    \end{align}
    Considering~\eqref{Equ:Wmlt-as-l-to-infty} and~\eqref{Equ:Wmlt-as-l-to-infty-then-m-to-infty}, along with~\cite[Theorem~3.2]{Billingsley99},  proving Lemma~\ref{Lem:Wlt-weak-convergence} reduces to showing that
    \[
    \lim_{m\uparrow\infty} \limsup_{\ell\uparrow\infty} \P\Big( \sup_{t\in[0,K ]} \big | W_{\ell,t}-  W^{(m)}_{\ell,t} \big| >\epsilon\Big)=0.
    \]
For this, since both $W_{\ell,t}$ and $W^{(m)}_{\ell,t}$ are linear interpolations, we have 
    \[
     \sup_{t\in[0,K ]} \big| W_{\ell,t}-  W^{(m)}_{\ell,t} \big| = \sup_{1\leq i\leq \lceil \ell^2K  \rceil} \Big| \frac{1}{\ell} \sum_{j=1}^{i}  \Big(\frac{1}{\lambda((j-1)/\ell^2)}-  \frac{1}{\lambda^{(m)}((j-1)/\ell^2)}\Big) V_j \Big|.
    \]
    Therefore, by Kolmogorov's maximal inequality, 
    \[
       \P\Big( \sup_{t\in[0,K ]} \big| W_{\ell,t}-  W^{(m)}_{\ell,t} \big| >\epsilon\Big) \leq \frac{1}{\epsilon^2\ell^2}\sum_{j=1}^{ \lceil \ell^2K  \rceil}  \Big(\frac{1}{\lambda((j-1)/\ell^2)}-  \frac{1}{\lambda^{(m)}((j-1)/\ell^2)}\Big)^2
    \]
    and hence,
    \[
    \lim_{m\uparrow\infty} \limsup_{\ell\uparrow\infty} \P\Big( \sup_{t\in[0,K ]} \big| W_{\ell,t}-  W^{(m)}_{\ell,t} \big| >\epsilon\Big)
    \leq \frac{1}{\epsilon^2} \lim_{m\uparrow\infty} \int_{0}^K  \Big( \frac{1}{\lambda(s)}-\frac{1}{\lambda^{(m)}(s)} \Big)^2 \,\dd s =0
    \]
and the proof is completed. 
\end{proof}

\begin{proof}[Proof of Lemma~\ref{Lem:Wlt-integration-weak-convergence}]
    Recall that
\[
 \hat \tau_\ell= \inf\Big\{i\geq 0 \colon D^{\ell}_i < \hat{\underline{X}}^\ell_i + \hat{\overline{Y}}^\ell_i \Big\}.
\]
We define, for any $\alpha\in(0,1)$,
\[
\varsigma^{\ell}_{\alpha \ell}:= \inf\{i\geq 0 \colon D^{\ell}_i\leq \alpha \ell \}
\qquad\text{ and }\qquad
      \varsigma^{\ell}_0:= \inf\{i\geq 0 \colon D^{\ell}_i\leq 0 \},
\]
 and note that, by construction, $W_{\ell}$ is linear in the interval $[i/\ell^2,(i+1)/\ell^2]$, for all $i\geq 0$. Therefore, we have that
\[
(\varsigma^{\ell}_{\alpha \ell}-1)/\ell^2 < \calH_{\alpha}(W_{\ell}) \leq \varsigma^{\ell}_{\alpha \ell}/\ell^2 \qquad\text{ and }\qquad  (\varsigma^{\ell}_0-1)/\ell^2 < \calH_{0}(W_{\ell}) \leq\varsigma^{\ell}_0/\ell^2.
\]
Observe that, by definition of $\hat \tau_\ell$, $D^{\ell}_i>0$ for all $0\leq i< \hat \tau_\ell$, and
\begin{align*}
D^{\ell}_{\hat \tau_\ell} = \hat A^{\ell}_{\hat \tau_\ell} - \hat B^{\ell}_{\hat \tau_\ell} &= \hat A_{\hat \tau_\ell-1} - \hat B_{\hat \tau_\ell-1}+\frac{1}{2}\Big( \hat{\overline{X}}^\ell_{\hat \tau_\ell-1} +\hat{\underline{Y}}^\ell_{\hat \tau_\ell-1} - \hat{\underline{X}}^\ell_{\hat \tau_\ell-1} - \hat{\overline{Y}}^\ell_{\hat \tau_\ell-1} \Big)\\
&= D^{\ell}_{\hat \tau_\ell-1} +\frac{1}{2}\Big( \hat{\overline{X}}^\ell_{\hat \tau_\ell-1} +\hat{\underline{Y}}^\ell_{\hat \tau_\ell-1} - \hat{\underline{X}}^\ell_{\hat \tau_\ell-1} - \hat{\overline{Y}}^\ell_{\hat \tau_\ell-1} \Big)\\
&\geq \hat{\underline{X}}^\ell_{\hat \tau_\ell-1} + \hat{\overline{Y}}^\ell_{\hat \tau_\ell-1} + \frac{1}{2}\Big( \hat{\overline{X}}^\ell_{\hat \tau_\ell-1} +\hat{\underline{Y}}^\ell_{\hat \tau_\ell-1} - \hat{\underline{X}}^\ell_{\hat \tau_\ell-1} - \hat{\overline{Y}}^\ell_{\hat \tau_\ell-1} \Big)\\
&= \frac{1}{2}\Big( \hat{\overline{X}}^\ell_{\hat \tau_\ell-1} +\hat{\underline{Y}}^\ell_{\hat \tau_\ell-1} + \hat{\underline{X}}^\ell_{\hat \tau_\ell-1} + \hat{\overline{Y}}^\ell_{\hat\tau_\ell-1} \Big)>0.
\end{align*}
This implies that $\hat \tau_\ell \leq \varsigma^{\ell}_0-1$, which further implies that
\[
    \hat{\tau}_\ell/\ell^2 \leq \calH_{0}(W_{\ell}).
\]
Since $W_{\ell,t} \geq 0$  for all $0\leq t\leq \calH_{0}(W_\ell)$, we also have that 
\[
    \int_{0}^{\hat{\tau}_\ell/\ell^2} W_{\ell,t}\lambda(t)\mu(t) \,\dd t  \leq   \int_{0}^{\calH_{0}(W_\ell) } W_{\ell,t}\lambda(t)\mu(t) \,\dd t = \calI_{0}(W_\ell).
\]
Therefore, for any $x,y\geq 0$,
\[
  \P\Big( \hat{\tau}_\ell/\ell^2 \leq x, \int_{0}^{\hat{\tau}_\ell/\ell^2} W_{\ell,t} \lambda(t)\mu(t)\,\dd t \leq y  \Big)
   \geq \P\Big( \calH_{0}(W_\ell) \leq x, \calI_{0}(W_\ell)\leq y  \Big).
\]
This, together with Lemma~\ref{Lem:Wlt-weak-convergence} and Remark~\ref{Rem:Continuity-of-stopping-time-vert-homo-semi}, implies that, for any $x,y\geq 0$,
\begin{align}
    \label{Equ:Weak-hat-tau_l-liminf-semi}
\liminf_{\ell\to\infty} \P\Big( \hat{\tau}_\ell/\ell^2 \leq x, \int_{0}^{\hat{\tau}_\ell/\ell^2} W_{\ell,t} \,\dd t \leq y  \Big) \geq \P( \calH_0(1+M_{.})\leq x, \calI_0(1+M_{.})\leq y).
\end{align}
On the other hand, on the event $\{ X_{\hat{\tau}_\ell} + Y_{\hat{\tau}_\ell} \leq \alpha \ell \}$, we have that $\hat{\tau}_\ell \geq \varsigma^{\ell}_{\alpha \ell}$, and consequently,
\[
\int_{0}^{\hat{\tau}_\ell/\ell^2} W_{\ell,t} \lambda(t)\mu(t) \,\dd t \geq \calI_{\alpha}(W_\ell).
\]
Therefore, we have
\[
    \P\Big( \hat{\tau}_\ell/\ell^2 \leq x, \int_{0}^{\hat{\tau}_\ell/\ell^2} W_{\ell,t} \lambda(t)\mu(t) \,\dd t \leq y  \Big)\le\P( \calH_{\alpha}(W_\ell) \leq x, \calI_{\alpha}(W_\ell) \leq y)+ \P\Big( \max_{0\leq i \leq \lceil x\ell^2 \rceil} \big( \hat{\underline{X}}^\ell_{i} + \hat{\overline{Y}}^\ell_{i} \big) > \alpha \ell\Big),
\]
where
\[
    \P\Big( \max_{0\leq i \leq \lceil x\ell^2 \rceil} \big( \hat{\underline{X}}^\ell_{i} + \hat{\overline{Y}}^\ell_{i} \big) > \alpha \ell\Big)
    \le \sum_{i=0}^{\lceil x \ell^2 \rceil} \P\big( \hat{\underline{X}}^\ell_{i} + \hat{\overline{Y}}^\ell_{i} >   \alpha \ell  \big)
    \leq  2(\lceil x \ell^2 \rceil + 1) \e^{- \lambda_{\min} \alpha \ell/2}, 
\]
which tends to zero, as $\ell\to\infty$.
This, together with Lemma~\ref{Lem:Wlt-weak-convergence} and Remark~\ref{Rem:Continuity-of-stopping-time-vert-homo-semi}, implies that
\begin{align}
\label{Equ:Weak-hat-tau_l-limsup-semi}
    \limsup_{\ell\uparrow\infty} \P\Big( \hat{\tau}_\ell/\ell^2 \leq x, \int_{0}^{\hat{\tau}_\ell/\ell^2} W_{\ell,t} \lambda(t)\mu(t) \,\dd t \leq y  \Big) \leq \P\big( \calH_{\alpha}(1+M_{.})\leq x, \calI_{\alpha}(1+M_{.})\leq y\big).
\end{align}
Now, note that, for any $f\in \calC[0,\infty)$ with $f(0)>a$, $\alpha\mapsto\calH_{\alpha}(f)$ is right-continuous at $a$.  Therefore, by combining~\eqref{Equ:Weak-hat-tau_l-liminf-semi} and~\eqref{Equ:Weak-hat-tau_l-limsup-semi} and then taking $\alpha\downarrow 0$, we get
\[
\lim_{\ell\to\infty} \P\Big( \hat{\tau}_\ell/\ell^2 \leq x, \int_{0}^{\hat{\tau}_\ell/\ell^2} W_{\ell,t} \lambda(t)\mu(t)\,\dd t \leq y  \Big) = \P\big( \calH_0(1+M_{.})\leq x, \calI_0(1+M_{.})\leq y  \big).
\]
This proves the result.
\end{proof}

\begin{proof}[Proof of Proposition~\ref{Prop:Conv-in-dist-upto-tau1-vert-homo}]
In view of Proposition~\ref{Prop:Error_in_area_approximation-homogeneous}, Lemma~\ref{Lem:Wlt-integration-weak-convergence} and~\eqref{Equ:Coupling-homogeneous-to-independent}, it is enough to show that, for any $K > 0$, 
\begin{align}
    \sup_{0\leq t\leq K} \Big| \int_0^t W_{\ell,s}\lambda(s)\mu(s)\,\dd s -  \frac{1}{\ell^3} \sum_{i=0}^{\lfloor \ell^2 t\rfloor-1 } D^{\ell}_i\lambda(i/\ell^2)\mu(i/\ell^2) \Big| \xlongrightarrow{ p } 0,\qquad\text{as }\ell \to \infty.
    \label{Equ:Error-in-Integral-approx-coupled-process}
\end{align}
For this, from the definition of $W_{\ell}$ in~\eqref{Equ:Def-Wl}, we have that
\begin{align*}
         \ell \int_0^t W_{\ell,s}\lambda(s)\mu(s)\,\dd s &=\sum_{i=0}^{\lfloor \ell^2 t\rfloor-1} \int_{i/\ell^2}^{(i+1)/\ell^2} \Big( D^{\ell}_i +(\ell^2s-i)(D^{\ell}_{i+1}-D^{\ell}_i) \Big)\lambda(s)\mu(s)\,\dd s \\
        & \qquad +\int_{\lfloor \ell^2 t\rfloor/\ell^2}^t \Big( D^{\ell}_{\lfloor \ell^2 t\rfloor} +(\ell^2s-\lfloor \ell^2 t\rfloor)(D^{\ell}_{\lfloor \ell^2 t\rfloor+1}-D^{\ell}_{\lfloor \ell^2 t\rfloor}) \Big)\lambda(s)\mu(s)\,\dd s
\end{align*}
and therefore we get
    \begin{align}
         \int_0^t W_{\ell,s}\lambda(s)\mu(s)\,\dd s - \frac{1}{\ell^3} \sum_{i=0}^{\lfloor \ell^2 t\rfloor-1 } D^{\ell}_i\lambda(i/\ell^2)\mu(i/\ell^2) =\mathfrak E^{\ell}_{1,t}+\mathfrak E^{\ell}_{2,t}+\mathfrak E^{\ell}_{3,t},
       \label{Equ:sum-of-Eil}
    \end{align}
where
\begin{align*}
 \mathfrak E^{\ell}_{1,t}  &:= \frac{1}{\ell} \sum_{i=0}^{\lfloor \ell^2 t\rfloor-1 } D^{\ell}_i\Big(\int_{i/\ell^2}^{(i+1)/\ell^2} \lambda(s)\mu(s)\,\dd s  - \frac{\lambda(i/\ell^2)\mu(i/\ell^2) }{\ell^2}\Big),\\
 \mathfrak E^{\ell}_{2,t}  &:= \frac{1}{\ell} \sum_{i=0}^{\lfloor \ell^2 t\rfloor-1 } (D^{\ell}_{i+1}-D^{\ell}_i)\int_{i/\ell^2}^{(i+1)/\ell^2} (\ell^2s-i)\lambda(s)\mu(s)\,\dd s, \text{ and}\\
 \mathfrak E^{\ell}_{3,t} &:= \frac{1}{\ell} \int_{\lfloor \ell^2 t\rfloor/\ell^2}^t \Big( D^{\ell}_{\lfloor \ell^2 t\rfloor} +(\ell^2s-\lfloor \ell^2 t\rfloor)(D^{\ell}_{\lfloor \ell^2 t\rfloor+1}-D^{\ell}_{\lfloor \ell^2 t\rfloor}) \Big)\lambda(s)\mu(s)\,\dd s.
\end{align*}
We establish the desired result in several steps.

\medskip
\noindent
\emph{Step (1): first term}.
Since $\lambda$ and $\mu$ are uniformly continuous on the compact interval $[0,K]$, writing 
     \[
     a_{i,\ell}:= \int_{i/\ell^2}^{(i+1)/\ell^2} \lambda(s)\mu(s)\,\dd s  - \frac{\lambda(i/\ell^2)\mu(i/\ell^2) }{\ell^2},
     \]
     we have that
     \begin{align}
         \lim_{\ell\to\infty}\ell^2  \max_{0\leq i \leq \lfloor K\ell^2\rfloor-1} |a_{i,\ell}| =0.
         \label{Equ:l^2*ail}
     \end{align}
Now, by~\eqref{Equ:Def-V}, we obtain that
\begin{align*}
       \mathfrak E^{\ell}_{1,t}
     &= \frac{1}{\ell} \sum_{i=0}^{\lfloor \ell^2 t\rfloor-1 }  a_{i,\ell} D^{\ell}_i 
    = \frac{1}{\ell} \sum_{i=0}^{\lfloor \ell^2 t\rfloor-1 } a_{i,\ell} \Big( \ell+ \sum_{j=1}^i \frac{V_j}{\lambda((j-1)/\ell^2)}\Big) \\
    &= \sum_{i=0}^{\lfloor \ell^2 t\rfloor-1 } a_{i,\ell} + \sum_{j=1}^{\lfloor \ell^2 t\rfloor-1 } \frac{V_j}{\lambda((j-1)/\ell^2)} \sum_{i=j}^{\lfloor \ell^2 t\rfloor-1} \frac{a_{i,\ell}}{\ell} 
    = \sum_{i=0}^{\lfloor \ell^2 t\rfloor-1 } a_{i,\ell} + \sum_{i=1}^{\lfloor \ell^2 t\rfloor-1 } \frac{a_{i,\ell}}{\ell} \sum_{j=1}^{i} \frac{V_j}{\lambda((j-1)/\ell^2)}.
\end{align*}
This implies
\[
    \sup_{0\leq t\leq K}  \big| \mathfrak E^{\ell}_{1,t} \big|
    \leq \lfloor \ell^2 K\rfloor \max_{0\leq i \leq \lfloor \ell^2 K \rfloor-1} |a_{i,\ell}|+ \frac{\lfloor \ell^2 K \rfloor}{\ell}  \max_{0\leq i \leq \lfloor \ell^2 K \rfloor-1} |a_{i,\ell}| 
    \sup_{0\leq i\leq \lfloor \ell^2 K \rfloor-1} \Big|
    \sum_{j=1}^{i} \frac{V_j}{\lambda((j-1)/\ell^2)} 
    \Big|.
\]
Therefore, using $(a+b)^2\leq 2(a^2+b^2)$, together with~\eqref{Equ:l^2*ail} and Doob's maximal inequality, we obtain
\begin{align*}
    \limsup_{\ell \uparrow \infty}&
    \E\Big[\Big(\sup_{0\leq t\leq K}  \big| \mathfrak E^{\ell}_{1,t} \big|\Big)^2 \Big]\\
    &\leq \limsup_{\ell \uparrow \infty} 2\Big(
    \lfloor \ell^2 K\rfloor \max_{0\leq i \leq \lfloor \ell^2 K\rfloor-1} |a_{i,\ell}| \Big)^2
    \Big(1 + \frac{1}{\ell^2} \mathbb{E}\Big[\sup_{0\leq i\leq \lfloor \ell^2 K\rfloor-1} \Big(
    \sum_{j=1}^{i} \frac{V_j}{\lambda((j-1) /\ell^2)} 
    \Big)^2\Big] \Big) \\
    &\leq \limsup_{\ell \uparrow \infty}
    2 \Big(\lfloor \ell^2 K\rfloor \max_{0\leq i \leq \lfloor \ell^2 K\rfloor-1} |a_{i,\ell}| \Big)^2
    \Big(1 + \frac{4}{\ell^2} \mathbb{E}\Big[\Big(
    \sum_{j=1}^{\lfloor \ell^2 K\rfloor-1} \frac{V_j}{\lambda((j-1) /\ell^2)} 
    \Big)^2 \Big] \Big) \\
    &\leq \limsup_{\ell \uparrow \infty}
    2 K^2 \Big(\ell^2 \max_{0\leq i \leq \lfloor \ell^2 K\rfloor-1} |a_{i,\ell}|\Big)^2\Big( 1+ 4\int_0^K \frac{1}{(\lambda(x))^2}\,\dd x \Big)  = 0,
\end{align*}
which implies that
\begin{align}
     \sup_{0\leq t\leq K}  \big| \mathfrak E^{\ell}_{1,t} \big| \xlongrightarrow{ p } 0,\qquad\text{as }\ell \to \infty.
     \label{Equ:E1l-to-0-in-P}
\end{align}

\medskip
\noindent
\emph{Step (2): second term}.
For the second term in~\eqref{Equ:sum-of-Eil}, we observe that
   \begin{align*}
        &\sup_{0 \leq t \leq  K } \big|\mathfrak E^{\ell}_{2,t}\big|
        \leq \frac{1}{\ell} \sum_{i=0}^{\lfloor  \ell^2 K \rfloor-1 } \int_{i/\ell^2}^{(i+1)/\ell^2} (\ell^2s-i)\lambda(s)\mu(s)\,\dd s\frac{1}{\lambda((i+1)/\ell^2)}|V_{i+1}|.
    \end{align*} 
Taking expectations and passing to the limit yields
 \begin{align*}
    \limsup_{\ell \to \infty}
    \E\Big[\sup_{0\leq t\leq K}  \big| \mathfrak E^{\ell}_{2,t} \big| \Big]
    &\leq \limsup_{\ell \uparrow \infty} \frac{1}{\ell} \sum_{i=0}^{\lfloor  \ell^2 K \rfloor-1 } \int_{i/\ell^2}^{(i+1)/\ell^2} (\ell^2s-i) \frac{\lambda(s)\mu(s)}{\lambda((i+1)/\ell^2)}\,\dd s \E[|V_1|] \\
    &\leq \limsup_{\ell \uparrow \infty}
    \frac{1}{\ell}\E[|V_1|] K \frac{\lambda_{\max}\mu_{\max}}{\lambda_{\min}}  = 0,
\end{align*}   
and hence,
\begin{align}
     \sup_{0\leq t\leq K}  \big| \mathfrak E^{\ell}_{2,t} \big| \xlongrightarrow{ p } 0,\qquad\text{as }\ell \to \infty.
     \label{Equ:E2l-to-0-in-P}
\end{align} 

\medskip
\noindent
\emph{Step (3): third term}.
Finally, for the third term in~\eqref{Equ:sum-of-Eil}, we observe that
    \begin{align*}
        \mathfrak E^{\ell}_{3,t}
        &= \frac{D^{\ell}_{\lfloor t\ell^2\rfloor}}{\ell}\int_{\lfloor t\ell^2\rfloor/\ell^2}^t \lambda(s)\mu(s)\,\dd s + \frac{(D^{\ell}_{\lfloor t\ell^2\rfloor+1}-D^{\ell}_{\lfloor t\ell^2\rfloor})}{\ell}\int_{\lfloor t\ell^2\rfloor/\ell^2}^t (\ell^2s-\lfloor t\ell^2\rfloor)\lambda(s)\mu(s)\,\dd s\\
        &= \frac{1}{\ell}\Big( \ell+ \sum_{j=1}^{\lfloor t\ell^2\rfloor} \frac{V_j}{\lambda((j-1)/\ell^2)}\Big)\int_{\lfloor t\ell^2\rfloor/\ell^2}^t \lambda(s)\mu(s)\,\dd s+ \frac{V_{\lfloor t\ell^2\rfloor+1}}{\ell \lambda(\lfloor t\ell^2\rfloor/\ell^2)}\int_{\lfloor t\ell^2\rfloor/\ell^2}^t (\ell^2s-\lfloor t\ell^2\rfloor)\lambda(s)\mu(s)\,\dd s.
    \end{align*}
   This implies that
    \begin{align*}
        \sup_{0 \leq t \leq T}|\mathfrak E^{\ell}_{3,t}| 
        \leq \frac{\lambda_{\max}\mu_{\max}}{\ell^3\lambda_{\min}}\Big(\ell \lambda_{\min}+
        \sum_{j=1}^{\lfloor t\ell^2\rfloor+1} |V_j|\Big).
    \end{align*}
    Taking expectations and applying Markov's inequality, we conclude that
    \begin{align}
        \sup_{0 \leq t \leq T}|\mathfrak E^{\ell}_{3,t}|\xlongrightarrow{ p } 0,\qquad\text{as }\ell \to \infty.
        \label{Equ:E3l-to-0-in-P}
    \end{align}
Now, combining~\eqref{Equ:sum-of-Eil},~\eqref{Equ:E1l-to-0-in-P},~\eqref{Equ:E2l-to-0-in-P}, and~\eqref{Equ:E3l-to-0-in-P} establishes~\eqref{Equ:Error-in-Integral-approx-coupled-process}, thereby proving the result.
\end{proof}


\subsubsection{Proofs for Section~\ref{Sec:prior_time_0}}\label{Sec:proof_prior_time_0}
\begin{proof}[Proof of Proposition~\ref{Prop:tau_1_vert_inhom_decay}]
  Observe that
  \begin{align}
        \P\big(\tau'_{(\eta, j, \ell)} \geq t \big)
        &= \P\big(\hat\tau_{(\eta, j, \ell)} \geq t \big)
        \le \P\Big(\min_{0\leq k \leq t} D_k^{(\eta, j, \ell)} > 0\Big)\nonumber\\
        &= \P\Big(\min_{0\leq k \leq t} \Big( \eta + \sum_{i = 0}^{k-1} \big(D^{(\eta, j, \ell)}_{i+1} - D^{(\eta, j, \ell)}_{i}\big) \Big) > 0\Big)= \P\Big(\max_{0\leq k \leq t} \sum_{i = 0}^{k-1} (-R_i) < \eta\Big),
        \label{Equ:tail-of-tau1^abjr}
    \end{align} 
    where $R_i:= D^{(\eta, j, \ell)}_{i+1} - D^{(\eta, j, \ell)}_{i} = (\hat{\overline{X}}^{\ell}_i+\hat{\underline{Y}}^{\ell}_i -\hat{\underline{X}}^{\ell}_i-\hat{\overline{Y}}^{\ell}_i)/2$. Since, for all $i\geq 0$, the random variables $\hat{\overline{X}}^{\ell}_i,\hat{\underline{X}}^{\ell}_i ,\hat{\overline{Y}}^{\ell}_i,\hat{\underline{Y}}^{\ell}_i$ are i.i.d.\ $\Exp(\lambda((i+j)/\ell))$, there exist constants $0 < \bar{C}_1 < \bar{C}_2 < \infty$, depending only on $\lambda_{\max}$ and $\lambda_{\min}$, such that, for all $i \geq 0$,
\[
\E[{R_i}^2], \E[|R_i|^3]\in (\bar{C}_1, \bar{C}_2).
\]
   Therefore, by applying~\cite[Lemma~1.7]{arak1975}, the right-hand side of~\eqref{Equ:tail-of-tau1^abjr} is bounded by 
\[
    \bar{C}_3\Big( \eta+ \max_{0\leq k \leq t} \frac{\E[|R_k|^3]}{\E[{R_k}^2]} \Big)\Big(\sum_{k=0}^{\lfloor t \rfloor-1} \E[{R_k}^2]\Big)^{-1/2}
    \leq
    \frac{\bar{C}_3}{\sqrt{\bar{C}_1\lfloor t \rfloor}}\Big(\eta+ \frac{\bar{C}_2}{\bar{C}_1} \Big),
\]
    for some $\bar{C}_3\in(0,\infty)$.
    This proves the result.
\end{proof}
In order to prepare the proof of Proposition~\ref{Prop:tau0-tau1-decay-rate-vertical-homo-semi}, we show the following. 
\begin{lemma}
\label{Lem:tau_0_is_not_tau_1}
There exists a constant $\tilde{C} \in (0, \infty)$ such that, for all sufficiently large $\eta$ and all $j \geq 0$, 
    \begin{align*}
        \P\big(\tau_{(\eta, j,\ell)} \neq \tau'_{(\eta, j,\ell)} \big)
        \leq 1- \tilde C(\log\eta)^{-2}.
    \end{align*}
\end{lemma}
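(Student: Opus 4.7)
The plan is to reduce the claim to a quantitative comparison of Poisson probabilities on the line visited at the stopping time. Below I abbreviate $\tau' := \tau'_{(\eta,j,\ell)}$ and $\tau := \tau_{(\eta,j,\ell)}$, and write $N_i := \calP_{\ell,i}([B_i^\ell, A_i^\ell])$, $D_i := A_i^\ell - B_i^\ell$, and $\lambda_i := \lambda((i+j)/\ell^2)$. The event $\{\tau = \tau'\}$ is precisely $\{N_{\tau'} = 0\}$. Since $\calP_{\ell,i}$ is independent of $\mathcal{F}_i$, we have $N_i \mid \mathcal{F}_i \sim \mathrm{Poisson}(D_i\lambda_i)$, so $\P(N_i = 0 \mid \mathcal{F}_i) = e^{-D_i \lambda_i}$ and $\P(N_i \leq 1 \mid \mathcal{F}_i) = (1+D_i\lambda_i)e^{-D_i\lambda_i}$. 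On $\{D_i \leq M\}$ the ratio of these two conditional probabilities is at least $1/(1+M\lambda_{\max})$, while on $\{D_i > M\}$ the second probability itself is already very small.

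First I would condition on $\mathcal{F}_i$ in every summand of $\sum_i \P(\tau' = i, N_i = 0)$, using that $\{\tau' \geq i\} \in \mathcal{F}_i$, and restrict each summand to $\{D_i \leq M\}$. This immediately yields
\[
\P(\tau = \tau') \;\geq\; \frac{1}{1+M\lambda_{\max}}\, \P\!\left(D_{\tau'} \leq M\right),
\]
reducing the lemma to exhibiting some $M = O(\log \eta)$ for which $\P(D_{\tau'} \leq M) \geq c > 0$ uniformly in $\eta$.

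Next I would take $M := (4/\lambda_{\min})\log \eta$ and $T := \eta^4$. Exploiting that $x \mapsto (1+x\lambda_i)e^{-x\lambda_i}$ is decreasing on $[0,\infty)$, one obtains for every $i$
\[
\P(\tau' = i, D_i > M) \;\leq\; \E\!\left[\ind{\tau' \geq i}\, \ind{D_i > M}\,(1+D_i\lambda_i) e^{-D_i\lambda_i}\right] \;\leq\; (1+M\lambda_{\max})\,e^{-M\lambda_{\min}}\,\P(\tau' \geq i).
\]
Plugging in Proposition~\ref{Prop:tau_1_vert_inhom_decay} together with the trivial bound $\P(\tau' \geq i) \leq 1$ for $i \leq \eta^2$ gives $\sum_{i\leq T} \P(\tau' \geq i) = O(\eta^3)$. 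Since $e^{-M\lambda_{\min}} = \eta^{-4}$, this yields $\P(D_{\tau'} > M,\,\tau' \leq T) = O((\log \eta)/\eta)$; combined with the marginal bound $\P(\tau' > T) \leq \bar C/\eta$ from the same proposition, one gets $\P(D_{\tau'} \leq M) \geq 1/2$ for $\eta$ sufficiently large, and the previous display then produces $\P(\tau = \tau') \geq \tilde C/\log \eta \geq \tilde C/(\log \eta)^2$.

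The hard part will be conceptual rather than computational: the event $\{\tau' \geq i\}$ is positively correlated with $\{D_i \text{ large}\}$, because large $D_i$ makes $N_i \leq 1$ unlikely and therefore pushes $\tau'$ out. A direct attempt to control the conditional law of $D_{\tau'}$ given $\{\tau' \geq i\}$ would have to fight this correlation. The escape is the pointwise monotonicity of $(1+x\lambda)e^{-x\lambda}$ in $x$: it lets me drop the indicator $\ind{D_i > M}$ inside the expectation, after which only the marginal tail of $\tau'$ provided by Proposition~\ref{Prop:tau_1_vert_inhom_decay} is needed. This is also why the bound $(\log \eta)^{-2}$ stated in the lemma is not tight — the same argument in fact yields $(\log \eta)^{-1}$ — and why no joint estimate on $(D_{\tau'}, \tau')$ ever needs to be established.
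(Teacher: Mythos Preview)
Your proof is correct and shares the same two-step skeleton as the paper's: first show $D_{\tau'} \leq M$ with high probability for some threshold $M = M(\eta)$, then use $\P(N_i = 0 \mid N_i \leq 1,\, \calF_i) = 1/(1+D_i\lambda_i) \geq 1/(1+M\lambda_{\max})$ on $\{D_i \leq M\}$. The difference lies in how the first step is carried out. The paper observes that when $N_i = 1$ one has the exact identity $D_i = \underline{X}_i + \overline{Y}_i$, and then controls $D_{\tau'}$ by a union bound on the exponential step sizes $\underline{X}_i + \overline{Y}_i$ over all $i \leq \eta^4$; this is what forces the threshold $M = (\log\eta)^2$ and hence the final $(\log\eta)^{-2}$. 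You instead bound $\P(\tau' = i,\, D_i > M)$ directly via the monotonicity of $x \mapsto (1+x)e^{-x}$, never touching $\underline{X}_i, \overline{Y}_i$ at all, and thereby obtain the sharper $M = O(\log\eta)$ and the bound $(\log\eta)^{-1}$. Your route is a little more economical for this lemma in isolation; the paper's geometric identity, on the other hand, is reused in the proof of Proposition~\ref{Prop:tau0-tau1-decay-rate-vertical-homo-semi} to control $A^\ell_{\tau'_\ell+1} - B^\ell_{\tau'_\ell+1}$, so that observation earns its keep elsewhere.
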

\begin{proof}[Proof of Lemma~\ref{Lem:tau_0_is_not_tau_1}]
  Note that
  \begin{align}
     \P\big(\tau_{(\eta, j,\ell)} \neq \tau'_{(\eta, j,\ell)} \big)
     &\leq \P\Big(\tau_{(\eta, j,\ell)} \neq \tau'_{(\eta, j,\ell)}, \tau'_{(\eta, j,\ell)} \leq \eta^4, \max_{0\leq i\leq \eta^4} \underline{X}^{(\eta,j,\ell)}_i + \overline{Y}^{(\eta,j,\ell)}_i \leq (\log\eta)^2 \Big)\nonumber\\
     &\qquad+ \P\big(\tau'_{(\eta, j,\ell)} > \eta^4 \big) +\P\Big( \max_{0\leq i\leq \eta^4} \underline{X}^{(\eta,j,\ell)}_i + \overline{Y}^{(\eta,j,\ell)}_i > (\log\eta)^2 \Big).
     \label{Equ:tau_0_is_not_tau_1-decompose}
  \end{align}
  Moreover, 
  \begin{align}
  \P\Big( \max_{0\leq i\leq \eta^4} \underline{X}^{(\eta,j,\ell)}_i + \overline{Y}^{(\eta,j,\ell)}_i > (\log\eta)^2 \Big)
      &\leq\sum_{i=0}^{\lfloor \eta^{4}\rfloor}\P\big(\underline{X}^{(\eta,j,\ell)}_i >(\log\eta)^2/2 \big)+ \sum_{i=0}^{\lfloor \eta^{5}\rfloor}\P\big(\overline{Y}^{(\eta,j,\ell)}_i >(\log\eta)^2/2 \big)\nonumber\\
        &= 2(\lfloor \eta^{4}\rfloor+1) \e^{-\lambda_{\min}(\log\eta)^2/2 }=o\big((\log\eta)^{-2} \big),\qquad \text{ as $\eta\to\infty$},
        \label{Equ:tau_0_is_not_tau_1-decompose-1}
  \end{align}
and, by Proposition~\ref{Prop:tau_1_vert_inhom_decay}, for all  $\eta\geq 1$,
  \begin{align}
    \P\big(\tau'_{(\eta, j,\ell)} > \eta^4 \big) \leq  \bar{C}\eta^{-1}. 
    \label{Equ:tau_0_is_not_tau_1-decompose-2}
  \end{align}
We define
\[
N^{(\eta,j,\ell)}_i := \calP^{(j)}_i\big([B^{(\eta,j,\ell)}_i, A^{(\eta,j,\ell)}_i]\big)
\]
and recall that $\calF_0^{(j)}$ is the trivial $\sigma$-algebra, and for all $i \geq 1$, $\calF_i^{(j)}$ is the $\sigma$-algebra generated by $\{\calP_{\ell, k}^{(j)}\colon 0 \leq k \leq i - 1\}$. Therefore, $A^{(\eta,j,\ell)}_i$, $B^{(\eta,j,\ell)}_i$, and $\ind{\tau'_{(\eta, j,\ell)} \geq i}$ are all $\calF_i^{(j)}$-measurable. Hence, we have
\begin{align}
 \P&\Big(\tau_{(\eta, j,\ell)} \neq \tau'_{(\eta, j,\ell)}, \tau'_{(\eta, j,\ell)} \leq \eta^4, \max_{0\leq i\leq \eta^4} \underline{X}^{(\eta,j,\ell)}_i + \overline{Y}^{(\eta,j,\ell)}_i \leq (\log\eta)^2 \Big) \nonumber \\
 &\leq \sum_{i=0}^{\eta^4} 
 \E\Big[
 \ind{\tau'_{(\eta, j,\ell)} =i}\ind{N^{(\eta,j,\ell)}_i=1} \ind{A^{(\eta,j,\ell)}_i-B^{(\eta,j,\ell)}_i \leq (\log\eta)^2}
 \Big]\nonumber\\
 &= \sum_{i=0}^{\eta^4}  \E\Big[ \ind{\tau'_{(\eta, j,\ell)} \geq i}\ind{N^{(\eta,j,\ell)}_i=1}\ind{A^{(\eta,j,\ell)}_i-B^{(\eta,j,\ell)}_i \leq (\log\eta)^2} \Big]\nonumber\\
 &= \sum_{i=0}^{\eta^4} \E\Big[\P\big(N^{(\eta,j,\ell)}_i=1 | \calF_i^{(j)}, \ind{N^{(\eta,j,\ell)}_i\in\{0,1\}} \big) \nonumber\\
 &\qquad\qquad\qquad\qquad
 \ind{\tau'_{(\eta, j,\ell)} \geq i}\ind{N^{(\eta,j,\ell)}_i\in\{0,1\}}\ind{A^{(\eta,j,\ell)}_i-B^{(\eta,j,\ell)}_i \leq (\log\eta)^2}\Big] \nonumber\\
  &= \sum_{i=0}^{\eta^4} \E\Big[\tfrac{\lambda((i+j)/\ell^2,0)(A^{(\eta,j,\ell)}_i-B^{(\eta,j,\ell)}_i)}{1+\lambda((i+j)/\ell^2,0)(A^{(\eta,j,\ell)}_i-B^{(\eta,j,\ell)}_i)}t\ind{\tau'_{(\eta, j,\ell)} \geq i}\ind{N^{(\eta,j,\ell)}_i\in\{0,1\}}\ind{A^{(\eta,j,\ell)}_i-B^{(\eta,j,\ell)}_i \leq (\log\eta)^2}
 \Big] \nonumber\\
  &\leq \sum_{i=0}^{\eta^4} 
 \E\Big[\frac{\lambda_{\max}(\log\eta)^2}{1+\lambda_{\max}(\log\eta)^2}
 \ind{\tau'_{(\eta, j,\ell)} = i}\Big] 
 = \frac{\lambda_{\max}(\log\eta)^2}{1+\lambda_{\max}(\log\eta)^2}= 1-\frac{1}{1+\lambda_{\max}(\log\eta)^2}.
 \label{Equ:tau_0_is_not_tau_1-decompose-3}
\end{align}
Thus, combining~\eqref{Equ:tau_0_is_not_tau_1-decompose},~\eqref{Equ:tau_0_is_not_tau_1-decompose-1},~\eqref{Equ:tau_0_is_not_tau_1-decompose-2}, and~\eqref{Equ:tau_0_is_not_tau_1-decompose-3} yields the result.
\end{proof}

\begin{proof}[Proof of Proposition~\ref{Prop:tau0-tau1-decay-rate-vertical-homo-semi}]
Observe that
 \begin{align}
       \P(\tau_{\ell} - \tau'_{\ell} > \ell^{\alpha})
        &\leq 
    \E\Big[\mathbb{P}\big(\tau_{\ell} - \tau'_{\ell} > \ell^{\alpha} | \mathcal{F}_{\tau_1^{(\ell)}+1} \big)  \ind{A^\ell_{\tau'_{\ell} + 1} - B^\ell_{\tau'_{\ell} + 1} \leq (\log\ell)^2} \Big] 
        + \P\big(A^\ell_{\tau'_{\ell} + 1} - B^\ell_{\tau'_{\ell} + 1} > (\log\ell)^2\big) \nonumber\\
    &= \E\Big[\P\big(\tau_{(\eta,j,\ell)}+1 > \ell^{\alpha} \big)\big|_{\substack{\eta = A^\ell_{\tau'_{\ell} + 1}- B^\ell_{\tau'_{\ell} + 1} ,\,\,j = \tau'_{\ell}+1}}  
     \ind{A^\ell_{\tau'_{\ell} + 1} - B^\ell_{\tau'_{\ell} + 1} \leq (\log\ell)^2} \Big] \nonumber\\
        &\qquad\qquad+ \P\big(\underline{X}^\ell_{\tau'_{\ell}} + \overline{Y}^\ell_{\tau'_{\ell}} > (\log\ell)^2\big) \nonumber \\
   &\leq \E\Big[\P\big(\tau_{((\log\ell)^2,j,\ell)}+1 > \ell^{\alpha} \big)\big|_{\substack{j = \tau'_{\ell}+1}}\Big] 
   + \P\big(\underline{X}^\ell_{\tau'_{\ell}} + \overline{Y}^\ell_{\tau'_{\ell}} > (\log\ell)^2\big).
     \label{Equ:tail-bound-for-tau0-tau1}
    \end{align}
Here, in the last inequality, we use the fact that $\tau'_{(\eta,j,\ell)}$ is increasing in $\eta$. To bound the right-hand side of~\eqref{Equ:tail-bound-for-tau0-tau1}, we now carry out some technical preparation.
Denote $\tau'^{(0)}_{(\eta,j,\ell)} := -1$ and $\tau'^{(1)}_{(\eta,j,\ell)} := \tau'_{(\eta,j,\ell)}$, and, for all $m \geq 2$, define
\[
        \tau'^{(m)}_{(\eta,j,\ell)}
        := \inf\big\{i > \tau'^{(m-1)}_{(\eta,j,\ell)} \colon \calP_{\ell, i}([B_i, A_i])\in \{0,1\}\big\}.
\] 
    Observe that, by the definition of the filtration $\{\mathcal{F}^{(j)}_i\colon i \geq 0\}$, the random times $(\tau_{(\eta,j,\ell)} + 1)$, $(\tau'_{(\eta,j,\ell)} + 1)$, and $(\tau'^{(m)}_{(\eta,j,\ell)} + 1)$ are stopping times with respect to this filtration.
Then, we have
    \begin{align}
        \P\big(\tau_{(\eta,j,\ell)}+1 > \ell^{\alpha}\big) 
        &\leq G_1+G_2+G_3,
        \label{Equ:tail-of-tau0-eta-j-l}
    \end{align}
where
\begin{align*}
G_1&:=\P\Big(\big(\tau_{(\eta,j,\ell)}+1\big) > \ell^{\alpha}, \big( \tau'^{(\lfloor \log \ell\rfloor^4)}_{(\eta,j,\ell)} +1\big)\leq \ell^{\alpha}, \mathscr{E}_{\alpha}^{(\eta,j,\ell)}\Big),\\
    G_2&:= \P\Big(\big(\tau'^{(\lfloor \log \ell\rfloor^4)}_{(\eta,j,\ell)} +1\big)> \ell^{\alpha}, \mathscr{E}_{\alpha}^{(\eta,j,\ell)}\Big),\\
    G_3&:= \P\Big({\mathscr{E}_{\alpha}^{(\eta,j,\ell)}}^{\complement}\Big), and\\
    \mathscr{E}_{\alpha}^{(\eta,j,\ell)}&:=\Big\{ \max_{0 \leq i \leq \ell^{\alpha+1}} \big(\overline{X}^{(\eta,j,\ell)}_i \vee \underline{X}^{(\eta,j,\ell)}_i \vee \overline{Y}^{(\eta,j,\ell)}_i \vee \underline{Y}^{(\eta,j,\ell)}_i\big) \leq (\log\ell)^2/4 \Big\}.
\end{align*}
We also write $\mathscr{E}_{\alpha}^{(\ell)} \equiv \mathscr{E}_{\alpha}^{(\ell,0,\ell)}$. 
We now proceed to bound the right-hand side of~\eqref{Equ:tail-of-tau0-eta-j-l} in several steps.

\medskip
\noindent
\emph{Step (1): Bounding the third term.}
\begin{align}  G_3=\P\Big({\mathscr{E}_{\alpha}^{(\eta,j,\ell)}}^{\complement}\Big)
&\leq 4 \big(\lfloor \ell^{\alpha+1}\rfloor +1\big) \e^{-\lambda_{\min}(\log\ell)^2/4}
        \leq \bar{\mathsf{C}}_3\ell^{\alpha+1} \e^{-\bar{\mathsf{C}}'_3(\log\ell)^2 },\quad\text{ for some }\bar{\mathsf{C}}_3, \bar{\mathsf{C}}'_3\in(0,\infty).
        \label{Equ:tail-of-tau0-eta-j-l-3}
\end{align}
   
\medskip
\noindent
\emph{Step (2): Bounding the second term.}
\begin{align*}
   G_2 &= \P\Big(\big(\tau'^{(\lfloor \log \ell\rfloor^4)}_{(\eta,j,\ell)} +1\big)> \ell^{\alpha}, \mathscr{E}_{\alpha}^{(\eta,j,\ell)}\Big)\\
   &\leq \sum_{m=1}^{\lfloor \log \ell\rfloor^4} \P\Big(\tau'^{(m)}_{(\eta,j,\ell)}-\tau'^{(m-1)}_{(\eta,j,\ell)} > \frac{\ell^{\alpha}}{\lfloor \log \ell\rfloor^4},  \big(\tau'^{(m-1)}_{(\eta,j,\ell)} +1\big)\leq\ell^{\alpha},\mathscr{E}_{\alpha}^{(\eta,j,\ell)} \Big)\\
   & \leq  \P\Big(\tau'^{(1)}_{(\eta,j,\ell)}+1 > \frac{\ell^{\alpha}}{\lfloor \log \ell\rfloor^4} \Big)\\
   &\qquad
   +\sum_{m=2}^{\lfloor \log \ell\rfloor^4} \E\Big[\P\Big(\tau'^{(m)}_{(\eta,j,\ell)}-\tau'^{(m-1)}_{(\eta,j,\ell)} > \frac{\ell^{\alpha}}{\lfloor \log \ell\rfloor^4} \Big| \calF_{\tau'^{(m-1)}_{(\eta,j,\ell)}+1} \Big)\ind{A^{(\eta,j,\ell)}_{\tau'^{(m-1)} + 1} - B^{(\eta,j,\ell)}_{\tau'^{(m-1)} + 1} \leq (\log\ell)^2} \Big]\\
   &\leq  \P\Big(\tau'_{(\eta,j,\ell)}+1 > \frac{\ell^{\alpha}}{\lfloor \log \ell\rfloor^4} \Big)\\
   &\qquad
   +\sum_{m=2}^{\lfloor \log \ell\rfloor^4} \E\Big[\P\Big(\tau_{1}^{(\eta',j,\ell)}+1 > \frac{\ell^{\alpha}}{\lfloor \log \ell\rfloor^4}  \Big)\Big|_{\substack{\eta' = A^{(\eta,j,\ell)}_{\tau'^{(m-1)} + 1} - B^{(\eta,j,\ell)}_{\tau'^{(m-1)} + 1}, \,\,j = \tau'^{(m-1)}_{(\eta,j,\ell)}+1}}\\
   &\qquad\qquad\qquad\qquad
   \qquad\qquad\qquad\qquad\qquad\qquad\qquad\qquad
   \ind{A^{(\eta,j,\ell)}_{\tau'^{(m-1)} + 1} - B^{(\eta,j,\ell)}_{\tau'^{(m-1)} + 1} \leq (\log\ell)^2} \Big].
\end{align*}
Here, to improve readability, we write $A^{(\eta,j,\ell)}_{\tau'^{(m-1)}_{(\eta,j,\ell)}+1}$ as $A^{(\eta,j,\ell)}_{\tau'^{(m-1)} + 1}$ and $B^{(\eta,j,\ell)}_{\tau'^{(m-1)}_{(\eta,j,\ell)}+1}$ as $B^{(\eta,j,\ell)}_{\tau'^{(m-1)} + 1}$. Observing that $\tau'_{(\eta',j,\ell)}$ is increasing in $\eta'$ and invoking Proposition~\ref{Prop:tau_1_vert_inhom_decay}, we deduce that
\begin{align}
  G_2\leq \bar{\mathsf{C}}_2\ell^{-\alpha/2}\big( \eta+(\log\ell)^6 \big)(\log\ell)^2,
  \label{Equ:tail-of-tau0-eta-j-l-2}
\end{align}
    for some $\bar{\mathsf{C}}_2\in(0,\infty)$.

\medskip
\noindent
\emph{Step (3): Bounding the first term.}
Recall that
\(
N^{(\eta,j,\ell)}_i := \calP^{(j)}_{\ell, i}\big([B^{(\eta,j,\ell)}_i, A^{(\eta,j,\ell)}_i]\big)
\). Then,
\begin{align*}
    G_1 &=\P\Big(\tau_{(\eta,j,\ell)}+1 > \ell^{\alpha},  \tau'^{(\lfloor \log \ell\rfloor^4)}_{(\eta,j,\ell)} +1\leq \ell^{\alpha}, \mathscr{E}_{\alpha}^{(\eta,j,\ell)}\Big)\\
    &\leq \P\Big(  \forall 1\leq m \leq \lfloor \log \ell\rfloor^4 , \tau'^{(m)}_{(\eta,j,\ell)} \neq \tau_{(\eta,j,\ell)}; A^{(\eta,j,\ell)}_{\tau'^{(m)} + 1}- B^{(\eta,j,\ell)}_{\tau'^{(m)} + 1} \leq (\log\ell)^2\Big)\\
    &\leq \P\Big( N^{(\eta,j,\ell)}_{\tau'^{(\lfloor \log \ell\rfloor^4)}_{(\eta,j,\ell)} +1}\neq 0; \forall 1\leq m \leq \lfloor \log \ell\rfloor^4 -1, \tau'^{(m)}_{(\eta,j,\ell)} \neq \tau_{(\eta,j,\ell)}; A^{(\eta,j,\ell)}_{\tau'^{(m)} + 1}- B^{(\eta,j,\ell)}_{\tau'^{(m)} + 1} \leq (\log\ell)^2\Big)\\
    &=\E\Big[\P\Big(N^{(\eta,j,\ell)}_{\tau'^{(\lfloor \log \ell\rfloor^4)}_{(\eta,j,\ell)} +1}\neq 0\big| \calF_{\tau'^{(\lfloor \log \ell\rfloor^4-1)}_{(\eta,j,\ell)} +1} \Big)\\
    &\qquad\qquad\ind{\forall 1\leq m \leq \lfloor \log \ell\rfloor^4 -1, \tau'^{(m)}_{(\eta,j,\ell)} \neq \tau_{(\eta,j,\ell)};  A^{(\eta,j,\ell)}_{\tau'^{(m)} + 1}- B^{(\eta,j,\ell)}_{\tau'^{(m)} + 1} \leq (\log\ell)^2}\Big]
    \displaybreak[0]\\
    &=\E\Big[\P\Big(\tau'_{(\eta',j',\ell)}
    \neq \tau_{(\eta',j',\ell)} \Big)\Big|_{\substack{\eta' = A^{(\eta,j,\ell)}_{\tau'^{(\lfloor \log \ell\rfloor^4-1)}+1}- B^{\eta,j,\ell}_{\tau'^{(\lfloor \log \ell\rfloor^4-1)} + 1},\,\, j' = \tau'^{(\lfloor \log \ell\rfloor^4-1)}_{(\eta,j,\ell)} +1}}\\
    &\qquad\qquad \ind{\forall 1\leq m \leq \lfloor \log \ell\rfloor^4 -1, \tau'^{(m)}_{(\eta,j,\ell)} \neq \tau_{(\eta,j,\ell)}; A^{(\eta,j,\ell)}_{\tau'^{(m)} + 1}- B^{(\eta,j,\ell)}_{\tau'^{(m)} + 1} \leq (\log\ell)^2}\Big] \displaybreak[0]\\
    &\leq \Big(1- \tfrac{\tilde C}{(2\log\log\ell)^2}\Big) \P\Big(\forall 1\leq m \leq \lfloor \log \ell\rfloor^4 -1, \tau'^{(m)}_{(\eta,j,\ell)} \neq \tau_{(\eta,j,\ell)}; A^{(\eta,j,\ell)}_{\tau'^{(m)} + 1}- B^{(\eta,j,\ell)}_{\tau'^{(m)} + 1} \leq (\log\ell)^2 \Big).
\end{align*}
Here, in the last inequality we used Lemma~\ref{Lem:tau_0_is_not_tau_1}. Iterating this inequality, we obtain
\begin{align}
    G_1 \leq \Big(1- \tfrac{\tilde C}{(2\log\log\ell)^2}\Big)^{\lfloor \log \ell\rfloor^4} < \e^{-\bar{\mathsf{C}}_1(\log\ell)^2 },\qquad \text{for some }\bar{\mathsf{C}}_1\in(0,\infty).
    \label{Equ:tail-of-tau0-eta-j-l-1}
\end{align}

Combining~\eqref{Equ:tail-of-tau0-eta-j-l},~\eqref{Equ:tail-of-tau0-eta-j-l-3},~\eqref{Equ:tail-of-tau0-eta-j-l-2}, and~\eqref{Equ:tail-of-tau0-eta-j-l-1}, and setting $\eta = (\log \ell)^2$, we conclude that there exists a constant $\bar{\mathsf{C}}_4 \in (0, \infty)$ such that the first term on the right-hand side of~\eqref{Equ:tail-bound-for-tau0-tau1} is bounded from above by
\[
\bar{\mathsf{C}}_4 \, \ell^{-\alpha/2} (\log \ell)^8.
\]
Furthermore, by Proposition~\ref{Prop:tau_1_vert_inhom_decay} and~\eqref{Equ:tail-of-tau0-eta-j-l-3}, there exists a constant $\bar{\mathsf{C}}_5 \in (0, \infty)$ such that the second term is bounded from above by
\[
\P(\tau'_{\ell} > \ell^{2\alpha + 2}) + \P\big(\mathscr{E}_{2\alpha+2}^{(\ell)\, \complement}\big) \leq \bar{\mathsf{C}}_5 \, \ell^{-\alpha}.
\]
Combining these two bounds yields the desired result.  
\end{proof}


\subsection{Proofs for vertically inhomogeneous semi lattices}\label{Sec:Inhom}
In this section we consider the case where additionally the vertical direction may feature inhomogeneities. As mentioned above, the key challenge is to control the spatial dependencies arising from these inhomogeneities. In essence, we overcome these challenges by coupling the homogeneous and inhomogeneous models and using the asymptotic flattening in the intensity $\lambda$ and traffic generation function $\mu$. Again, to ease the exposition, we only consider the case where $\x=\o$, the general case follows then without effort. 


\subsubsection{Bounding random walks on $\mathbb{S}$ with vertical inhomogeneities}\label{sec_prelim_coupling_vert_inhom}
For the vertically inhomogeneous model, we use bold sans-serif symbols for notation. As in the vertically homogeneous model, we set $\A_0^\ell := \ell/2$ and $\B_0^\ell := -\ell/2$, and define recursively, for each $i \geq 0$,
\begin{align*}
    \overline{\U}^\ell_i &:= \inf\{y > \A^\ell_i \colon y \in \boldsymbol{\calP}_{\ell,i} \}, &
    \overline{\L}^\ell_i &:= \inf\{y \geq \B^\ell_i \colon y \in \boldsymbol{\calP}_{\ell,i} \}, \\
    \underline{\U}^\ell_i &:= \sup\{y \leq \A^\ell_i \colon y \in \boldsymbol{\calP}_{\ell,i} \}, &
    \underline{\L}^\ell_i &:= \sup\{y < \B^\ell_i \colon y \in \boldsymbol{\calP}_{\ell,i} \}, \\
    \A^\ell_{i+1} &:= ( \overline{\U}^\ell_i +\underline{\U}^\ell_i )/2, &
    \B^\ell_{i+1} &:= ( \overline{\L}^\ell_i + \underline{\L}^\ell_i )/2.
\end{align*}
Again, we define
\[
    \overline{\X}^\ell_i := \overline{\U}^\ell_i- \A^\ell_i, \qquad \underline{\X}^\ell_i :=  \A^\ell_i - \underline{\U}^\ell_i, \qquad \overline{\Y}^\ell_i := \overline{\L}^\ell_i-  \B^\ell_i , \qquad    \underline{\Y}^\ell_i := \B^\ell_i - \underline{\L}^\ell_i,
\]
and observe that
\[
    \A^\ell_{i+1}-\A^\ell_i= (\overline{\X}^\ell_i-\underline{\X}^\ell_i)/2
    \qquad\text{ and }\qquad
    \B^\ell_{i+1}-\B^\ell_i= (\overline{\Y}^\ell_i-\underline{\Y}^\ell_i)/2.
\]

Note that the processes $\{\A^\ell_i\colon i\geq 0\}$ and $\{\B^\ell_i\colon i\geq 0\}$ are sums of dependent random variables. However, if $\{\boldsymbol{\calP}_{\ell,i}\colon i\geq0\}$ are vertically homogeneous, then the increments of $\{\A^\ell_i\colon i\geq 0\}$ and $\{\B^\ell_i\colon i\geq 0\}$ depend only on the index $i$, and not on the past trajectories of the processes. In this case, each of the two processes becomes a sum of independent random variables, which makes them significantly easier to analyze. To facilitate the study of $\{\A^\ell_i\colon i\geq 0\}$ and $\{\B^\ell_i\colon i\geq 0\}$ in the inhomogeneous setting, we compare them to their vertically homogeneous counterparts via a suitable coupling.

In the vertically homogeneous model, we use standard math fonts for the symbols, and the quantities $\calP_{\ell,i}$, $A^\ell_i$, $B^\ell_i$, $\overline{U}^\ell_i$, $\underline{U}^\ell_i$, $\overline{L}^\ell_i$, $\underline{L}^\ell_i$, $\overline{X}^\ell_i$, $\underline{X}^\ell_i$, $\overline{Y}^\ell_i$, and $\underline{Y}^\ell_i$ are defined analogously.


\subsubsection{Coupling of vertically homogeneous and inhomogeneous models}
Let $\{E_{1,i}\colon i \geq 0\}$, $\{E_{2,i}\colon i \geq 0\}$, $\{E_{3,i}\colon i \geq 0\}$, and $\{E_{4,i}\colon i \geq 0\}$ be four independent sequences of i.i.d.\ $\Exp(\lambda_{\min})$ random variables. We use $E$ to denote a generic random variable with the $\Exp(\lambda_{\min})$ distribution. Then, we define
\[
\tilde\tau_\ell:= \inf\Big\{ i\geq 0 \colon \big(\A^\ell_i - \B^\ell_i \big)\wedge \big( A^\ell_i - B^\ell_i \big) < E_{2,i} +E_{3,i}  \Big\}.
\]
We  couple the processes $\{\boldsymbol{\calP}_{\ell, i}\colon i\geq 0\}$ and $\{\calP_{\ell, i}\colon i\geq 0\}$ as follows.
\begin{itemize}
    \item \textbf{\underline{Auxiliary processes.}} Let $\{\calQ_{j,i}\colon i \geq 0, j=1,2,\ldots, 7\}$ be seven sequences of independent Poisson point processes on $\R$, which are mutually independent and also independent of the sequences of exponential random variables $\{E_{j,i}\colon i \geq 0,  j=1,2,3,4\}$. For each $i \geq 0$, the corresponding point processes have the following intensities.
    \begin{alignat*}{2}
        \nu_{1,i}(s) &:= \lambda(i/\ell^2, s/\ell^2) \wedge \lambda(i/\ell^2, 0) - \lambda_{\min},& \\
        \nu_{2,i}(s) &:= \big( \lambda(i/\ell^2, s/\ell^2) - \lambda(i/\ell^2, 0) \big)_+, &\\
        \nu_{3,i}(s) &:= \big( \lambda(i/\ell^2, 0) - \lambda(i/\ell^2, s/\ell^2) \big)_+, &\\
        \nu_{4,i}(s) &:= \lambda(i/\ell^2, s/\ell^2), 
        &\nu_{5,i}(s) := \lambda(i/\ell^2, 0), \\
        \nu_{6,i}(s) &:= \lambda(i/\ell^2, s/\ell^2),
        &\nu_{7,i}(s) := \lambda(i/\ell^2, 0).
    \end{alignat*}
    
    \item \textbf{\underline{Coupling for $i < \tilde\tau_\ell$: region of interaction.}} 
    We define the following intervals.
    \begin{align*}
        I_i^{\A} &:= [\A^\ell_i - E_{2,i},\ \A^\ell_i + E_{1,i}], &
        I_i^A &:= [A^\ell_i - E_{2,i},\ A^\ell_i + E_{1,i}], \\
        I_i^{\B} &:= [\B^\ell_i - E_{4,i},\ \B^\ell_i + E_{3,i}], &
        I_i^B &:= [B^\ell_i - E_{4,i},\ B^\ell_i + E_{3,i}].
    \end{align*}
    For $i<\tilde\tau_\ell$, we couple $\boldsymbol{\calP}_{\ell, i}$ on the set $I_i^{\A} \cup I_i^{\B}$ with $\calP_{\ell, i}$ on the set $I_i^{A} \cup I_i^{B}$ as follows.
        \begin{align*}
        \boldsymbol{\calP}_{\ell, i} \cap (I_i^{\A} \cup I_i^{\B} ) 
        & = \big((\calQ_{1,i} \cup \calQ_{2,i}) \cap (I_i^{\A} \cup I_i^{\B} ) \big)\cup \{\A^\ell_i - E_{2,i}, \A^\ell_i + E_{1,i}, \B^\ell_i - E_{4,i}, \B^\ell_i + E_{3,i}\},
    \end{align*}
    and
    \begin{align*}
        \calP_{\ell, i} \cap (I_i^{A} \cup I_i^{B}) 
        & = \Big(\big((\calQ_{1,i} \cup \calQ_{3,i}) \cap I_i^{\A}  \big) - (\A^\ell_i - A^\ell_i) \Big)\cup \Big(\big((\calQ_{1,i} \cup \calQ_{3,i}) \cap I_i^{\B} \big) - (\B^\ell_i - B^\ell_i) \Big) \\
        &\qquad \cup \{A^\ell_i - E_{2,i}, A^\ell_i + E_{1,i}, B^\ell_i - E_{4,i}, B^\ell_i + E_{3,i}\}.
    \end{align*}
    
    \item \textbf{\underline{Coupling for $i < \tilde\tau_\ell$: outside region of interaction.}} For $i<\tilde\tau_\ell$, outside the interaction
regions defined above, the point processes are defined independently as
    \begin{align*}
        \boldsymbol{\calP}_{\ell, i} \cap (I_i^{\A} \cup I_i^{\B} )^{\complement}  = \calQ_{4,i} \cap (I_i^{\A} \cup I_i^{\B} )^{\complement},
        \qquad
        \text{ and }
        \qquad
        \calP_{\ell, i} \cap (I_i^{A} \cup I_i^{B})^{\complement}  = \calQ_{5,i} \cap (I_i^{A} \cup I_i^{B})^{\complement}.
    \end{align*}
    
    \item \textbf{\underline{Coupling for $i \geq \tilde\tau_\ell$.}} For $i\geq \tilde\tau_\ell$, we simply set $\boldsymbol{\calP}_{\ell, i} = \calQ_{6,i}$ and $\calP_{\ell, i} =  \calQ_{7,i}$.
\end{itemize}

\begin{remark}
\label{Rem:measurability-and-independence-of-tau_C}
 Note that, under this coupling, the random time $\tilde\tau_\ell$ is measurable with respect to the $\sigma$-algebra generated by the point processes $\{\calQ_{j,i}\colon i \geq 0,\, j = 1,\ldots,5\}$ and the exponential random variables $\{E_{j,i}\colon i \geq 0,\, j = 1,\ldots,4\}$. Moreover, these are independent of the point processes $\{\calQ_{j,i}\colon i \geq 0,\, j = 6,7\}$.
\end{remark}

\begin{remark}
\label{Extension-of-initial-coupling}
Also note that the initial coupling is only defined for $i < \tilde\tau_\ell$, since for $i \geq \tilde\tau_\ell$, the regions associated with $\{\A^\ell_i\colon i \geq \tilde\tau_\ell\}$ and $\{\B^\ell_i\colon i \geq \tilde\tau_\ell\}$, or with $\{A^\ell_i\colon i \geq \tilde\tau_\ell\}$ and $\{B^\ell_i\colon i \geq \tilde\tau_\ell\}$, may overlap. This overlap makes it impossible to simultaneously apply the initial coupling to both the regions near $\{\A^\ell_i\colon i \geq \tilde\tau_\ell\}$ and $\{A^\ell_i\colon i \geq \tilde\tau_\ell\}$ and those near $\{\B^\ell_i\colon i \geq \tilde\tau_\ell\}$ and $\{B^\ell_i\colon i \geq \tilde\tau_\ell\}$. However, when needed, the initial coupling can still be extended beyond $i \geq \tilde\tau_\ell$ to either the regions near $\{\A^\ell_i\colon i \geq \tilde\tau_\ell\}$ and $\{A^\ell_i\colon i \geq \tilde\tau_\ell\}$, or the regions near $\{\B^\ell_i\colon i \geq \tilde\tau_\ell\}$ and $\{B^\ell_i\colon i \geq \tilde\tau_\ell\}$, but not both simultaneously.  
\end{remark}


\subsubsection{Proofs for times up to $\tilde\tau_\ell$}
\begin{prop}
\label{Prop:Error_upto_tauCoupling} 
Under the above coupling, for any $t > 0$, there exists a constant $\mathfrak{C}_t < \infty$ such that, for every $\epsilon > 0$, $\alpha \in \mathbb{R}$, and $\ell \geq 1$,
\[
\epsilon^2\ell^{2\alpha-1} 
\P\Big( \sup_{0\leq i\leq \tilde\tau_\ell} \big| \A^\ell_i- A^\ell_i \big| > \epsilon \ell^\alpha, \tilde\tau_\ell \leq \ell^2 t \Big)
\leq \mathfrak{C}_t,
\]
and
\[
\epsilon^2\ell^{2\alpha-1} 
\P\Big( \sup_{0\leq i\leq \tilde\tau_\ell} \big| \B^\ell_i- B^\ell_i \big| > \epsilon \ell^\alpha, \tilde\tau_\ell \leq \ell^2 t \Big)
\leq \mathfrak{C}_t.
\]
\end{prop}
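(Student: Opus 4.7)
The plan is to show that $\Delta_i^\A := \A^\ell_i - A^\ell_i$, which starts at zero, accumulates diffusively on the scale $\sqrt{\ell}$ up to $\tilde\tau_\ell \wedge \ell^2 t$, which then gives the claim via Chebyshev. From the defining recursions one reads off
\[
\Delta_{i+1}^\A - \Delta_i^\A = \tfrac{1}{2}\bigl[(\overline{\X}^\ell_i - \overline{X}^\ell_i) - (\underline{\X}^\ell_i - \underline{X}^\ell_i)\bigr],
\]
and inspection of the coupling shows that on $\{i < \tilde\tau_\ell\}$ this increment is non-zero only if a point of $\calQ_{2,i}$ or $\calQ_{3,i}$ lands inside the window $[\A^\ell_i - E_{2,i},\A^\ell_i + E_{1,i}]$ closer to $\A^\ell_i$ than the first point of $\calQ_{1,i}$ on the relevant side. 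As a preparatory reduction, the first step is to restrict to the high-probability event $G_\ell^K := \{\sup_{0 \le i \le \ell^2 t}(|\A^\ell_i| \vee |A^\ell_i|) \le K\ell\}$, whose complement has probability $O(1/K^2)$ by Doob's maximal inequality applied to the homogeneous walk $A^\ell_i - \ell/2$ (centred i.i.d.\ increments with $O(1)$ variance), together with a bootstrap for $\A^\ell$ via $\A^\ell = A^\ell + \Delta^\A$.

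On $G_\ell^K$, the Lipschitz hypothesis on $\lambda$ implies $\nu_{2,i}(s) \vee \nu_{3,i}(s) \le \Xi K/\ell$ throughout the relevant range of $s$. Hence, conditional on $\calF_i$, the probability that the increment is non-zero is $O(1/\ell)$ while its absolute size is bounded by $\max_j E_{j,i}$, which has $O(1)$ second moment. Combined, this gives the crucial step-wise estimate
\[
\E\bigl[(\Delta_{i+1}^\A - \Delta_i^\A)^2 \,\ind{i < \tilde\tau_\ell}\, \ind{G_\ell^K}\,\big|\, \calF_i\bigr] \le \frac{C_K}{\ell},
\]
so the cumulative conditional second-moment up to step $\ell^2 t$ is $O(\ell t)$. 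Remark~\ref{Rem:measurability-and-independence-of-tau_C} ensures that $\tilde\tau_\ell$ is a stopping time of the coupling filtration, justifying the stopping arguments below.

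The main obstacle will be controlling the drift. Writing $D_i := \E[\Delta_{i+1}^\A - \Delta_i^\A \mid \calF_i]$, a first-order estimate only yields $|\E[\overline{\X}^\ell_i - \overline{X}^\ell_i \mid \calF_i]| = O(|\A^\ell_i|/\ell^2) = O(1/\ell)$ on $G_\ell^K$, and the same for the downward contribution, which summed over $\ell^2 t$ steps would produce a total drift of order $\ell$, too large to beat $\epsilon\ell^\alpha$ near $\alpha = 1/2$. The cancellation that saves the day comes from Taylor expanding the integrated intensity difference
\[
\int_{\A^\ell_i}^{\A^\ell_i+s}\bigl[\lambda(i/\ell^2, u/\ell^2) - \lambda(i/\ell^2,0)\bigr]\,\dd u \approx \partial_2\lambda \cdot s\,\A^\ell_i/\ell^2,
\]
and observing that the \emph{same} factor of $\A^\ell_i$ appears, to this order, in both the upward and downward contributions. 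These cancel in the signed combination $(\overline{\X}^\ell_i - \overline{X}^\ell_i) - (\underline{\X}^\ell_i - \underline{X}^\ell_i)$, leaving only a remainder of size $O(s^2/\ell^2)$ per step; together with a similar control on the difference between upper- and lower-directed $\calQ_{1,i}$-survival probabilities, this gives $|D_i| \le C_K/\ell^2$ on $G_\ell^K$, and hence a total drift of order $t$, which is negligible compared to $\epsilon\ell^\alpha$ for any $\alpha > 0$.

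With both ingredients in place, Doob's $L^2$-maximal inequality applied to the Doob martingale $M_i := \Delta_i^\A - \sum_{j<i} D_j$ stopped at $i \wedge \tilde\tau_\ell \wedge \lfloor\ell^2 t\rfloor$ gives
\[
\E\Bigl[\sup_{0 \le i \le \tilde\tau_\ell \wedge \ell^2 t} M_i^2 \,\ind{G_\ell^K}\Bigr] \le 4\,\E\bigl[M_{\tilde\tau_\ell \wedge \ell^2 t}^2\, \ind{G_\ell^K}\bigr] \le C_{K,t}\,\ell.
\]
Combined with the drift bound $|\Delta_i^\A - M_i| = O(t)$ on $G_\ell^K$ and Chebyshev, this yields $\P(\sup_{i \le \tilde\tau_\ell \wedge \ell^2 t} |\Delta_i^\A| > \epsilon\ell^\alpha, G_\ell^K) \le C_{K,t}/(\epsilon^2\ell^{2\alpha-1})$. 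Adding $\P((G_\ell^K)^c) = O(1/K^2)$ and first choosing $K$ large enough (absorbing the remainder into the constant $\mathfrak{C}_t$) proves the bound for $\Delta^\A$, and the argument for $\B^\ell_i - B^\ell_i$ is completely symmetric.
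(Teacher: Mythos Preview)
Your overall strategy---Doob's maximal inequality with an $O(1/\ell)$ per-step conditional second moment and an $O(1/\ell^2)$ conditional drift---matches the paper's, but the truncation to $G_\ell^K$ creates a genuine gap. The proposition demands a bound uniform in $\epsilon,\alpha,\ell$, and your final estimate reads
\[
\epsilon^2\ell^{2\alpha-1}\,\P(\cdot)\;\le\;C_{K,t}\;+\;\epsilon^2\ell^{2\alpha-1}\,\P\bigl((G_\ell^K)^c\bigr).
\]
On $G_\ell^K$ the intensities $\nu_{2,i},\nu_{3,i}$ are $O(K/\ell)$, so your per-step bound is $C_K/\ell$ with $C_K\asymp K$, giving $C_{K,t}\asymp Kt$; meanwhile $\P((G_\ell^K)^c)=O(1/K^2)$. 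No choice of $K$---fixed or $\ell$-dependent---makes the sum uniformly bounded (optimizing yields $\asymp(\epsilon^2\ell^{2\alpha-1})^{1/3}$). The ``bootstrap for $\A^\ell$ via $\A^\ell=A^\ell+\Delta^\A$'' is also circular: you invoke control of $\Delta^\A$ to bound $|\A^\ell|$, which is precisely what you are trying to establish.

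The paper avoids both problems by replacing the hard truncation with second-moment control. The drift bound $|\E[\A^\ell_{i+1}-\A^\ell_i\mid\calF_i]|\le\Xi/(\ell^2\lambda_{\min}^3)$ (your $O(1/\ell^2)$ claim, isolated as Lemma~\ref{Lem:Bound_for_expected_increments}) holds \emph{unconditionally}: it follows from the Lipschitz symmetry of $\lambda(i/\ell^2,\cdot)$ around $\A^\ell_i/\ell^2$ and needs no a~priori bound on $|\A^\ell_i|$. Feeding this into a recursion for $\E[(\A^\ell_k)^2]$ yields $\E[(\A^\ell_k)^2]\le C_t\ell^2$ for all $k\le\ell^2 t$; with $\E[|\A^\ell_k|]\le\sqrt{C_t}\,\ell$ available \emph{inside} expectations, one gets $\E[(\overline{\X}^\ell_k-\overline{X}^\ell_k)_+^2]\le C''_t/\ell$ without truncating. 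Doob's inequality applied to the submartingale $\Phi^\ell_i:=(\A^\ell_i-A^\ell_i)^2+\sum_{j\le i}\bigl|\E[(\A^\ell_j-A^\ell_j)^2-(\A^\ell_{j-1}-A^\ell_{j-1})^2\mid\calF_{j-1}]\bigr|$ then gives $\E[\Phi^\ell_{\lfloor\ell^2 t\rfloor}]\le 2C'''_t\ell t$ with no residual term. Incidentally, your drift-cancellation argument is more elaborate than needed: since $A^\ell$ is already a martingale, the drift of $\Delta^\A$ equals the drift of $\A^\ell$ alone, and the coupling plays no role in that step.
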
 
\begin{proof}[Proof of  Proposition~\ref{Prop:Error_upto_tauCoupling}]
We will prove the first equation, the second follows by a similar argument. To proceed, we extend  the initial coupling, originally defined only for $i < \tilde\tau_\ell$, beyond $\tilde\tau_\ell$ by coupling the regions around $\{\A^\ell_i\colon i \geq \tilde\tau_\ell\}$ with those around $\{A^\ell_i\colon i \geq \tilde\tau_\ell\}$, as justified by Remark~\ref{Extension-of-initial-coupling}.

We now define a non-negative submartingale
   \begin{align*}
        \Phi^\ell_i:= \big( \A^\ell_i- A^\ell_i \big)^2 +\sum_{j=1}^i \Big| \E\Big[ \big( \A^\ell_j- A^\ell_j \big)^2 - \big( \A^\ell_{j-1}- A^\ell_{j-1} \big)^2 | \calF_{j-1} \Big] \Big|,
    \end{align*}
    where $\calF_0$ is trivial and $\calF_i$ is the $\sigma$-algebra generated by $\{\boldsymbol{\calP}_{\ell, k}\colon 0\leq k \leq i-1\}$ and $\{\calP_{\ell, k}\colon 0\leq k \leq i-1\}$ that makes the random variables $\{\A^\ell_k\colon 0\leq k \leq i\}$ and $\{A^\ell_k\colon 0\leq k \leq i\}$ measurable. 
    Now, by applying Doob's maximal inequality, we obtain
\begin{align}
    \P\Big( \sup_{0\leq i\leq \lfloor \ell^2 t \rfloor } \big| \A^\ell_i- A^\ell_i \big| > \epsilon \ell^{\alpha} \Big)
    \leq \P\Big( \sup_{0\leq i\leq \lfloor \ell^2 t \rfloor} \Phi^\ell_i > \epsilon^2 \ell^{2\alpha} \Big)
    \leq \frac{1}{\epsilon^2 \ell^{2\alpha}} \E\big[ \Phi^\ell_{\lfloor \ell^2 t \rfloor} \big].
    \label{Equ:Error_upto_tauCoupling<ell^2t}
\end{align}
 To calculate this, we observe that
    \begin{align*}
       \E&\Big[ \big( \A^\ell_j- A^\ell_j \big)^2 - \big( \A^\ell_{j-1}- A^\ell_{j-1} \big)^2 | \calF_{j-1} \Big] \\
       &= 2\big( \A^\ell_{j-1}- A^\ell_{j-1} \big)
       \E\Big[ \big( \A^\ell_j- \A^\ell_{j-1} \big) - \big( A^\ell_j- A^\ell_{j-1} \big) | \calF_{j-1} \Big]+ \E\Big[ \Big(\big( \A^\ell_j- \A^\ell_{j-1} \big) - \big(  A^\ell_j- A^\ell_{j-1}\big) \Big)^2 | \calF_{j-1} \Big] \\
       &= 2\big( \A^\ell_{j-1}- A^\ell_{j-1} \big)
       \E\big[ \A^\ell_j- \A^\ell_{j-1}  |\calF_{j-1} \big]  + \E\Big[ \Big(\big( \A^\ell_j- \A^\ell_{j-1} \big) - \big( A^\ell_j- A^\ell_{j-1}\big) \Big)^2 | \calF_{j-1} \Big] .
    \end{align*}
   We now need the following statement. Its proof is provided after the proof of Proposition~\ref{Prop:Error_upto_tauCoupling}.
  \begin{lemma}
  \label{Lem:Bound_for_expected_increments}For any $i\geq 0$,
  \[
  \big|\E\big[ \A^\ell_{i+1}-\A^\ell_i| \calF_i\big]\big| \leq \Xi\ell^{-2} \lambda_{\min}^{-3}.
  \]
\end{lemma}
    Using Lemma~\ref{Lem:Bound_for_expected_increments}, we get that
    \begin{align}
       \E&\Big[\Big|\E\big[ \big( \A^\ell_j- A^\ell_j \big)^2 - \big( \A^\ell_{j-1}- A^\ell_{j-1} \big)^2 | \calF_{j-1} \big]\Big|\Big] \nonumber\\
       &\leq \frac{2 \Xi}{\ell^2\lambda_{\min}^3}\E\big[\big| \A^\ell_{j-1}- A^\ell_{j-1} \big|\big]  + \E\Big[ \Big(\big( \A^\ell_j- \A^\ell_{j-1} \big) - \big( A^\ell_j- A^\ell_{j-1}\big) \Big)^2  \Big] \nonumber\\
       &\leq \frac{2\Xi}{\ell^2\lambda_{\min}^3} \sum_{k=1}^{j-1}\E\big[\big| \big(\A^\ell_{k}- \A^\ell_{k-1}\big) - \big(A^\ell_{k}- A^\ell_{k-1}\big) \big|\big]  + \E\Big[ \Big(\big( \A^\ell_j- \A^\ell_{j-1} \big) - \big( A^\ell_j- A^\ell_{j-1}\big) \Big)^2  \Big].
       \label{Equ:Decompose_the_terms_in_expected_Martingale_error}
    \end{align}
Here, note that
\begin{align}
    2 \E\big[\big| \big(\A^\ell_{k+1}-& \A^\ell_{k}\big) - \big(A^\ell_{k+1}- A^\ell_{k}\big) \big|\big]
    = \E\big[\big| \big(\overline{\X}^\ell_{k}- \underline{\X}^\ell_{k}\big) - \big(\overline{X}^\ell_{k}- \underline{X}^\ell_{k}\big) \big|\big] \nonumber\\
    &\leq \E\big[\big| \overline{\X}^\ell_{k}-\overline{X}^\ell_{k}\big|\big]+ \E\big[\big| \underline{\X}^\ell_{k} -  \underline{X}^\ell_{k} \big|\big] \nonumber\\
    &=\E\big[\big( \overline{\X}^\ell_{k}-\overline{X}^\ell_{k}\big)_+\big]+\E\big[\big( \overline{X}^\ell_{k}-\overline{\X}^\ell_{k}\big)_+\big]+ \E\big[\big( \underline{\X}^\ell_{k} -  \underline{X}^\ell_{k} \big)_+\big]+\E\big[\big( \underline{X}^\ell_{k} -  \underline{\X}^\ell_{k} \big)_+\big],
    \label{Equ:Decompose_the_difference_of_increments}
\end{align}
where
\begin{align*}
    \E\big[ \big(\overline{\X}^\ell_{k} - \overline{X}^\ell_{k} \big)_+ \big]
    &= \int_{0}^{\infty}\P\big(\overline{\X}^\ell_{k} - \overline{X}^\ell_{k} >y\big)\,\dd y
    =  \int_{0}^{\infty}\P\big(\overline{X}^\ell_{k}< \overline{\X}^\ell_{k} -y\big)\,\dd y\\\displaybreak[0]
    &= \int_{0}^{\infty} \E\big[\P\big(\overline{X}^\ell_{k}< \overline{\X}^\ell_{k} -y |\overline{\X}^\ell_{k}\big)\big]\,\dd y
    = \E\Big[ \int_{0}^{\infty} \P\big(\overline{X}^\ell_{k}< \overline{\X}^\ell_{k} -y |\overline{\X}^\ell_{k}\big)\,\dd y \Big]\\ \displaybreak[0]
    &= \E\Big[ \int_{0}^{\overline{\X}^\ell_{k}} \P\big(\overline{X}^\ell_{k}< \overline{\X}^\ell_{k} -y | \overline{\X}^\ell_{k}\big)\,\dd y \Big]
    = \E\Big[ \int_{0}^{\overline{\X}^\ell_{k}} \P\big(\overline{X}^\ell_{k}<y |\overline{\X}^\ell_{k}\big)\,\dd y \Big]\\ \displaybreak[0]
   &= \E\Big[ \int_{0}^{\overline{\X}^\ell_{k}} \Big( 1- \exp\Big(-\int_{\A^\ell_{k}}^{\A^\ell_{k}+y}\Big( \lambda(i/\ell^2, 0)-\lambda(i/\ell^2,s/\ell^2)\Big)_+\,\dd s\Big)\Big)\,\dd y \Big]\\ \displaybreak[0]
  &\leq  \E\Big[ \int_{0}^{\overline{\X}^\ell_{k}}  \int_{0}^{y}\big( \lambda(i/\ell^2, 0)-\lambda(i/\ell^2,(\A^\ell_{k}+s)/\ell^2)\big)_+\,\dd s\,\dd y \Big]\\ \displaybreak[0]
  &\leq \frac{\Xi}{\ell^2} \E\Big[ \int_{0}^{\overline{\X}^\ell_{k}}  \int_{0}^{y}\big( |\A^\ell_{k}| +s\big)\,\dd s\,\dd y \Big]
   \leq \frac{\Xi}{\ell^2} \E\Big[ \int_{0}^{E_{1,k}}  \int_{0}^{y}\big( |\A^\ell_{k}| +s\big)\,\dd s\,\dd y \Big]\\ \displaybreak[0]
  &= \frac{\Xi}{\ell^2} \E\Big[ \int_{0}^{E_{1,k}}  \big( |\A^\ell_{k}|y +y^2/2\big)\,\dd y \Big]
   = \frac{\Xi}{\ell^2} \E\Big[  |\A^\ell_{k}|{E_{1,k}}^2/2 +{E_{1,k}}^3/6 \Big]\\\displaybreak[0]
   &=  \frac{\Xi}{\ell^2} \Big( \frac{1}{\lambda_{\min}^2}\E[|\A^\ell_{k}|] + \frac{1}{\lambda_{\min}^3} \Big).
\end{align*}
Observing that
\[
(\A^\ell_{k})^2 = \sum_{r=1}^k (\A^\ell_{r}-\A^\ell_{r-1})^2+ 2\sum_{r=1}^k \A^\ell_{r-1}(\A^\ell_{r}-\A^\ell_{r-1}) +{(\A^\ell_{0})}^2,
\]
and using Lemma~\ref{Lem:Bound_for_expected_increments}, we get that
\begin{align}
     \E\big[|\A^\ell_{k}|\big]^2 \leq \E\big[{(\A^\ell_{k})}^2 \big]
    &=  \sum_{r=1}^k \E[(\A^\ell_{r}-\A^\ell_{r-1})^2] + 2\sum_{r=1}^k \E\big[\A^\ell_{r-1}(\A^\ell_{r}-\A^\ell_{r-1})\big] +\frac{\ell^2}{4} \nonumber\\
    &\leq  \sum_{r=1}^k \E\big[(\A^\ell_{r}-\A^\ell_{r-1})^2\big] + \frac{2\Xi}{\ell^2\lambda_{\min}^3}\sum_{r=1}^k \E\big[|\A^\ell_{r-1}|\big] +\frac{\ell^2}{4} \nonumber\\\displaybreak[0]
    &\leq k \E[E^2] + \frac{2\Xi}{\ell^2\lambda_{\min}^3}\sum_{r=1}^k \Big( \frac{\ell}{2} + (r-1)\E[E] \Big) +\frac{\ell^2}{4} \nonumber\\
    \displaybreak[0]
    &= \frac{2k}{\lambda_{\min}^2}+ \frac{k\Xi}{\ell \lambda_{\min}^3}+ \frac{k(k-1)\Xi}{\ell^2\lambda_{\min}^3} +\frac{\ell^2}{4} \leq C_t\ell^2,
    \label{Equ:Bound_for_Alk^2}
\end{align}
for all $k\leq \lfloor \ell^2t \rfloor$. Here,
\[
C_t:= \frac{2t}{\lambda_{\min}^2}+ \frac{t\Xi}{\lambda_{\min}^3}+ \frac{t^2\Xi}{\lambda_{\min}^3} +\frac{1}{4}.
\]
Therefore, we have that, for all $k\leq \lfloor \ell^2t \rfloor$,
\[
 \E\Big[ \big(\overline{\X}^\ell_{k} - \overline{X}^\ell_{k} \big)_+ \Big] \leq C'_t/\ell,\qquad\text{ where }
C'_t:= \Xi\big(\sqrt{C_t}/\lambda_{\min}^2 +1/\lambda_{\min}^3\big).
\]
By applying similar calculations to the other terms on the right-hand side of~\eqref{Equ:Decompose_the_difference_of_increments}, we obtain that for any $k \leq \lfloor \ell^2t \rfloor$,
\begin{align}
     \E&\big[\big| \big(\A^\ell_{k+1}- \A^\ell_{k}\big) - \big(A^\ell_{k+1}- A^\ell_{k}\big) \big|\big] \leq 2 C'_t /\ell.
    \label{Equ:Decompose_the_difference_of_increments-2}
\end{align}
Similar to~\eqref{Equ:Decompose_the_difference_of_increments}, the second term on the right-hand side of~\eqref{Equ:Decompose_the_terms_in_expected_Martingale_error} can be decomposed as
\begin{align}
     \E&\Big[ \Big(\big( \A^\ell_j- \A^\ell_{j-1} \big) - \big( A^\ell_j- A^\ell_{j-1}\big) \Big)^2  \Big]
    =\frac{1}{4} \E\Big[\Big( \big(\overline{\X}^\ell_{j-1}- \underline{\X}^\ell_{j-1}\big) - \big(\overline{X}^\ell_{j-1}- \underline{X}^\ell_{j-1}\big) \Big)^2\Big] \nonumber\\
    &\leq \frac{1}{2}\E\Big[\big( \overline{\X}^\ell_{j-1}-\overline{X}^\ell_{j-1}\big)^2\Big]+ \frac{1}{2}\E\Big[\big( \underline{\X}^\ell_{j-1} -  \underline{X}^\ell_{j-1} \big)^2\Big] \nonumber\\
    &\leq \E\Big[\big( \overline{\X}^\ell_{j-1}-\overline{X}^\ell_{j-1}\big)_+^2\Big]+\E\Big[\big( \overline{X}^\ell_{j-1}-\overline{\X}^\ell_{j-1}\big)_+^2\Big]+ \E\Big[\big( \underline{\X}^\ell_{j-1} -  \underline{X}^\ell_{j-1} \big)_+^2\Big]+\E\Big[\big( \underline{X}^\ell_{j-1} -  \underline{\X}^\ell_{j-1} \big)_+^2\Big].
    \label{Equ:Decompose_the_difference_of_square_of_increments}
\end{align}
Calculations similar to those done earlier give us that, for all $j \leq \lfloor \ell^2t \rfloor$,
\[
    \E\Big[\big( \overline{\X}^\ell_{j-1}-\overline{X}^\ell_{j-1}\big)_+^2\Big] 
    = \int_0^\infty 2y\P\big(\overline{\X}^\ell_{j-1}-\overline{X}^\ell_{j-1} >y\big)\,\dd y
    \le   \frac{\Xi}{\ell^2} \Big( \frac{4}{\lambda_{\min}^3}\E\big[|\A^\ell_{j-1}|\big] + \frac{6}{\lambda_{\min}^4} \Big) \leq \frac{C''_t}{\ell},
\]
where 
\(
C''_t:= \Xi\big(4\sqrt{C_t}/\lambda_{\min}^3 +6/\lambda_{\min}^4 \big)
\).
By applying similar calculations to the other terms on the right-hand side of~\eqref{Equ:Decompose_the_difference_of_square_of_increments}, we obtain that, for all $j \leq \lfloor \ell^2t \rfloor$,
\begin{align}
     \E&\Big[ \Big(\big( \A^\ell_j- \A^\ell_{j-1} \big) - \big( A^\ell_j- A^\ell_{j-1}\big) \Big)^2  \Big] \leq 4 C''_t /\ell.
    \label{Equ:Decompose_the_difference_of_square_of_increments-2}
\end{align}
Combining~\eqref{Equ:Decompose_the_difference_of_increments-2} and~\eqref{Equ:Decompose_the_difference_of_square_of_increments-2}  together with~\eqref{Equ:Decompose_the_terms_in_expected_Martingale_error},
we get that for all $j\leq \ell^2 t$,
\begin{align}
     \E\Big[\Big|\E\Big[ \big( \A^\ell_j- A^\ell_j \big)^2 - \big( \A^\ell_{j-1}- A^\ell_{j-1} \big)^2 |\calF_{j-1} \Big]\Big|\Big] \leq C'''_t/\ell,
     \label{Equ:Bound_for_the_terms_in_expected_Martingale_error}
\end{align}
    where
    \(
    C'''_t:= 4C'_t t\Xi\lambda_{\min}^{-3} +4C''_t
    \).
Therefore, we get that
\begin{align*}
&\E\big[ \Phi^\ell_{\lfloor \ell^2 t \rfloor} \big]
= \E\Big[\big( \A^\ell_{\lfloor \ell^2t \rfloor}- A^\ell_{\lfloor \ell^2t \rfloor} \big)^2 \Big]
 +\sum_{j=1}^{\lfloor \ell^2t \rfloor} \E\Big[\Big| \E\Big[ \big( \A^\ell_j- A^\ell_j \big)^2 - \big( \A^\ell_{j-1}- A^\ell_{j-1} \big)^2 |\calF_{j-1} \Big] \Big|\Big]\\
&= \E\Big[ \sum_{j=1}^{\lfloor \ell^2t \rfloor}\E\Big[\big( \A^\ell_{j}- A^\ell_{j} \big)^2- \big( \A^\ell_{j-1}- A^\ell_{j-1} \big)^2 |\calF_{j-1}\Big]  \Big]+\sum_{j=1}^{\lfloor \ell^2t \rfloor} \E\Big[\Big| \E\Big[ \big( \A^\ell_j- A^\ell_j \big)^2 - \big( \A^\ell_{j-1}- A^\ell_{j-1} \big)^2 | \calF_{j-1} \Big] \Big|\Big]\\ \displaybreak[0]
&\leq 2\sum_{j=1}^{\lfloor \ell^2t \rfloor} \E\Big[\Big| \E\Big[ \big( \A^\ell_j- A^\ell_j \big)^2 - \big( \A^\ell_{j-1}- A^\ell_{j-1} \big)^2| \calF_{j-1} \Big] \Big|\Big]
\leq 2C'''_t\ell t.
\end{align*}
Combining this with~\eqref{Equ:Error_upto_tauCoupling<ell^2t} yields the desired result with $\mathfrak C_t:= 2C'''_tt$.
\end{proof}

\begin{proof}[Proof of Lemma~\ref{Lem:Bound_for_expected_increments}]
   Note that
     \begin{align*}
         \E\big[ \A^\ell_{i+1} -\A^\ell_i | \calF_i\big]
         = \E\Big[ \frac{1}{2}\big(\overline{\X}^\ell_i -\underline{\X}^\ell_i\big)  | \calF_i\Big]
        =\frac{1}{2}\big( \E\big[ \overline{\X}^\ell_i | \calF_i\big] - \E\big[  \underline{\X}^\ell_i | \calF_i\big] \big).
     \end{align*}
     Now,
     \begin{align*}
         \P\big( \overline{\X}^\ell_i  >x | \calF_i\big) = \exp\Big(-\int_{\A^\ell_i}^{\A^\ell_i+x}\lambda(i/{\ell^2},{s}/{\ell^2})\,\dd s\Big)
         \quad
         \text{and}
         \quad
         \P\big(   \underline{\X}^\ell_i >x | \calF_i\big) = \exp\Big(-\int_{\A^\ell_i-x}^{\A^\ell_i}\lambda({i}/{\ell^2},{s}/{\ell^2})\,\dd s\Big).
     \end{align*}
     Therefore, we have
     \begin{align*}
         2\big| \E\big[ \A^\ell_{i+1} -\A^\ell_i | \calF_i\big] \big|
        &= \Big|
         \int_{0}^{\infty} \Big( \e^{-\int_{\A^\ell_i}^{\A^\ell_i+x}\lambda(i/\ell^2,s/\ell^2)\,\dd s}-  \e^{-\int_{\A^\ell_i-x}^{\A^\ell_i}\lambda(i/\ell^2,s/\ell^2)\,\dd s} \Big)\,\dd x \Big|\\
         &\leq 
         \int_{0}^{\infty} \Big| \e^{-\int_{\A^\ell_i}^{\A^\ell_i+x}\lambda(i/\ell^2,s/\ell^2)\,\dd s}-  \e^{-\int_{\A^\ell_i-x}^{\A^\ell_i}\lambda(i/\ell^2,s/\ell^2)\,\dd s}  \Big| \,\dd x.
     \end{align*}
     Observing that 
     \[
     \min\Big( \int_{\A^\ell_i}^{\A^\ell_i+x}\lambda(i/\ell^2,s/\ell^2)\,\dd s, \int_{\A^\ell_i-x}^{\A^\ell_i}\lambda(i/\ell^2,s/\ell^2)\,\dd s\Big) \geq x\lambda_{\min},
     \]
  and
     \begin{align*}
         \Big| \int_{\A^\ell_i}^{\A^\ell_i+x}\lambda(i/\ell^2,s/\ell^2)\,\dd s - \int_{\A^\ell_i-x}^{\A^\ell_i}\lambda(i/\ell^2,s/\ell^2)\,\dd s\Big|
        & = \Big| \int_{0}^{x}\lambda\big(i/\ell^2,(\A^\ell_i+s)/{\ell^2}\big)\,\dd s - \int_{0}^{x}\lambda\big(i/\ell^2,(\A^\ell_i-s)/{\ell^2}\big)\,\dd s\Big|\\
        & \leq \int_{0}^{x} \Big| \lambda\big(i/\ell^2,(\A^\ell_i+s)/{\ell^2}\big)-\lambda\big(i/\ell^2,(\A^\ell_i-s)/{\ell^2}\big)\Big|\,\dd s,
     \end{align*}
 and using the inequality $|\e^{-a}-\e^{-b}|\leq |a-b|\e^{-(a\wedge b)}$, for any $a,b>0$, we get that
 \begin{align*}
     2\big| \E\big[ \A^\ell_{i+1} -\A^\ell_i | \calF_i\big] \big|
        &\leq \int_{0}^{\infty} \int_{0}^{x}
        \Big| \lambda\big(i/\ell^2,(\A^\ell_i+s)/{\ell^2}\big)-\lambda\big(i/\ell^2,(\A^\ell_i-s)/{\ell^2}\big)\Big|\e^{-x\lambda_{\min}}\,\dd s\,\dd x\\
        &= \int_{0}^{\infty} \int_{s}^{\infty}
        \Big| \lambda\big(i/\ell^2,(\A^\ell_i+s)/{\ell^2}\big)-\lambda\big(i/\ell^2,(\A^\ell_i-s)/{\ell^2}\big)\Big|\e^{-x\lambda_{\min}}\,\dd x \,\dd s\\
        &= \frac{1}{\lambda_{\min}}\int_{0}^{\infty} \Big| \lambda\big(i/\ell^2,(\A^\ell_i+s)/{\ell^2}\big)-\lambda\big(i/\ell^2,(\A^\ell_i-s)/{\ell^2}\big)\Big|\e^{-s\lambda_{\min}} \,\dd s.
 \end{align*}
 Now, using $\Xi$-Lipschitz continuity, we obtain that
 \begin{align*}
  2\big| \E\big[ \A^\ell_{i+1} -\A^\ell_i | \calF_i\big] \big|
  \leq 
  \frac{\Xi}{\lambda_{\min}}\int_{0}^{\infty} \frac{2s}{\ell^2}\e^{-s\lambda_{\min}} \,\dd s
  = \frac{2\Xi}{\ell^2\lambda_{\min}^3}.
 \end{align*}
This proves the desired result.
\end{proof}

To approximate the total traffic prior to $\tilde\tau_\ell$, we establish the following result.

\begin{prop}
For any $t > 0$, there exists a constant $\mathsf{C}_t < \infty$ such that for every $\epsilon > 0$,
\[
\limsup_{\ell\to\infty} \epsilon\ell \P\Big(   \sup_{0\leq k \leq \lfloor \ell^2 t \rfloor } \frac{1}{\ell^3} 
    \Big| \sum_{i = 0}^{k} \int_{\B^{\ell}_i}^{\A^{\ell}_i}  \mu(i/\ell^2, s/\ell^2) \, \boldsymbol{\calP}_{\ell, i} (\dd s) 
    - \sum_{i = 0}^{k} (\A^{\ell}_i- \B^{\ell}_i) \lambda(i/\ell^2,0) \mu(i/\ell^2,0)  \Big| >\epsilon\Big) < \mathsf C_t.
\]
\label{Prop:Error_in_area_approximation}
\end{prop}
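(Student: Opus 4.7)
The plan is to decompose the error into a Poisson-fluctuation martingale and a deterministic Lipschitz-correction term, and then to treat each separately. For $0 \leq k \leq \lfloor \ell^2 t \rfloor$, write the quantity inside the absolute value in the statement as $(E^\ell_k + F^\ell_k)/\ell^3$, where
\[
E^\ell_k := \sum_{i=0}^k \Big(\int_{\B^\ell_i}^{\A^\ell_i} \mu(i/\ell^2, s/\ell^2)\,\boldsymbol{\calP}_{\ell, i}(\dd s) - \int_{\B^\ell_i}^{\A^\ell_i} \lambda(i/\ell^2, s/\ell^2)\mu(i/\ell^2, s/\ell^2)\,\dd s\Big),
\]
\[
F^\ell_k := \sum_{i=0}^k \int_{\B^\ell_i}^{\A^\ell_i}\big[\lambda(i/\ell^2, s/\ell^2)\mu(i/\ell^2, s/\ell^2) - \lambda(i/\ell^2, 0)\mu(i/\ell^2, 0)\big]\,\dd s.
\]

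For the fluctuation part, I would let $\calF_k$ be the $\sigma$-algebra generated by $\{\boldsymbol{\calP}_{\ell, j}\colon j < k\}$, so that $\A^\ell_k$ and $\B^\ell_k$ are $\calF_k$-measurable. By Campbell's theorem the conditional mean of each increment of $E^\ell_k$ vanishes, so $\{E^\ell_k\}$ is an $\{\calF_k\}$-martingale, and the Poisson variance formula gives the conditional-variance bound $\E[(E^\ell_k - E^\ell_{k-1})^2 \mid \calF_{k-1}] \leq \mu_{\max}^2\lambda_{\max}(\A^\ell_k - \B^\ell_k)$. Using the linear bound $\E[\A^\ell_k - \B^\ell_k] = O(\ell)$, uniform in $k \leq \ell^2 t$, which follows from Lemma~\ref{Lem:Bound_for_expected_increments} applied to both $\A^\ell$ and $\B^\ell$, Doob's $L^2$ maximal inequality then yields $\P(\sup_{k \leq \lfloor \ell^2 t\rfloor} |E^\ell_k|/\ell^3 > \epsilon/2) = O(\epsilon^{-2}\ell^{-3})$. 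This decays much faster than the required $O(1/(\epsilon\ell))$, so this step is essentially a verbatim adaptation of the proof of Proposition~\ref{Prop:Error_in_area_approximation-homogeneous}.

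The genuinely new difficulty is the bias term $F^\ell_k$, for which the strategy is to invoke the $\Xi$-Lipschitz continuity of $\lambda$ and $\mu$ in the second argument, together with their global bounds, to obtain
\[
\big|\lambda(i/\ell^2, s/\ell^2)\mu(i/\ell^2, s/\ell^2) - \lambda(i/\ell^2, 0)\mu(i/\ell^2, 0)\big| \leq \Xi(\lambda_{\max} + \mu_{\max})|s|/\ell^2.
\]
Integrating, and using the elementary estimate $\int_{\B^\ell_i}^{\A^\ell_i}|s|\,\dd s \leq ((\A^\ell_i)^2 + (\B^\ell_i)^2)/2$, the $i$-th summand of $F^\ell_k$ is bounded in absolute value by $\Xi(\lambda_{\max}+\mu_{\max})((\A^\ell_i)^2 + (\B^\ell_i)^2)/(2\ell^2)$. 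The triangle inequality then gives
\[
\sup_{0 \leq k \leq \lfloor \ell^2 t\rfloor} |F^\ell_k| \leq \frac{\Xi(\lambda_{\max}+\mu_{\max})}{2\ell^2}\sum_{i=0}^{\lfloor \ell^2 t\rfloor}\big((\A^\ell_i)^2 + (\B^\ell_i)^2\big),
\]
and the second-moment bound~\eqref{Equ:Bound_for_Alk^2}, together with its direct analog for $\B^\ell_i$, yields $\E[\sup_k |F^\ell_k|] = O(\ell^2)$. Markov's inequality then gives $\P(\sup_k |F^\ell_k|/\ell^3 > \epsilon/2) = O(1/(\epsilon\ell))$, which is exactly the required rate.

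The main obstacle, already resolved by the preparatory estimates of Section~\ref{Sec:Inhom}, is the second-moment control $\E[(\A^\ell_i)^2], \E[(\B^\ell_i)^2] = O(\ell^2)$ for $i \leq \ell^2 t$. In the homogeneous case the walkers have centered increments, so this bound is immediate; in the inhomogeneous setting Lemma~\ref{Lem:Bound_for_expected_increments} shows that the drift of $\A^\ell_i$ is of order $\ell^{-2}$, and the telescoping estimate carried out in~\eqref{Equ:Bound_for_Alk^2} is needed to keep its accumulation from inflating the variance. This is precisely what enforces the claimed $1/\ell$ rate on the Lipschitz correction, and without it the bias term $F^\ell_k$ would be too large to match the bound.
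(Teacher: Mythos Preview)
Your proposal is correct and follows essentially the same approach as the paper: a martingale piece handled by Doob's inequality (yielding $O(\epsilon^{-2}\ell^{-3})$) and a Lipschitz-bias piece handled by Markov's inequality together with the second-moment bound~\eqref{Equ:Bound_for_Alk^2} (yielding $O(\epsilon^{-1}\ell^{-1})$). The only cosmetic difference is that the paper splits into three terms $\calE^{(\ell)}_{1,t},\calE^{(\ell)}_{2,t},\calE^{(\ell)}_{3,t}$---first replacing $\mu(\cdot,s/\ell^2)$ by $\mu(\cdot,0)$, then running the martingale with constant $\mu$, then replacing $\int\lambda$ by length times $\lambda(\cdot,0)$---whereas you combine the two Lipschitz corrections into a single $F^\ell_k$ using the Lipschitz constant of the product $\lambda\mu$; this is a mild simplification but not a different argument.
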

\begin{proof}[Proof of Proposition~\ref{Prop:Error_in_area_approximation}]
    Observe that
    \begin{align}
          \sup_{0\leq k \leq \lfloor \ell^2 t \rfloor } \frac{1}{\ell^3} 
    \Big| \sum_{i = 0}^{k} \int_{\B^{\ell}_i}^{\A^{\ell}_i} \mu(i/\ell^2, s/\ell^2) \, \boldsymbol{\calP}_{\ell, i} (\dd s) 
    - \sum_{i = 0}^{k} (\A^{\ell}_i- \B^{\ell}_i) \lambda(i/\ell^2,0) \mu(i/\ell^2,0)  \Big|
    \leq \calE^{(\ell)}_{1,t}+ \calE^{(\ell)}_{2,t}+ \calE^{(\ell)}_{3,t},
    \label{Equ:Steps-docomposing_error_terms}
    \end{align}
    where
\begin{align*}
\calE^{(\ell)}_{1,t}&:= \sup_{0\leq k \leq \lfloor \ell^2 t \rfloor } \frac{1}{\ell^3}
\Big| \sum_{i = 0}^{k} \int_{\B^{\ell}_i}^{\A^{\ell}_i}  \mu(i/\ell^2, s/\ell^2) \, \boldsymbol{\calP}_{\ell, i}(\dd s) 
    - \sum_{i = 0}^{k} \mu(i/\ell^2,0)\boldsymbol{\calP}_{\ell, i}([\B^{\ell}_i, \A^{\ell}_i]) \Big|,
\\
\calE^{(\ell)}_{2,t}&:= \sup_{0\leq k \leq \lfloor \ell^2 t \rfloor } \frac{1}{\ell^3}
\Big|  \sum_{i = 0}^{k} \mu(i/\ell^2,0)  \boldsymbol{\calP}_{\ell, i}([\B^{\ell}_i, \A^{\ell}_i] )
- \sum_{i = 0}^{k} \mu(i/\ell^2,0) \int_{\B^{\ell}_i}^{\A^{\ell}_i}  \lambda(i/\ell^2, s/\ell^2) \,\dd s \Big|, \text{ and}
\\
\calE^{(\ell)}_{3,t}&:= \sup_{0\leq k \leq \lfloor \ell^2 t \rfloor } \frac{1}{\ell^3}
\Big|\sum_{i = 0}^{k} \mu(i/\ell^2,0) \int_{\B^{\ell}_i}^{\A^{\ell}_i}  \lambda(i/\ell^2, s/\ell^2) \,\dd s
        - \sum_{i = 0}^{k}  \Big(\A^{\ell}_i - \B^{\ell}_i  \Big) \lambda(i/\ell^2,0) \mu(i/\ell^2, 0)  \Big|.
\end{align*}
We establish the desired result in several steps.

\medskip
\noindent
\emph{Step (1): first term}.
Using the Lipschitz continuity of $\mu$ in its second argument, we have
\begin{align*}
\calE^{(\ell)}_{1,t}
&= \sup_{0\leq k \leq \lfloor \ell^2 t \rfloor } \frac{1}{\ell^3}
\Big| \sum_{i = 0}^{k} \int_{\B^{\ell}_i}^{\A^{\ell}_i}  \big( \mu(i/\ell^2, s/\ell^2) - \mu(i/\ell^2,0) \big) \, \boldsymbol{\calP}_{\ell, i}(\dd s)  \Big|
\\
&\leq \sup_{0\leq k \leq \lfloor \ell^2 t \rfloor } \frac{1}{\ell^3}
\sum_{i = 0}^{k} \int_{\B^{\ell}_i}^{\A^{\ell}_i}  \big| \mu(i/\ell^2, s/\ell^2) - \mu(i/\ell^2,0)\big| \, \boldsymbol{\calP}_{\ell, i}(\dd s)
\\
&\leq \sup_{0\leq k \leq \lfloor \ell^2 t \rfloor } \frac{\Xi}{\ell^5}
\sum_{i = 0}^{k} \int_{\B^{\ell}_i}^{\A^{\ell}_i}  |s| \, \boldsymbol{\calP}_{\ell, i}(\dd s)
=    \frac{\Xi}{\ell^5}
\sum_{i = 0}^{\lfloor \ell^2 t \rfloor} \int_{\B^{\ell}_i}^{\A^{\ell}_i}  |s| \, \boldsymbol{\calP}_{\ell, i}(\dd s).
\end{align*}
Hence, for all $\ell \geq 1$,
\begin{align*}
 \ell \E\big[\calE^{(\ell)}_{1,t}\big]
&\leq  \frac{\Xi}{\ell^4}
\sum_{i = 0}^{\lfloor \ell^2 t \rfloor} \E\Big[ \int_{\B^{\ell}_i}^{\A^{\ell}_i}  |s| \, \boldsymbol{\calP}_{\ell, i}(\dd s) \Big]= \frac{\Xi}{\ell^4}
\sum_{i = 0}^{\lfloor \ell^2 t \rfloor} \E\Big[\E\Big[ \int_{\B^{\ell}_i}^{\A^{\ell}_i}  |s| \, \boldsymbol{\calP}_{\ell, i}(\dd s) | \calF_i \Big]\Big] \nonumber\\
&= \frac{\Xi}{\ell^4}
\sum_{i = 0}^{\lfloor \ell^2 t \rfloor} \E\Big[ \int_{\B^{\ell}_i}^{\A^{\ell}_i}  |s| \lambda(i/\ell^2, s/\ell^2)\, \dd s \Big]\le  \frac{\Xi \lambda_{\max}}{2\ell^4}
\sum_{i = 0}^{\lfloor \ell^2 t \rfloor} \E\big[ {(\A^{\ell}_i)}^2 + {(\B^{\ell}_i)}^2 \big] \nonumber\\
&\leq \frac{\Xi \lambda_{\max}}{2\ell^4} \lfloor \ell^2 t \rfloor 2C_t\ell^2
\leq \Xi \lambda_{\max}C_t t.
\end{align*}
In the second-to-last inequality, we used the bound for $\E[ (\A^{\ell}_i)^2 ]$ as given in \eqref{Equ:Bound_for_Alk^2}, and a similar argument yields the same bound for $\E[ (\B^{\ell}_i)^2 ]$. The above inequality, when combined with Markov's inequality, implies that, for all $\ell \geq 1$,
\begin{align}
    \epsilon\ell \P\big(\calE^{(\ell)}_{1,t} >\epsilon/3 \big) \leq 3\Xi \lambda_{\max}C_t t.
    \label{Equ:Step-1_bound_for_error_prob}
\end{align}

\medskip
\noindent
\emph{Step (2): second term}.
Let us define
\[
\Psi^{\ell}_k:=  \sum_{i = 0}^{k-1} \mu(i/\ell^2,0) \boldsymbol{\calP}_{\ell, i}([\B^{\ell}_i, \A^{\ell}_i])
- \sum_{i = 0}^{k-1} \mu(i/\ell^2,0) \int_{\B^{\ell}_i}^{\A^{\ell}_i}   \lambda(i/\ell^2, s/\ell^2) \,\dd s, 
\]
and observe that
\begin{align*}
\E\big[ \Psi^{\ell}_{k+1}- \Psi^{\ell}_k | \calF_k \big] 
= \mu(k/\ell^2,0) \E\Big[  \boldsymbol{\calP}_{\ell, k}([\B^{\ell}_k, \A^{\ell}_k])
-   \int_{\B^{\ell}_k}^{\A^{\ell}_k}   \lambda(k/\ell^2, s/\ell^2) \,\dd s | \calF_k \Big] = 0.
\end{align*}
This implies that $\{\Psi^{\ell}_k\colon k \geq 0\}$ is a martingale, and consequently, $\big\{\big(\Psi^{\ell}_k\big)^2\colon k \geq 0\}$ is a submartingale. Therefore, using Doob's maximal inequality, we obtain that
\begin{align}
\P\big(\calE^{(\ell)}_{2,t} >\epsilon/3 \big)
= \P\Big( \sup_{0\leq k \leq \lfloor \ell^2 t \rfloor } \big| \Psi^{\ell}_{k+1} \big| >\epsilon \ell^3/3 \Big)
\leq \frac{9}{\epsilon^2\ell^6} \E\Big[ \big(\Psi^{\ell}_{\lfloor \ell^2 t \rfloor +1}\big)^2 \Big].
\label{Equ:Step-2_Doob_bound}
\end{align}
Now,
\begin{align*}
\E\Big[ \big(\Psi^{\ell}_{k+1}\big)^2- \big(\Psi^{\ell}_k \big)^2 | \calF_k \Big]
&=\E\Big[ 2\Psi^{\ell}_k\big(\Psi^{\ell}_{k+1}-\Psi^{\ell}_k \big)+ \big(\Psi^{\ell}_{k+1}-\Psi^{\ell}_k \big)^2 | \calF_k \Big]=\E\Big[ \big(\Psi^{\ell}_{k+1}-\Psi^{\ell}_k \big)^2 | \calF_k \Big]\\
&=\mu(k/\ell^2,0)^2\E\Big[ \big(  \boldsymbol{\calP}_{\ell, k}([\B^{\ell}_k, \A^{\ell}_k])
-   \int_{\B^{\ell}_k}^{\A^{\ell}_k}   \lambda(k/\ell^2, s/\ell^2) \,\dd s \big)^2 | \calF_k \Big].
\end{align*}
Since the conditional distribution of the number of points $\boldsymbol{\calP}_{\ell, k}([\B^{\ell}_k, \A^{\ell}_k])$ given $\calF_k$ is Poisson with parameter $\int_{\B^{\ell}_k}^{\A^{\ell}_k}   \lambda(k/\ell^2, s/\ell^2) \,\dd s$, we obtain that
\begin{align*}
\E\Big[ \big(\Psi^{\ell}_{k+1}\big)^2- \big(\Psi^{\ell}_k \big)^2 | \calF_k \Big]
&=\mu(k/\ell^2,0)^2 \int_{\B^{\ell}_k}^{\A^{\ell}_k}   \lambda(k/\ell^2, s/\ell^2) \,\dd s,
\end{align*}
which implies that
\begin{align*}
\E\Big[ \big(\Psi^{\ell}_{k+1}\big)^2- \big(\Psi^{\ell}_k \big)^2  \Big]
&=\mu(k/\ell^2,0)^2 \E\Big[\int_{\B^{\ell}_k}^{\A^{\ell}_k}   \lambda(k/\ell^2, s/\ell^2) \,\dd s \Big]\leq \lambda_{\max} \mu(k/\ell^2,0)^2 \E\Big[\A^{\ell}_k-\B^{\ell}_k \Big]\\
&\leq \lambda_{\max} \mu(k/\ell^2,0)^2 2\sqrt{C_t}\ell.
\end{align*}
As in Step (1), we have used the bounds for $\E[(\A^{\ell}_k)^2 ]$ and $\E[(\B^{\ell}_k)^2 ]$, specifically, the bound for $\E[(\A^{\ell}_k)^2 ]$ as given in~\eqref{Equ:Bound_for_Alk^2}, and a similar argument yields the same bound for $\E[(\B^{\ell}_k)^2 ]$. Now, incorporating the above inequality into~\eqref{Equ:Step-2_Doob_bound} gives us
\begin{align}
\limsup_{\ell\to\infty}\epsilon^2\ell^3 \P\big(\calE^{(\ell)}_{2,t} >\epsilon/3 \big)
&\leq \limsup_{\ell\to\infty} 18\sqrt{C_t}\lambda_{\max} \Big(\frac{1}{\ell^2}\sum_{i = 0}^{\lfloor \ell^2 t \rfloor}\mu(i/\ell^2,0)^2 \Big)= 18\sqrt{C_t}\lambda_{\max} \int_0^t \mu(x,0)^2\, \dd x.
\label{Equ:Step-2_bound_for_error_prob}
\end{align}

\medskip
\noindent
\emph{Step (3): third term}.
Using the Lipschitz continuity of $\lambda$ in its second argument, we obtain
\begin{align*}
\calE^{(\ell)}_{3,t} 
&= \sup_{0\leq k \leq \lfloor \ell^2 t \rfloor } \frac{1}{\ell^3}
\Big|\sum_{i = 0}^{k} \mu(i/\ell^2,0) \int_{\B^{\ell}_i}^{\A^{\ell}_i}  \big(\lambda(i/\ell^2, s/\ell^2) - \lambda(i/\ell^2,0) \big) \,\dd s 
          \Big|\\
& \leq \sup_{0\leq k \leq \lfloor \ell^2 t \rfloor } \frac{1}{\ell^3}
\sum_{i = 0}^{k} \mu(i/\ell^2,0) \int_{\B^{\ell}_i}^{\A^{\ell}_i}  \big|\lambda(i/\ell^2, s/\ell^2) - \lambda(i/\ell^2,0) \big| \,\dd s \\
& \leq \sup_{0\leq k \leq \lfloor \ell^2 t \rfloor } \frac{\Xi}{\ell^5}
\sum_{i = 0}^{k} \mu(i/\ell^2,0) \int_{\B^{\ell}_i}^{\A^{\ell}_i}  |s| \,\dd s \\
& \leq \sup_{0\leq k \leq \lfloor \ell^2 t \rfloor } \frac{\Xi}{2\ell^5}
\sum_{i = 0}^{k} \mu(i/\ell^2,0) \big((\A^{\ell}_i)^2+ (\B^{\ell}_i)^2 \big)  =  \frac{\Xi}{2\ell^5}
\sum_{i = 0}^{\lfloor \ell^2 t \rfloor} \mu(i/\ell^2,0) \big( (\A^{\ell}_i)^2+ (\B^{\ell}_i)^2 \big).
\end{align*}
Therefore, using the bounds for $\E[(\A^{\ell}_k)^2 ]$ and $\E[(\B^{\ell}_k)^2 ]$, we get
\begin{align*}
\limsup_{\ell\to\infty} \ell \E\big[\calE^{(\ell)}_{3,t}\big] 
&\leq \limsup_{\ell\to\infty} \frac{\Xi}{2\ell^4}
\sum_{i = 0}^{\lfloor \ell^2 t \rfloor} \mu(i/\ell^2,0) \E\big[ (\A^{\ell}_i)^2+ (\B^{\ell}_i)^2 \big] \leq \limsup_{\ell\to\infty} \frac{\Xi}{\ell^4}
\sum_{i = 0}^{\lfloor \ell^2 t \rfloor} \mu(i/\ell^2,0)C_t\ell^2  \nonumber\\
&= \limsup_{\ell\to\infty} \Xi C_t
\Big(\frac{1}{\ell^2}\sum_{i = 0}^{\lfloor \ell^2 t \rfloor} \mu(i/\ell^2,0) \Big)=\Xi C_t\int_0^t \mu(x, 0)\,\dd x .
\end{align*}
This, when combined with Markov's inequality, leads to
\begin{align}
\limsup_{\ell\to\infty}\epsilon\ell \P\big(\calE^{(\ell)}_{3,t} >\epsilon/3 \big) \leq 3\Xi C_t\int_0^t \mu(x, 0)\,\dd x.
\label{Equ:Step-3_bound_for_error_prob}
\end{align}
Now, combining~\eqref{Equ:Steps-docomposing_error_terms},~\eqref{Equ:Step-1_bound_for_error_prob},~\eqref{Equ:Step-2_bound_for_error_prob} and~\eqref{Equ:Step-3_bound_for_error_prob} yields the result with
\[
\mathsf C_t:= 3\Xi \lambda_{\max}C_t t + 3\Xi C_t\int_0^t \mu(x, 0)\,\dd x,
\]
as desired.
\end{proof}

We now state the following result, which is needed to prove the subsequent proposition.
\begin{lemma}
Under our assumption
\[
  \Big(  \frac{\tilde\tau_\ell}{\ell^2}, \frac{1}{\ell^3} 
    \sum_{i = 0}^{\tilde\tau_\ell -1} (A^{\ell}_i- B^{\ell}_i) \lambda(i/\ell^2,0) \mu(i/\ell^2,0)  
     \Big)\xlongrightarrow{d}
\Big( \vartheta, \int_{0}^{\vartheta} (1+M_x)\lambda(x,0)\mu(x,0)\,\dd x \Big), \qquad\text{ as } \ell\to\infty.
\]
\label{Lem:convergence-in-dist-upto-tauC-vertically-homogeneous}
\end{lemma}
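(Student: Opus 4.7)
The strategy is to adapt the proof of Lemma~\ref{Lem:Wlt-integration-weak-convergence} to the mixed stopping time $\tilde\tau_\ell$, using Proposition~\ref{Prop:Error_upto_tauCoupling} to transfer between the inhomogeneous walkers $\{\A^\ell_i, \B^\ell_i\}$ and their homogeneous counterparts $\{A^\ell_i, B^\ell_i\}$. The plan is to sandwich $\tilde\tau_\ell$ between two stopping times depending only on $\{A^\ell_i - B^\ell_i\colon i \geq 0\}$ and on the independent thresholds $\{E_{2,i} + E_{3,i}\colon i \geq 0\}$, and then to show that both sandwiching times converge jointly with the corresponding sum to the claimed limit.

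First, fix $\alpha \in (1/2, 1)$ and introduce
\[
\tau^{\uparrow}_\ell := \inf\{i \geq 0 \colon A^\ell_i - B^\ell_i < E_{2,i} + E_{3,i}\}
\quad\text{and}\quad
\tau^{\downarrow}_\ell := \inf\{i \geq 0 \colon A^\ell_i - B^\ell_i < E_{2,i} + E_{3,i} + 2\ell^\alpha\}.
\]
Clearly $\tau^{\downarrow}_\ell \leq \tau^{\uparrow}_\ell$. On the event $\calG^\ell_\alpha := \{\sup_{0 \leq i \leq \tilde\tau_\ell}|(\A^\ell_i - \B^\ell_i) - (A^\ell_i - B^\ell_i)| \leq \ell^\alpha\}$, a short triangle-inequality check directly at index $\tilde\tau_\ell$ gives the sandwich $\tau^{\downarrow}_\ell \leq \tilde\tau_\ell \leq \tau^{\uparrow}_\ell$, and Proposition~\ref{Prop:Error_upto_tauCoupling} (with the chosen $\alpha$) guarantees $\P(\calG^\ell_\alpha) \to 1$ on $\{\tilde\tau_\ell \leq \ell^2 t\}$ for any fixed $t > 0$.

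Second, a direct adaptation of the proof of Lemma~\ref{Lem:Wlt-integration-weak-convergence} shows that both sandwiching times, jointly with the associated sum, converge to the common limit $(\vartheta, \int_0^\vartheta (1+M_x)\lambda(x,0)\mu(x,0)\,\dd x)$. The rescaled process $W_{\ell, t} = \ell^{-1}(A^\ell_{\lfloor \ell^2 t\rfloor} - B^\ell_{\lfloor \ell^2 t\rfloor})$ (with linear interpolation) converges weakly to $1 + M_\cdot$ by Lemma~\ref{Lem:Wlt-weak-convergence} applied with $\lambda(\cdot) = \lambda(\cdot, 0)$; the rescaled thresholds $(E_{2,i} + E_{3,i})/\ell$ and $(E_{2,i} + E_{3,i} + 2\ell^\alpha)/\ell$ vanish uniformly in $i \leq C\ell^2$ because $\max_{i \leq C\ell^2}(E_{2,i} + E_{3,i}) = O(\log \ell)$ and $\alpha < 1$; and the continuity of the hitting and integral functionals at $1 + M_\cdot$ (Remark~\ref{Rem:Continuity-of-stopping-time-vert-homo-semi}) delivers the convergence of $\tau^{\uparrow}_\ell/\ell^2$ and $\tau^{\downarrow}_\ell/\ell^2$ to $\vartheta$. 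In particular, $(\tau^{\uparrow}_\ell - \tau^{\downarrow}_\ell)/\ell^2 \to 0$ in probability.

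To conclude via Slutsky's theorem, note that on the interval $[\tau^{\downarrow}_\ell, \tau^{\uparrow}_\ell]$ each summand is bounded by $(E_{2,i} + E_{3,i} + 2\ell^\alpha)\lambda_{\max}\mu_{\max} = O(\ell^\alpha)$ up to a logarithmic factor, so the discrepancy between the sums up to $\tau^{\uparrow}_\ell$ and up to $\tau^{\downarrow}_\ell$, divided by $\ell^3$, is $o(1)$ in probability. The sandwich then transfers the limit from both $\tau^{\uparrow}_\ell$ and $\tau^{\downarrow}_\ell$ to $\tilde\tau_\ell$. The main obstacle is the joint-in-threshold continuity needed to show that perturbing the crossing threshold by an amount $o(\ell)$ does not shift the limiting hitting time; this reduces via the Dambis--Dubins--Schwarz representation of $M_\cdot$ to the standard property that Brownian motion does not stick to any fixed level, as already exploited in Remark~\ref{Rem:Continuity-of-stopping-time-vert-homo-semi}.
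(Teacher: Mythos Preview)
Your approach is correct and is exactly what the paper's one-line proof (``an argument similar to that in the proof of Proposition~\ref{Prop:Conv-in-dist-upto-tau1-vert-homo}, combined with Proposition~\ref{Prop:Error_upto_tauCoupling}'') unpacks to: sandwich $\tilde\tau_\ell$ between hitting times of the homogeneous difference process using the coupling error bound, then import the convergence from the proof of Lemma~\ref{Lem:Wlt-integration-weak-convergence}. One small correction: your bound on the summands between $\tau^{\downarrow}_\ell$ and $\tau^{\uparrow}_\ell$ is not valid as stated, since after $\tau^{\downarrow}_\ell$ the process $A^\ell_i - B^\ell_i$ need not stay below $E_{2,i}+E_{3,i}+2\ell^\alpha$; the discrepancy is still $o_p(\ell^3)$, but because $(\tau^{\uparrow}_\ell-\tau^{\downarrow}_\ell)/\ell^2\to 0$ in probability while $\sup_{i\leq \tau^{\uparrow}_\ell}(A^\ell_i-B^\ell_i)=O_p(\ell)$ from the weak convergence of $W_\ell$.
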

\begin{proof}[Proof of Lemma~\ref{Lem:convergence-in-dist-upto-tauC-vertically-homogeneous}]
An argument similar to that in the proof of Proposition~\ref{Prop:Conv-in-dist-upto-tau1-vert-homo}, combined with Proposition~\ref{Prop:Error_upto_tauCoupling}, gives us the required result.
\end{proof}

\begin{prop}
Under our assumption
\[
\Big(  \frac{\tilde\tau_\ell}{\ell^2}, \frac{1}{\ell^3} 
    \sum_{i = 0}^{\tilde\tau_\ell -1 } \int_{\B^{\ell}_i}^{\A^{\ell}_i} \mu(i/\ell^2, s/\ell^2) \, \boldsymbol{\calP}_i(\dd s) 
     \Big)\xlongrightarrow{d}
\Big( \vartheta, \int_{0}^{\vartheta} (1+M_x)\lambda(x,0)\mu(x,0)\,\dd x \Big), \qquad\text{ as } \ell\to\infty.
\]
\label{Prop:convergence-in-dist-upto-tauC}
\end{prop}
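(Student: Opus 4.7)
The plan is to deduce the result from Lemma~\ref{Lem:convergence-in-dist-upto-tauC-vertically-homogeneous} by showing that the inhomogeneous Poisson-integral sum differs from the vertically-homogeneous area sum by a quantity that vanishes in probability, and then invoking Slutsky's theorem. Specifically, I would introduce the auxiliary partial-sum random variables
\begin{align*}
S^{\ell}_1 &:= \frac{1}{\ell^3}\sum_{i=0}^{\tilde\tau_\ell-1}\int_{\B^\ell_i}^{\A^\ell_i}\mu(i/\ell^2, s/\ell^2)\,\boldsymbol{\calP}_i(\dd s),\\
S^{\ell}_2 &:= \frac{1}{\ell^3}\sum_{i=0}^{\tilde\tau_\ell-1}(\A^\ell_i-\B^\ell_i)\,\lambda(i/\ell^2,0)\,\mu(i/\ell^2,0),\\
S^{\ell}_3 &:= \frac{1}{\ell^3}\sum_{i=0}^{\tilde\tau_\ell-1}(A^\ell_i-B^\ell_i)\,\lambda(i/\ell^2,0)\,\mu(i/\ell^2,0),
\end{align*}
so that Lemma~\ref{Lem:convergence-in-dist-upto-tauC-vertically-homogeneous} already yields the joint convergence of $(\tilde\tau_\ell/\ell^2, S^{\ell}_3)$ to the target distribution. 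It therefore suffices to show that both $S^{\ell}_1 - S^{\ell}_2$ and $S^{\ell}_2 - S^{\ell}_3$ tend to zero in probability as $\ell\to\infty$.

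I would begin by harvesting tightness for free: since $\tilde\tau_\ell/\ell^2 \xlongrightarrow{d} \vartheta$ by Lemma~\ref{Lem:convergence-in-dist-upto-tauC-vertically-homogeneous}, the family $\{\tilde\tau_\ell/\ell^2\}_{\ell\geq 1}$ is tight, so for every $\delta>0$ there exists $t_\delta<\infty$ with $\P(\tilde\tau_\ell > \ell^2 t_\delta) < \delta$ for all sufficiently large $\ell$. On the event $\{\tilde\tau_\ell \leq \ell^2 t_\delta\}$, $|S^{\ell}_1-S^{\ell}_2|$ is dominated by the supremum-over-$k\leq \lfloor \ell^2 t_\delta\rfloor$ expression controlled in Proposition~\ref{Prop:Error_in_area_approximation}. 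Applying that proposition with $t=t_\delta$ gives, for any $\epsilon>0$ and all sufficiently large $\ell$, the bound $\P(|S^{\ell}_1 - S^{\ell}_2|>\epsilon,\, \tilde\tau_\ell \leq \ell^2 t_\delta) \leq (\mathsf C_{t_\delta}+1)/(\epsilon \ell)$, which together with $\P(\tilde\tau_\ell > \ell^2 t_\delta) < \delta$ and letting $\delta \downarrow 0$ yields $S^{\ell}_1 - S^{\ell}_2 \to 0$ in probability.

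For the comparison between $S^{\ell}_2$ and $S^{\ell}_3$, a triangle inequality combined with the global bounds $\lambda \leq \lambda_{\max}$ and $\mu \leq \mu_{\max}$ gives
\[
|S^{\ell}_2 - S^{\ell}_3| \leq \frac{\lambda_{\max}\mu_{\max}\,\tilde\tau_\ell}{\ell^3}\Big(\sup_{0 \leq i \leq \tilde\tau_\ell}|\A^\ell_i - A^\ell_i| + \sup_{0 \leq i \leq \tilde\tau_\ell}|\B^\ell_i - B^\ell_i|\Big).
\]
On $\{\tilde\tau_\ell \leq \ell^2 t_\delta\}$, I would invoke Proposition~\ref{Prop:Error_upto_tauCoupling} with $\alpha = 3/4$ (so that $2\alpha-1 = 1/2 > 0$) to conclude that each supremum is $o_p(\ell^{3/4})$, whence $|S^{\ell}_2 - S^{\ell}_3| = O_p(\ell^{-1/4})$, which vanishes in probability. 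The only, rather mild, technical obstacle is that the sums run up to the random index $\tilde\tau_\ell$ rather than a deterministic one, but this is handled cleanly by combining the tightness of $\tilde\tau_\ell/\ell^2$ with the uniform-in-$k$ form of the approximation bounds of Propositions~\ref{Prop:Error_in_area_approximation} and~\ref{Prop:Error_upto_tauCoupling}.
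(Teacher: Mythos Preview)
Your proposal is correct and follows essentially the same approach as the paper: both arguments decompose the discrepancy into the two steps $S^\ell_1 - S^\ell_2$ and $S^\ell_2 - S^\ell_3$, handle the first via Proposition~\ref{Prop:Error_in_area_approximation}, the second via Proposition~\ref{Prop:Error_upto_tauCoupling} with some $\alpha\in(1/2,1)$, and then conclude with Lemma~\ref{Lem:convergence-in-dist-upto-tauC-vertically-homogeneous} and Slutsky. Your treatment of the random upper index through tightness of $\tilde\tau_\ell/\ell^2$ is slightly more explicit than the paper's, but the substance is identical.
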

\begin{proof}[Proof of Proposition~\ref{Prop:convergence-in-dist-upto-tauC}]
From Proposition~\ref{Prop:Error_in_area_approximation}, we have that for any $t,\epsilon>0$,
\begin{align}
\lim_{\ell\to\infty} \P  \Big(  \frac{\tilde\tau_\ell}{\ell^2} \leq t, \frac{1}{\ell^3} \Big| 
    \sum_{i = 0}^{\tilde\tau_\ell -1 } \int_{\B^{\ell}_i}^{\A^{\ell}_i}  \mu(i/\ell^2, s/\ell^2) \, \boldsymbol{\calP}_{\ell, i}(\dd s)  - \sum_{i = 0}^{\tilde\tau_\ell -1} (\A^{\ell}_i- \B^{\ell}_i) \lambda(i/\ell^2,0) \mu(i/\ell^2,0)  \Big|>\epsilon  \Big)
=0.
\label{Equ:Error_in_area_approximation-for-convergence-in-dist-upto-tauC}
\end{align}
Also observe that, on the event
\[
\Big\{  \tilde\tau_\ell \leq \ell^2 t, \sup_{0\leq i\leq \tilde\tau_\ell} \big| \A^\ell_i- A^\ell_i \big| \leq \epsilon \ell^\alpha, \sup_{0\leq i\leq \tilde\tau_\ell} \big| \B^\ell_i- B^\ell_i \big| \leq \epsilon \ell^\alpha \Big\},
\]
we have that
\begin{align*}
    &\Big|\sum_{i = 0}^{\tilde\tau_\ell -1}  (\A^{\ell}_i- \B^{\ell}_i) \lambda(i/\ell^2,0) \mu(i/\ell^2,0) - \sum_{i = 0}^{\tilde\tau_\ell -1} (A^{\ell}_i- B^{\ell}_i) \lambda(i/\ell^2,0) \mu(i/\ell^2,0) \Big|\\
&\leq \sum_{i = 0}^{\tilde\tau_\ell -1} \big( \big|\A^{\ell}_i - A^{\ell}_i \big|+\big|\B^{\ell}_i - B^{\ell}_i\big| \big) \lambda(i/\ell^2,0) \mu(i/\ell^2,0)\\
& \leq 2\epsilon \ell^\alpha \sum_{i = 0}^{\tilde\tau_\ell -1} \lambda(i/\ell^2,0) \mu(i/\ell^2,0)\leq 2\epsilon \ell^{(2+\alpha)}\Big(\frac{1}{\ell^2} \sum_{i = 0}^{\ell^2t} \lambda(i/\ell^2,0) \mu(i/\ell^2,0) \Big) 
\leq \epsilon\ell^3,
\end{align*}
for $\alpha\in(1/2,1)$ and all large $\ell$. Therefore, by Proposition~\ref{Prop:Error_upto_tauCoupling}, for any $\alpha\in(1/2,1)$ and any $t,\epsilon>0$,
\begin{align}
   \lim_{\ell\to\infty} \P  \Big(  \frac{\tilde\tau_\ell}{\ell^2} \leq t, \frac{1}{\ell^3} \Big|\sum_{i = 0}^{\tilde\tau_\ell -1} (\A^{\ell}_i- \B^{\ell}_i) \lambda(i/\ell^2,0) \mu(i/\ell^2,0) - \sum_{i = 0}^{\tilde\tau_\ell -1} (A^{\ell}_i- B^{\ell}_i) \lambda(i/\ell^2,0) \mu(i/\ell^2,0)  \Big|>\epsilon  \Big) =0.
   \label{Error_upto_tauCoupling-for-convergence-in-dist-upto-tauC}
\end{align}
Hence, combining~\eqref{Equ:Error_in_area_approximation-for-convergence-in-dist-upto-tauC} and~\eqref{Error_upto_tauCoupling-for-convergence-in-dist-upto-tauC} with Lemma~\ref{Lem:convergence-in-dist-upto-tauC-vertically-homogeneous} yields the desired result.
\end{proof}


\subsubsection{Proofs for times after $\tilde\tau_\ell$}
After \(\tilde\tau_\ell\), we are in the following typical situation.
\begin{lemma}\label{Lem:starting_situation_after_tau_C}
For all \(j \in \{0, 1\}\) and \(\alpha \in \big(1/2, 2/3\big) \), we have that
    \begin{align*}
        \P\Big( \A^{\ell}_{\tilde\tau_\ell + j} - \B^{\ell}_{\tilde\tau_\ell + j} 
        \geq \ell^{\alpha} \Big) 
        \rightarrow
        0, \qquad\text{ as } \ell\to\infty.
    \end{align*} 
\end{lemma}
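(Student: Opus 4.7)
The plan is to prove this by first establishing an a priori upper bound on $\tilde\tau_\ell$, then applying the coupling estimate of Proposition~\ref{Prop:Error_upto_tauCoupling}, and finally decomposing according to which of the two terms in the minimum defining $\tilde\tau_\ell$ is binding.

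\emph{Step 1 (a priori bound).} At time $\tau_\ell$ the homogeneous point process has no Poisson points in $[B^\ell_{\tau_\ell},A^\ell_{\tau_\ell}]$ by construction, which forces the nearest-neighbor updates of $A^\ell$ and $B^\ell$ to coincide and hence $A^\ell_{\tau_\ell+1}=B^\ell_{\tau_\ell+1}$. In particular $A^\ell_{\tau_\ell+1}-B^\ell_{\tau_\ell+1}=0<E_{2,\tau_\ell+1}+E_{3,\tau_\ell+1}$ almost surely, so $\tilde\tau_\ell\leq\tau_\ell+1$. Combining Propositions~\ref{Prop:tau_1_vert_inhom_decay} and~\ref{Prop:tau0-tau1-decay-rate-vertical-homo-semi}, $\tau_\ell=O(\ell^2)$ with high probability, and for any $\eta>0$ one can fix $t$ so that $\P(\tilde\tau_\ell>\ell^2 t)<\eta$ uniformly in $\ell$.

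\emph{Step 2 (coupling error).} Choose $\beta\in(1/2,\alpha)$, which is possible since $\alpha>1/2$. Proposition~\ref{Prop:Error_upto_tauCoupling} with exponent $\beta$ and $\epsilon=1$ gives
\[
\P\Bigl(\sup_{0\leq i\leq \tilde\tau_\ell}|\A^\ell_i-A^\ell_i|>\ell^\beta,\, \tilde\tau_\ell\leq\ell^2 t\Bigr)\leq\mathfrak C_t\,\ell^{1-2\beta}\longrightarrow 0,
\]
and similarly for $|\B^\ell_i-B^\ell_i|$. A union bound also yields $\max_{0\leq i\leq\ell^2 t}(E_{2,i}+E_{3,i})=O(\log\ell)$ with high probability, since the $E_{j,i}$ are i.i.d.\ $\Exp(\lambda_{\min})$.

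\emph{Step 3 (decomposition for $j=0$).} By definition of $\tilde\tau_\ell$, either $\A^\ell_{\tilde\tau_\ell}-\B^\ell_{\tilde\tau_\ell}<E_{2,\tilde\tau_\ell}+E_{3,\tilde\tau_\ell}$ or $A^\ell_{\tilde\tau_\ell}-B^\ell_{\tilde\tau_\ell}<E_{2,\tilde\tau_\ell}+E_{3,\tilde\tau_\ell}$. In the first case the bound is direct. In the second, the triangle inequality combined with the coupling estimate gives
\[
\A^\ell_{\tilde\tau_\ell}-\B^\ell_{\tilde\tau_\ell}
\leq A^\ell_{\tilde\tau_\ell}-B^\ell_{\tilde\tau_\ell}+|\A^\ell_{\tilde\tau_\ell}-A^\ell_{\tilde\tau_\ell}|+|\B^\ell_{\tilde\tau_\ell}-B^\ell_{\tilde\tau_\ell}|
\leq E_{2,\tilde\tau_\ell}+E_{3,\tilde\tau_\ell}+2\ell^\beta.
\]
On the good event, the right-hand side is $O(\log\ell)+2\ell^\beta=o(\ell^\alpha)$ since $\beta<\alpha$, which handles $j=0$. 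For $j=1$, the one-step increment of the gap satisfies
\[
\bigl|(\A^\ell_{\tilde\tau_\ell+1}-\B^\ell_{\tilde\tau_\ell+1})-(\A^\ell_{\tilde\tau_\ell}-\B^\ell_{\tilde\tau_\ell})\bigr|
\leq\tfrac{1}{2}\bigl(\overline{\X}^\ell_{\tilde\tau_\ell}+\underline{\X}^\ell_{\tilde\tau_\ell}+\overline{\Y}^\ell_{\tilde\tau_\ell}+\underline{\Y}^\ell_{\tilde\tau_\ell}\bigr),
\]
and each of the four nonnegative quantities on the right is stochastically dominated by an $\Exp(\lambda_{\min})$ variable, so their sum has maximum over $i\leq\ell^2 t$ of size $O(\log\ell)$ with high probability. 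Adding this to the $j=0$ bound finishes the argument.

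The main obstacle I expect is Step~1: $\tilde\tau_\ell$ involves the auxiliary exponentials $E_{j,i}$ from the coupling and so is not a quantity analyzed in the previous sections. Once the clean identification $\tilde\tau_\ell\leq\tau_\ell+1$ is verified, however, everything else reduces to the coupling estimate of Proposition~\ref{Prop:Error_upto_tauCoupling} and standard tail bounds for exponentials.
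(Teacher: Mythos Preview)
Your proof is correct and follows essentially the same approach as the paper: use the coupling estimate (Proposition~\ref{Prop:Error_upto_tauCoupling}) together with the defining condition of $\tilde\tau_\ell$ and the triangle inequality for $j=0$, and add a one-step increment bound via exponential tails for $j=1$. The only minor difference is in Step~1: you obtain tightness of $\tilde\tau_\ell/\ell^2$ through the elementary comparison $\tilde\tau_\ell\leq\tau_\ell+1$ (which is valid---in fact the coupling construction even yields $\tilde\tau_\ell\leq\tau'_\ell$, since for $i<\tilde\tau_\ell$ the placed endpoints $A^\ell_i-E_{2,i}$ and $B^\ell_i+E_{3,i}$ lie in $[B^\ell_i,A^\ell_i]$), whereas the paper directly cites the weak convergence of $\tilde\tau_\ell/\ell^2$ from Proposition~\ref{Prop:convergence-in-dist-upto-tauC}.
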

\begin{proof}
For $j=0$, the result follows from Proposition~\ref{Prop:Error_upto_tauCoupling} and the tightness of $\{\tilde{\tau}_\ell/\ell^2\colon \ell>0\}$, as implied by Proposition~\ref{Prop:convergence-in-dist-upto-tauC}. The case $j=1$ is implied by the additional bound that, for all $x, \ell \geq 1$,
\begin{align}
\P\Big(\max_{0 \leq i \leq \lfloor x\ell^2 \rfloor -1}  \overline{\X}^\ell_i \vee \underline{\X}^\ell_i \vee \overline{\Y}^\ell_i \vee \underline{\Y}^\ell_i > (\log \ell)^2 \Big)
\leq
4\lfloor x\ell^2 \rfloor \e^{-\lambda_{\min} (\log \ell)^2},
    \label{Equ:sup_of_A_B_bound}
\end{align}
which tends to zero as $\ell \to \infty$.
\end{proof}

We will now employ the following coupling to control the behavior in the vertically inhomogeneous case after \(\tilde\tau_\ell\).

Let \(\{\widetilde{\mathcal{Q}}^{(a, b, j, \ell)}_{1, i}\colon i \geq 0\}, \{\widetilde{\mathcal{Q}}^{(a, b, j, \ell)}_{2, i}\colon i \geq 0\}, \{\widetilde{\mathcal{Q}}^{(a, b, j, \ell)}_{3, i}\colon i \geq 0\}\) be independent Poisson point processes, also independent from all constructions from before, with intensities
\begin{align*}
    \widetilde{\nu}^{(a, b, j, \ell)}_{1, i}(s) 
    &:= \lambda((j+i)/\ell^2, s/\ell^2) \land \lambda((j+i)/\ell^2, (a+b)/(2\ell^2)), \\
    \widetilde{\nu}^{(a, b, j, \ell)}_{2, i}(s) 
    &:= \big(\lambda((j+i)/\ell^2, s/\ell^2) - \lambda((j+i)/\ell^2, (a+b)/(2\ell^2))\big)_+, \text{ and} \\
    \widetilde{\nu}^{(a, b, j, \ell)}_{3, i}(s) 
    &:= \big(\lambda((j+i)/\ell^2, (a+b)/(2\ell^2)) - \lambda((j+i)/\ell^2, s/\ell^2)\big)_+ .
\end{align*}
We couple the processes \(\{\widetilde{\boldsymbol{\calP}}^{(a, b, j, \ell)}_i\colon i\geq 0\}\) and \(\{\widetilde{\calP}^{(a, b, j, \ell)}_i\colon i\geq 0\}\) as follows
\begin{align*}
    \widetilde{\boldsymbol{\calP}}^{(a, b, j, \ell)}_i &:= \widetilde{\mathcal{Q}}^{(a, b, j, \ell)}_{1, i} \cup \widetilde{\mathcal{Q}}^{(a, b, j, \ell)}_{2, i}\qquad\text{ and }\qquad
    \widetilde{\calP}^{(a, b, j, \ell)}_i := \widetilde{\mathcal{Q}}^{(a, b, j, \ell)}_{1, i} \cup \widetilde{\mathcal{Q}}^{(a, b, j, \ell)}_{3, i}
\end{align*} and define the random variables $\A^{(a, b, j, \ell)}_i$, $\B^{(a, b, j, \ell)}_i$, $\overline{\X}^{(a, b, j, \ell)}_i$, $\underline{\X}^{(a, b, j, \ell)}_i$, $\overline{\Y}^{(a, b, j, \ell)}_i$, and $\underline{\Y}^{(a, b, j, \ell)}_i$ as well as their vertically homogeneous counterparts $A^{(a, b, j, \ell)}_i$, $B^{(a, b, j, \ell)}_i$, $\overline{X}^{(a, b, j, \ell)}_i$, $\underline{X}^{(a, b, j, \ell)}_i$, $\overline{Y}^{(a, b, j, \ell)}_i$, and $\underline{Y}^{(a, b, j, \ell)}_i$ in this context analogous to Section~\ref{sec_prelim_coupling_vert_inhom}. 
Note that we start the vertically homogeneous and inhomogeneous processes both at heights \(\A_0^{(a, b, j, \ell)} := a, A_0^{(a, b, j, \ell)} := a\) and \(\B_0^{(a, b, j, \ell)} := b, B_0^{(a, b, j, \ell)} := b\).
Moreover we define $A^{(a, b, j, \ell)}_{1, i}$, $B^{(a, b, j, \ell)}_{1, i}$, $\overline{X}^{(a, b, j, \ell)}_{1, i}$, $\underline{X}^{(a, b, j, \ell)}_{1, i}$, $\overline{Y}^{(a, b, j, \ell)}_{1, i}$, and $\underline{Y}^{(a, b, j, \ell)}_{1, i}$ analogously by letting \(\{\widetilde{\mathcal{Q}}^{(a, b, j, \ell)}_{1, i}\colon i \geq 0\}\) play the role of \(\{\widetilde{\boldsymbol{\calP}}^{(a, b, j, \ell)}_i\colon i\geq 0\}\), respectively \(\{\widetilde{\calP}^{(a, b, j, \ell)}_i\colon i\geq 0\}\).

This coupling will be used via the following result.
\begin{lemma}\label{le_coupling_after_tau_C_success_probability}
    Let \(\alpha \in (1/2, 2/3)\). Then, for all \(1 \leq a-b \leq \ell^{\alpha}\) (uniformly in \(a, b, j\)), we have that
    \begin{align*}
        \P\big(\boldsymbol{\mathsf \tau}_{(a, b, j, \ell)} \neq \tau_{(a, b, j, \ell)}\big) 
       \rightarrow
        0, \qquad\text{ as } \ell\to\infty.
    \end{align*} 
\end{lemma}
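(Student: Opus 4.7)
The plan is to construct a high-probability good event $G_\ell$ on which the two coupled processes evolve identically up to a time $T_\ell\gg \tau_{(a,b,j,\ell)}$, forcing $\boldsymbol{\mathsf{\tau}}_{(a,b,j,\ell)}=\tau_{(a,b,j,\ell)}$. The decomposition $\widetilde{\boldsymbol{\calP}}^{(a,b,j,\ell)}_i=\widetilde{\mathcal{Q}}^{(a,b,j,\ell)}_{1,i}\cup \widetilde{\mathcal{Q}}^{(a,b,j,\ell)}_{2,i}$ and $\widetilde{\calP}^{(a,b,j,\ell)}_i=\widetilde{\mathcal{Q}}^{(a,b,j,\ell)}_{1,i}\cup \widetilde{\mathcal{Q}}^{(a,b,j,\ell)}_{3,i}$ into a shared ingredient and two mutually independent ``extras'' is the key feature: the walks can split, and the stopping times can disagree, only if an extra point lands in a dynamically relevant interval.

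Set $T_\ell:=\ell^{2\alpha}(\log\ell)^{10}$ and $L_\ell:=\ell^{\alpha}(\log\ell)^{20}$, and let $G_\ell$ be the event that (i) $\tau_{(a,b,j,\ell)}\le T_\ell$; (ii) each of the common-part step sizes $\overline X^{(a,b,j,\ell)}_{1,i},\underline X^{(a,b,j,\ell)}_{1,i},\overline Y^{(a,b,j,\ell)}_{1,i},\underline Y^{(a,b,j,\ell)}_{1,i}$ stays below $(\log\ell)^2$ for every $i\le T_\ell$; and (iii) both homogeneous walks $A^{(a,b,j,\ell)}_i$ and $B^{(a,b,j,\ell)}_i$ remain in $[(a+b)/2-L_\ell,(a+b)/2+L_\ell]$ for every $i\le T_\ell$. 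Property~(i) follows from Proposition~\ref{Prop:tau_1_vert_inhom_decay} applied with $\eta=a-b\le\ell^\alpha$, combined with the analogue of Proposition~\ref{Prop:tau0-tau1-decay-rate-vertical-homo-semi} for general starting configurations (whose proof carries over with only cosmetic changes). Property~(ii) is a maximum-of-exponentials bound of the form~\eqref{Equ:sup_of_A_B_bound}, using $\widetilde\nu^{(a,b,j,\ell)}_{1,i}\ge\lambda_{\min}$. For~(iii), the increments of $A^{(a,b,j,\ell)}$ and $B^{(a,b,j,\ell)}$ are independent, symmetric, of variance $\le 1/(2\lambda_{\min}^2)$, so Doob's maximal inequality yields $\P(\max_{i\le T_\ell}|A^{(a,b,j,\ell)}_i-a|>L_\ell)=O(T_\ell/L_\ell^2)=O((\log\ell)^{-30})$, and likewise for $B^{(a,b,j,\ell)}$.

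On $G_\ell$ the event $\{\boldsymbol{\mathsf{\tau}}_{(a,b,j,\ell)}\ne\tau_{(a,b,j,\ell)}\}$ splits into two failure mechanisms. \textbf{Mechanism A} (walks split): the walks can only disagree at some step $i\le T_\ell$ if a point of $\widetilde{\mathcal{Q}}^{(a,b,j,\ell)}_{2,i}\cup \widetilde{\mathcal{Q}}^{(a,b,j,\ell)}_{3,i}$ lands inside one of the four common-part search intervals, whose total length is $\le 4(\log\ell)^2$ by~(ii). Since these intervals lie in $[(a+b)/2-L_\ell,(a+b)/2+L_\ell]$ by~(iii), Lipschitz continuity of $\lambda$ yields
\[
\widetilde\nu^{(a,b,j,\ell)}_{2,i}(s)+\widetilde\nu^{(a,b,j,\ell)}_{3,i}(s)\le \Xi L_\ell/\ell^2,
\]
so the total expected count over $T_\ell$ steps is $4\Xi T_\ell L_\ell(\log\ell)^2/\ell^2=O(\ell^{3\alpha-2}(\log\ell)^c)$. \textbf{Mechanism B} (walks agree, stopping times differ): if the walks coincide throughout, then $\boldsymbol{\mathsf{\tau}}\ne\tau$ forces some step $i\le\tau$ at which simultaneously $\widetilde{\mathcal{Q}}^{(a,b,j,\ell)}_{1,i}\cap[B_i,A_i]=\emptyset$ and $(\widetilde{\mathcal{Q}}^{(a,b,j,\ell)}_{2,i}\cup \widetilde{\mathcal{Q}}^{(a,b,j,\ell)}_{3,i})\cap[B_i,A_i]\ne\emptyset$. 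Writing $D_i:=A^{(a,b,j,\ell)}_i-B^{(a,b,j,\ell)}_i\le 3L_\ell$, the mutual independence of the three $\widetilde{\mathcal{Q}}^{(a,b,j,\ell)}_{k,i}$ conditional on $(A_i,B_i)$ provides a per-step bound
\[
\e^{-\lambda_{\min}D_i}\cdot\Xi L_\ell D_i/\ell^2\le \Xi L_\ell/(\e\lambda_{\min}\ell^2),
\]
uniformly in $D_i$, so summing over $T_\ell$ steps gives once more $O(\ell^{3\alpha-2})$.

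Since $\alpha<2/3$ forces $3\alpha-2<0$, both mechanisms vanish as $\ell\to\infty$; together with $\P(G_\ell^\complement)=o(1)$ and the uniformity of every estimate in $(a,b,j)$, Markov's inequality and a union bound yield the lemma. The technical heart, and the reason the range $\alpha\in(1/2,2/3)$ is not wider, is the trade-off in Mechanism~B: a naive union bound over \emph{any} extra point in $[B_i,A_i]$ only gives $O(\ell^{4\alpha-2})$, which would restrict us to $\alpha<1/2$; the gain up to $\alpha<2/3$ is bought by exploiting the independence of $\widetilde{\mathcal{Q}}_{1,i}$ from $\widetilde{\mathcal{Q}}_{2,i}\cup \widetilde{\mathcal{Q}}_{3,i}$, which introduces the factor $\e^{-\lambda_{\min}D_i}$ that absorbs one power of $D_i\le L_\ell$ through the bound $\sup_{x\ge 0} x\e^{-\lambda_{\min}x}=(\e\lambda_{\min})^{-1}$.
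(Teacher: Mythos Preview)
Your argument is essentially correct and follows the same strategy as the paper: bound the probability that any point of the ``extra'' processes $\widetilde{\mathcal{Q}}_{2,i}\cup\widetilde{\mathcal{Q}}_{3,i}$ lands in the common-part search windows over a horizon of order $\ell^{2\alpha}$, using the Lipschitz bound on the intensity together with a displacement bound of order $\ell^\alpha$. Two comments are worth making.

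First, your condition~(iii) controls the homogeneous walks $A^{(a,b,j,\ell)}_i,B^{(a,b,j,\ell)}_i$, but the search intervals in Mechanism~A are centred at the common-part walks $A^{(a,b,j,\ell)}_{1,i},B^{(a,b,j,\ell)}_{1,i}$, which are built from the vertically \emph{in}homogeneous process $\widetilde{\mathcal{Q}}_{1,i}$ and hence are not themselves martingales. The paper closes this gap via the triangle inequality, bounding $|A_{1,i}-A_i|$ by an analogue of Proposition~\ref{Prop:Error_upto_tauCoupling}; your implicit ``until the first extra lands all three walks coincide'' would also work, but needs to be stated.

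Second, Mechanism~B is in fact vacuous and your ``technical heart'' remark is misplaced. If the walks $\A,A$ agree throughout, then (almost surely) no extra point lies in any common-part search interval; but at a step with $\widetilde{\mathcal{Q}}_{1,i}([B_i,A_i])=0$ the nearest common-part point below $A_i$ lies below $B_i$, so the search interval $[\,\underline U_{1,i},A_i\,]$ already covers all of $[B_i,A_i]$. Hence both $\widetilde{\calP}_i$ and $\widetilde{\boldsymbol{\calP}}_i$ have zero points there and the stopping times cannot disagree. The restriction $\alpha<2/3$ is therefore forced entirely by Mechanism~A, and the improvement from $\ell^{4\alpha-2}$ to $\ell^{3\alpha-2}$ comes from restricting to search windows of polylogarithmic length, not from the $e^{-\lambda_{\min}D_i}$ factor.
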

\begin{proof}
    In order to see closeness, we observe that \(\{\widetilde{\mathcal{Q}}^{(a, b, j, \ell)}_{2, i}\colon i \geq 0\}, \{\widetilde{\mathcal{Q}}^{(a, b, j, \ell)}_{3, i}\colon i \geq 0\}\) will not present any contributing point with high enough probability. For this, note first that
\begin{align}\label{Equ:sup_is_controlled_by_CLT_behaviour_for_A1i}
        &\P\Big(\sup_{i \leq t\ell^{2\alpha}} (|A^{(a, b, j, \ell)}_{1, i} - A^{(a, b, j, \ell)}_{1, 0}|\lor|B^{(a, b, j, \ell)}_{1, i} - B^{(a, b, j, \ell)}_{1, 0}|) \geq C \ell^{\alpha}\Big) \nonumber\\
        &\leq 
        \P\Big(\sup_{i \leq t\ell^{2\alpha}} \vert A^{(a, b, j, \ell)}_{1, i} - A^{(a, b, j, \ell)}_{i} \vert \geq (C/4) \ell^{\alpha}\Big) 
        + \P\Big(\sup_{i \leq t\ell^{2\alpha}} \vert B^{(a, b, j, \ell)}_{1, i} - B^{(a, b, j, \ell)}_{i} \vert \geq (C/4) \ell^{\alpha}\Big) \nonumber\\
        &+ \P\Big(\sup_{i \leq t\ell^{2\alpha}} (\vert A^{(a, b, j, \ell)}_{i} - A^{(a, b, j, \ell)}_{0} \vert \geq (C/4) \ell^{\alpha}\Big) 
        + \P\Big(\sup_{i \leq t\ell^{2\alpha}} (\vert B^{(a, b, j, \ell)}_{i} - B^{(a, b, j, \ell)}_{0} \vert \geq (C/4) \ell^{\alpha}\Big)\leq \frac{C_t}{C}
    \end{align}
    by an analogous bound to~\eqref{Equ:Error_upto_tauCoupling<ell^2t} and by Doob's martingale inequality, with some \(t\)-dependent constant \(0 < C_t < \infty\).
    Further, we have
    \begin{align*}
        &\mathbb{P}\Big(\big(\widetilde{\mathcal{Q}}^{(a, b, j, \ell)}_{2, i} \cup \widetilde{\mathcal{Q}}^{(a, b, j, \ell)}_{3, i}\big)\Big( \big[A^{(a, b, j, \ell)}_{1, i} - \underline{\X}^{(a, b, j, \ell)}_{1, i}, A^{(a, b, j, \ell)}_{1, i}\big] \cup \big[A^{(a, b, j, \ell)}_{1, i}, A^{(a, b, j, \ell)}_{1, i} + \overline{\X}^{(a, b, j, \ell)}_{1, i}\big] \\
        &\qquad \cup  \big[B^{(a, b, j, \ell)}_{1, i} - \underline{\Y}^{(a, b, j, \ell)}_{1, i}, B^{(a, b, j, \ell)}_{1, i}\big] \cup \big[B^{(a, b, j, \ell)}_{1, i}, B^{(a, b, j, \ell)}_{1, i} + \overline{\Y}^{(a, b, j, \ell)}_{1, i}\big] \Big) = \emptyset\,\, \forall i \leq t\ell^{2 \alpha}, \\
        &\qquad \qquad\sup_{i \leq t\ell^{2\alpha}} \big(|A^{(a, b, j, \ell)}_{1, i} - A^{(a, b, j, \ell)}_{1, 0}|\lor|B^{(a, b, j, \ell)}_{1, i} - B^{(a, b, j, \ell)}_{1, 0}|\big) \leq C \ell^{\alpha}, \\
        &\qquad \qquad\sup_{i \leq t\ell^{2\alpha}} \big(\underline{\X}^{(a, b, j, \ell)}_{1, i} \lor \overline{\X}^{(a, b, j, \ell)}_{1, i} \lor \underline{\Y}_{1, i} \lor \overline{\Y}^{(a, b, j, \ell)}_{1, i}\big) \leq (\log\ell)^2\Big) \\
        &= \mathbb{E}\Big[\prod_{i = 0}^{t \ell^{2\alpha}} p_i(A_{1, i}, B_{1, i},  \underline{\X}_{1, i}, \overline{\X}_{1, i}, \underline{\Y}_{1, i}, \overline{\Y}_{1, i})
        \mathbbm{1}\Big\{\sup_{i \leq t\ell^{2\alpha}} \big(|A^{(a, b, j, \ell)}_{1, i} - A^{(a, b, j, \ell)}_{1, 0}|\lor|B^{(a, b, j, \ell)}_{1, i} - B^{(a, b, j, \ell)}_{1, 0}|\big) \leq C \ell^{\alpha}\Big\} \\
        &\qquad\qquad \mathbbm{1}\Big\{\sup_{i \leq t\ell^{2\alpha}} \big(\underline{\X}^{(a, b, j, \ell)}_{1, i} \lor \overline{\X}^{(a, b, j, \ell)}_{1, i} \lor \underline{\Y}_{1, i} \lor \overline{\Y}^{(a, b, j, \ell)}_{1, i}\big) \leq (\log\ell)^2\Big\} \Big],
    \end{align*} with
    \begin{align*}
        p_i\big(\widetilde{a}, \widetilde{b}, \underline{\x}, \overline{\x}, \underline{\y}, \overline{\y}\big)
        &= \mathbb{P}\Big(\big(\widetilde{\mathcal{Q}}^{(a, b, j, \ell)}_{2, i} \cup \widetilde{\mathcal{Q}}^{(a, b, j, \ell)}_{3, i}\big)\big( [\widetilde{a} - \underline{\x}, \widetilde{a}] \cup ([\widetilde{a}, \widetilde{a} + \overline{\x}]) \cup  [\widetilde{b} - \underline{\y}, \widetilde{b}] \cup ([\widetilde{b}, \widetilde{b} + \overline{\y}]) \big) = \emptyset\Big) \\
        &\geq 1 - \frac{\Xi}{\ell^2} \Big(\Big|\widetilde{a} - \frac{a+b}{2}\Big|(\underline{\x} + \overline{\x}) + \Big|\widetilde{b} - \frac{a+b}{2}\Big|(\underline{\y} + \overline{\y}) +  \frac{\underline{\x}^2 + \overline{\x}^2 + \underline{\y}^2 + \overline{\y}^2}{2}  \Big),
    \end{align*} 
    since
    \begin{align*}
        \mathbb{P}\Big(\big(\widetilde{\mathcal{Q}}^{(a, b, j, \ell)}_{2, i} \cup \widetilde{\mathcal{Q}}^{(a, b, j, \ell)}_{3, i}\big)\big( [\widetilde{a} - \underline{\x}, \widetilde{a}] \big) = \emptyset\Big)
        &= \exp\Big(- \int_{\widetilde{a} - \underline{\x}}^{\widetilde{a}} \Big|\lambda\Big(\frac{j+i}{\ell^2}, \frac{s}{\ell^2}\Big) - \lambda\Big(\frac{j+i}{\ell^2}, \frac{a+b}{2\ell^2}\Big)\Big|\mathrm{d}s \Big)  \\
        &\geq \exp\Big(-\frac{\Xi}{\ell^2} \Big(\Big|\widetilde{a} - \frac{a+b}{2}\Big|\underline{\x} + \frac{\underline{\x}^2}{2} \Big) \Big)
    \end{align*} 
    and analogous bounds. It follows that
    \begin{align*}
        \mathbb{P}\big(\boldsymbol{\mathsf{\tau}}_{(a, b, j, \ell)}& = \tau_{(a, b, j, \ell)}\big)
        \\
        &\geq  \mathbb{P}\Big(\big(\widetilde{\mathcal{Q}}^{(a, b, j, \ell)}_{2, i} \cup \widetilde{\mathcal{Q}}^{(a, b, j, \ell)}_{3, i}\big)\big( \big[A^{(a, b, j, \ell)}_{1, i} - \underline{\X}^{(a, b, j, \ell)}_{1, i}, A^{(a, b, j, \ell)}_{1, i}\big] \cup \big[A^{(a, b, j, \ell)}_{1, i}, A^{(a, b, j, \ell)}_{1, i} + \overline{\X}^{(a, b, j, \ell)}_{1, i}\big] \\
        &\qquad\qquad \cup  \big[B^{(a, b, j, \ell)}_{1, i} - \underline{\Y}^{(a, b, j, \ell)}_{1, i}, B^{(a, b, j, \ell)}_{1, i}\big] \cup \big[B^{(a, b, j, \ell)}_{1, i}, B^{(a, b, j, \ell)}_{1, i} + \overline{\Y}^{(a, b, j, \ell)}_{1, i}\big] \big) = \emptyset\, \, \forall i \leq t\ell^{2 \alpha}, \\
        &\qquad\qquad  \sup_{i \leq t\ell^{2\alpha}} \big(\big|A^{(a, b, j, \ell)}_{1, i} - A^{(a, b, j, \ell)}_{1, 0}\big|\lor\big|B^{(a, b, j, \ell)}_{1, i} - B^{(a, b, j, \ell)}_{1, 0}\big|\big) \leq C \ell^{\alpha}, \\
        &\qquad\qquad \sup_{i \leq t\ell^{2\alpha}} \big(\underline{\X}^{(a, b, j, \ell)}_{1, i} \lor \overline{\X}^{(a, b, j, \ell)}_{1, i} \lor \underline{\Y}_{1, i} \lor \overline{\Y}^{(a, b, j, \ell)}_{1, i}\big) \leq (\log\ell)^2\Big) \\
        &\qquad - \P\big(\tau_{(a, b, j, \ell)} > t\ell^{2 \alpha}\big)\\
        &\geq K_{\ell, t, C}  \big(1 -C/\ell^{2-\alpha - \epsilon}\big)^{t \ell^{2 \alpha}}- \P\big(\tau_{(a, b, j, \ell)} > t\ell^{2\alpha}\big),
    \end{align*} 
    for \(\epsilon \in \big(0,2/3 - \alpha \big)\) and 
    \begin{align*}
        K_{\ell, t, C}
        &:=
        \P\Big(\sup_{i \leq t\ell^{2\alpha}} \big(\big|A^{(a, b, j, \ell)}_{1, i} - A^{(a, b, j, \ell)}_{1, 0}\big|\lor\big|B^{(a, b, j, \ell)}_{1, i} - B^{(a, b, j, \ell)}_{1, 0}\big|\big) \leq C \ell^{\alpha}\Big) \\
        &\qquad - \P\Big(\sup_{i \leq t\ell^{2\alpha}} \big(\underline{\X}^{(a, b, j, \ell)}_{1, i} \lor \overline{\X}^{(a, b, j, \ell)}_{1, i} \lor \underline{\Y}^{(a, b, j, \ell)}_{1, i} \lor \overline{\Y}^{(a, b, j, \ell)}_{1, i}\big) > (\log\ell)^2 \Big).
    \end{align*}
    Hence,
    \begin{align*}
        \lim_{\ell \to \infty} \mathbb{P}\big(\boldsymbol{\mathsf{\tau}}_{(a, b, j, \ell)} = \tau_{(a, b, j, \ell)}\big)
        &\geq \limsup_{t \to \infty} \limsup_{C \to \infty} \limsup_{\ell \to \infty}\big\{  K_{\ell, t, C}  \big(1 - C/\ell^{2-\alpha - \epsilon}\big)^{t \ell^{2 \alpha}}- \P\big(\tau_{(a, b, j, \ell)} > t\ell^{2\alpha}\big) \big\} \\
        &\geq \limsup_{t \to \infty} \limsup_{C \to \infty} \limsup_{\ell \to \infty}\big\{ 1 - C_t/C-  \P\big(\tau_{(a, b, j, \ell)} > t \ell^{2\alpha}\big) \big\}= 1.
    \end{align*} For the second inequality we used \eqref{Equ:sup_is_controlled_by_CLT_behaviour_for_A1i} and an easy bound analogous to~\eqref{Equ:sup_of_A_B_bound}; the last equality uses tightness of \(\big\{\tau_{(a, b, j, \ell)}/\ell^{2 \alpha}\colon 1\le \ell, 1 \leq a-b \leq \ell^{\alpha}\big\}\), which follows by the exact same argument as we used for Theorem~\ref{Thm:Poisson:asymp-taul}, using the analogues of Propositions~\ref{Prop:tau_1_vert_inhom_decay} and 
\ref{Prop:tau0-tau1-decay-rate-vertical-homo-semi} in the current setting.
\end{proof}

\begin{lemma}\label{le:tau_C_vs_tau_0_inhom}
    Let \(\alpha \in(1/2,2/3)\), Then, for all \(g \colon [0, \infty) \to [0, \infty)\) with \(\ell^{2\alpha}/g(\ell) \rightarrow 0\), as $\ell \to \infty$, we have that
    \begin{align*}
        \frac{1}{g(\ell)}\big|\tilde\tau_\ell - \boldsymbol{\mathsf \tau}_{\ell} \big|
        \xlongrightarrow{p} 0,\qquad\text{as }\ell\to\infty. 
    \end{align*} 
\end{lemma}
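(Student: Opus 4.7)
The plan is to show that $|\tilde\tau_\ell-\boldsymbol{\mathsf\tau}_\ell|/\ell^{2\alpha}$ is tight in $\ell$; since $\ell^{2\alpha}/g(\ell)\to 0$ by hypothesis, this immediately yields convergence in probability to $0$.

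First I would observe that $\tilde\tau_\ell\le\boldsymbol{\mathsf\tau}_\ell$ almost surely. Indeed, for every $i<\tilde\tau_\ell$ the definition of $\tilde\tau_\ell$ forces $E_{2,i}<\A^\ell_i-\B^\ell_i$ (using $E_{3,i}>0$ a.s.), and the coupling construction places the deterministic point $\A^\ell_i-E_{2,i}$ into $\boldsymbol{\calP}_{\ell,i}$. Because $\A^\ell_i-E_{2,i}\in[\B^\ell_i,\A^\ell_i]$, one has $\boldsymbol{\calP}_{\ell,i}([\B^\ell_i,\A^\ell_i])\ge 1$, so $\boldsymbol{\mathsf\tau}_\ell\neq i$. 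It therefore suffices to control $\boldsymbol{\mathsf\tau}_\ell-\tilde\tau_\ell$ from above.

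Second, I would restrict to the event $\mathcal{A}_\ell:=\{\A^\ell_{\tilde\tau_\ell+1}-\B^\ell_{\tilde\tau_\ell+1}\le\ell^\alpha\}$, which has probability tending to $1$ by Lemma~\ref{Lem:starting_situation_after_tau_C}. Since the coupling sets $\boldsymbol{\calP}_{\ell,i}=\calQ_{6,i}$ for $i\ge\tilde\tau_\ell$, and by Remark~\ref{Rem:measurability-and-independence-of-tau_C} the family $\{\calQ_{6,i}\}$ is independent of everything defining $\tilde\tau_\ell$ and of $(\A^\ell_{\tilde\tau_\ell+1},\B^\ell_{\tilde\tau_\ell+1})$, the strong Markov property gives that, conditionally on the past up to time $\tilde\tau_\ell+1$, the remaining time $\boldsymbol{\mathsf\tau}_\ell-\tilde\tau_\ell-1$ has the same distribution as $\boldsymbol{\mathsf\tau}_{(a,b,j,\ell)}$ with $(a,b,j)=(\A^\ell_{\tilde\tau_\ell+1},\B^\ell_{\tilde\tau_\ell+1},\tilde\tau_\ell+1)$. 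Since on $\mathcal{A}_\ell$ one has $a-b\le\ell^\alpha$, Lemma~\ref{le_coupling_after_tau_C_success_probability} then identifies $\boldsymbol{\mathsf\tau}_{(a,b,j,\ell)}$ with $\tau_{(a,b,j,\ell)}$ with probability $1-o(1)$, uniformly in such $(a,b,j)$.

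The final ingredient is uniform tightness of $\tau_{(a,b,j,\ell)}/\ell^{2\alpha}$ over $1\le a-b\le\ell^\alpha$ and $j\ge 0$. I would split $\tau_{(a,b,j,\ell)}=\tau'_{(a,b,j,\ell)}+(\tau_{(a,b,j,\ell)}-\tau'_{(a,b,j,\ell)})$. For the first summand, Proposition~\ref{Prop:tau_1_vert_inhom_decay} immediately yields $\P(\tau'_{(\eta,j,\ell)}\ge K\ell^{2\alpha})\le\bar C\,\eta/(K\ell^{2\alpha})^{1/2}\le\bar C/\sqrt{K}$ whenever $\eta\le\ell^\alpha$. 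For the second summand, one reruns the layered iteration in the proof of Proposition~\ref{Prop:tau0-tau1-decay-rate-vertical-homo-semi}: the stopping-time ladder $\tau'^{(m)}$ combined with the one-step decay $1-\tilde C(\log\eta)^{-2}$ of Lemma~\ref{Lem:tau_0_is_not_tau_1} shows that after $(\log\ell)^4$ restarts the survival probability is of order $\exp(-c(\log\ell)^2)$, on a total horizon of order $\ell^{2\alpha+\epsilon}$, which is absorbed into $K\ell^{2\alpha}$ for $K$ large.

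The main obstacle is this last point: the faithful re-execution of the proof of Proposition~\ref{Prop:tau0-tau1-decay-rate-vertical-homo-semi} with the smaller initial separation $\ell^\alpha$ in place of $\ell$. One has to verify that the bookkeeping with the auxiliary stopping times $\tau'^{(m)}$ and the envelope event on the increments $\overline{X},\underline{X},\overline{Y},\underline{Y}$ still produces an overall bound that is $o(1)$ on the target horizon $K\ell^{2\alpha}$, rather than on the easier horizon $\ell^\alpha$ appearing in the original statement. This is technically the only step requiring a genuinely new computation.
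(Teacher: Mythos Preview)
Your approach is correct and essentially identical to the paper's: both note $\tilde\tau_\ell \le \boldsymbol{\mathsf\tau}_\ell$, condition on $\mathcal{F}_{\tilde\tau_\ell+1}$ restricted to $\{\A^\ell_{\tilde\tau_\ell+1}-\B^\ell_{\tilde\tau_\ell+1}<\ell^\alpha\}$ (via Lemma~\ref{Lem:starting_situation_after_tau_C}), invoke Lemma~\ref{le_coupling_after_tau_C_success_probability} to replace $\boldsymbol{\mathsf\tau}_{(a,b,j,\ell)}$ by $\tau_{(a,b,j,\ell)}$, and then use tightness of $\tau_{(a,b,j,\ell)}/\ell^{2\alpha}$ for $a-b\le\ell^\alpha$. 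The ``obstacle'' you flag is not new work in the paper's ordering: this tightness is already recorded at the end of the proof of Lemma~\ref{le_coupling_after_tau_C_success_probability} (via the same rerun of Propositions~\ref{Prop:tau_1_vert_inhom_decay} and~\ref{Prop:tau0-tau1-decay-rate-vertical-homo-semi} that you outline), so the paper's proof of the present lemma simply cites it and is correspondingly short.
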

\begin{proof}[Proof of Lemma~\ref{le:tau_C_vs_tau_0_inhom}]
    We have \(\tilde\tau_\ell \leq \boldsymbol{\mathsf \tau}_\ell\).
    On the other hand,
    \begin{align*}
        \P\big( \boldsymbol{\mathsf \tau}_{\ell} &- \tilde\tau_\ell > g(\ell) \big)\\
        &\leq
        \mathbb{E}\Big[\P\big(\boldsymbol{\mathsf \tau}_{\ell} - \tilde\tau_\ell > g(\ell) | \mathcal{F}_{\tilde\tau_\ell + 1} \big) \mathbbm{1}\big\{\A^{\ell}_{\tilde\tau_\ell + 1} - \B^{\ell}_{\tilde\tau_\ell + 1} 
        < \ell^{\alpha} \big\} \Big] + \P\big( \A^{\ell}_{\tilde\tau_\ell + 1} - \B^{\ell}_{\tilde\tau_\ell + 1} 
        \geq \ell^{\alpha} \big),
    \end{align*} 
    where the second summand on the right-hand side goes to zero, by Lemma~\ref{Lem:starting_situation_after_tau_C}, and for the first summand it holds too that
    \begin{align*}
        &\mathbb{E}\Big[\P\big(\boldsymbol{\mathsf \tau}_{\ell} - \tilde\tau_\ell > g(\ell) | \mathcal{F}_{\tilde\tau_\ell + 1} \big) \mathbbm{1}\big\{\A^{\ell}_{\tilde\tau_\ell + 1} - \B^{\ell}_{\tilde\tau_\ell + 1} 
        < \ell^{\alpha} \big\} \Big]  \\
        &= \mathbb{E}\Big[\P\big(\boldsymbol{\mathsf \tau}_{(a, b, j, \ell)}> g(\ell) - 1\big)_{a = \A^{\ell}_{\tilde\tau_\ell + 1}, b = \B^{\ell}_{\tilde\tau_\ell + 1} , j = \tilde\tau_\ell + 1}  \mathbbm{1}\big\{\A^{\ell}_{\tilde\tau_\ell + 1} - \B^{\ell}_{\tilde\tau_\ell + 1} 
        < \ell^{\alpha} \big\} \Big] \\
        &\leq \mathbb{E}\Big[\Big( \P\big(\tau_{(a, b, j, \ell)}> g(\ell) - 1\big) + \P\big(\boldsymbol{\mathsf \tau}_{(a, b, j, \ell)} \neq \tau_{(a, b, j, \ell)}\big) \Big)_{a = \A^{\ell}_{\tilde\tau_\ell + 1}, b = \B^{\ell}_{\tilde\tau_\ell + 1} , j = \tilde\tau_\ell + 1} \mathbbm{1}\big\{\A^{\ell}_{\tilde\tau_\ell + 1} - \B^{\ell}_{\tilde\tau_\ell + 1} < \ell^\alpha\big\} \Big],
    \end{align*} 
    which converges to zero as $\ell \to \infty$, due to Proposition~\ref{Prop:tau_1_vert_inhom_decay} and Lemma~\ref{le_coupling_after_tau_C_success_probability}.
\end{proof}

\begin{lemma}\label{le:small} We have that
    \begin{align*}
        \frac{1}{\ell^3} \big|\tilde\T_{\ell} - \T_{\ell} \big|
        \xlongrightarrow{p} 0,\qquad\text{as }\ell\to\infty.
    \end{align*}
\end{lemma}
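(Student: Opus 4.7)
The plan is to exploit the bound $\tilde\tau_\ell \le \boldsymbol{\mathsf\tau}_\ell$ (established in the proof of Lemma~\ref{le:tau_C_vs_tau_0_inhom}) together with the identity
\[
|\T_\ell - \tilde\T_\ell| \le \mu_{\max} \sum_{i = \tilde\tau_\ell}^{\boldsymbol{\mathsf\tau}_\ell - 1} \boldsymbol{\calP}_{\ell, i}\big([\B^\ell_i, \A^\ell_i]\big),
\]
and then to control the right-hand side by combining Lemma~\ref{le:tau_C_vs_tau_0_inhom}, which limits the number of extra steps, with a pathwise estimate on the interval widths $\A^\ell_i - \B^\ell_i$ on that range, and finally a Poisson first-moment bound.

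Concretely, I would fix $\alpha \in (1/2, 2/3)$ and $\delta \in (0,1)$ small enough that $4\alpha + 2\delta < 3$, and set $g(\ell) := \ell^{2\alpha + \delta}$, so that $\ell^{2\alpha}/g(\ell) \to 0$ as required in Lemma~\ref{le:tau_C_vs_tau_0_inhom}. Introduce the event
\begin{align*}
E_\ell
&:= \big\{\boldsymbol{\mathsf\tau}_\ell \le \ell^{5/2}\big\} \cap \big\{\boldsymbol{\mathsf\tau}_\ell - \tilde\tau_\ell \le g(\ell)\big\} \cap \big\{\A^\ell_{\tilde\tau_\ell + 1} - \B^\ell_{\tilde\tau_\ell + 1} \le \ell^\alpha\big\} \\
&\qquad \cap \Big\{\max_{0 \le i \le \ell^{5/2}} \big(\overline{\X}^\ell_i \vee \underline{\X}^\ell_i \vee \overline{\Y}^\ell_i \vee \underline{\Y}^\ell_i\big) \le (\log \ell)^2\Big\}.
\end{align*}
The first event has high probability by tightness of $\boldsymbol{\mathsf\tau}_\ell/\ell^2$, inherited from that of $\tilde\tau_\ell/\ell^2$ via Proposition~\ref{Prop:convergence-in-dist-upto-tauC} and Lemma~\ref{le:tau_C_vs_tau_0_inhom}; the remaining three events are controlled by Lemma~\ref{le:tau_C_vs_tau_0_inhom} applied to our $g$, Lemma~\ref{Lem:starting_situation_after_tau_C} with $j=1$, and a union bound analogous to~\eqref{Equ:sup_of_A_B_bound}. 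Hence $\P(E_\ell^\complement) \to 0$.

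On $E_\ell$, since each step changes the width by at most $(\overline{\X}^\ell_i + \underline{\X}^\ell_i + \overline{\Y}^\ell_i + \underline{\Y}^\ell_i)/2$, for every $\tilde\tau_\ell + 1 \le i \le \boldsymbol{\mathsf\tau}_\ell$ one obtains the deterministic bound $\A^\ell_i - \B^\ell_i \le \ell^\alpha + 2g(\ell)(\log\ell)^2$. Using the conditional bound $\E[\boldsymbol{\calP}_{\ell, i}([\B^\ell_i, \A^\ell_i]) \mid \A^\ell_i, \B^\ell_i] \le \lambda_{\max}(\A^\ell_i - \B^\ell_i)$ and taking expectations then yields
\[
\E\big[|\T_\ell - \tilde\T_\ell|\,\mathbbm{1}_{E_\ell}\big] \le \mu_{\max}\lambda_{\max}\, g(\ell)\big(\ell^\alpha + 2g(\ell)(\log\ell)^2\big) = O\big(\ell^{3\alpha + \delta} + \ell^{4\alpha + 2\delta}(\log\ell)^2\big),
\]
which is $o(\ell^3)$ by the choice of $\alpha$ and $\delta$. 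Together with $\P(E_\ell^\complement) \to 0$ and Markov's inequality, this gives the claimed convergence in probability.

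The main technical point is the joint control of the number of extra steps and the interval widths on a single event of high probability. Once Lemma~\ref{le:tau_C_vs_tau_0_inhom} bounds the former by $g(\ell) = \ell^{2\alpha + \delta}$ and Lemma~\ref{Lem:starting_situation_after_tau_C} provides an initial-gap ceiling of $\ell^\alpha$, the widths remain of polynomial order strictly below $\ell$, and the resulting double polynomial $\ell^{4\alpha + 2\delta}(\log\ell)^2$ stays of lower order than the normalization $\ell^3$ precisely because $\alpha < 2/3$ leaves enough room to choose a sufficiently small $\delta$.
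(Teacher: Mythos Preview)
Your proposal is correct and follows essentially the same strategy as the paper: control the number of extra steps via Lemma~\ref{le:tau_C_vs_tau_0_inhom}, bound the widths on that range via Lemma~\ref{Lem:starting_situation_after_tau_C} together with an increment bound of type~\eqref{Equ:sup_of_A_B_bound}, and finish with a first-moment/Markov argument. The paper packages this as a probability decomposition with conditioning on $\calF_{\tilde\tau_\ell+1}$ and takes $g(\ell)=\ell^{2\alpha}$, whereas you bundle everything into one high-probability event with $g(\ell)=\ell^{2\alpha+\delta}$; the two are equivalent, and your choice of $g$ is in fact cleaner with respect to the hypothesis of Lemma~\ref{le:tau_C_vs_tau_0_inhom}.
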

\begin{proof}[Proof of Lemma~\ref{le:small}]
    Let \(\alpha \in(1/2, 2/3)\) and also set \(g(\ell) = \ell^{2\alpha}\).
    We have
    \begin{align*}
        \P\big(\big|\tilde\T_{\ell} - \T_{\ell} \big\vert > \epsilon \ell^3 \big)
        &\leq \P\big(\big|\tilde\T_{\ell} - \T_{\ell} \big\vert > \epsilon \ell^3, \vert\tilde\tau_\ell - \boldsymbol{\mathsf \tau}_{\ell}\vert \leq g(\ell), \A^{\ell}_{\tilde\tau_\ell + 1} - \B^{\ell}_{\tilde\tau_\ell + 1} 
        < \ell^{\alpha}   \big) \\
        &\qquad + \P(\big| \tilde\tau_\ell - \boldsymbol{\mathsf \tau}_{\ell}\big\vert > g(\ell)) + \P(\A^{\ell}_{\tilde\tau_\ell + 1} - \B^{\ell}_{\tilde\tau_\ell + 1} 
        \geq \ell^{\alpha})
    \end{align*} and the second and third summand on the right-hand side go to zero by Lemma~\ref{le:tau_C_vs_tau_0_inhom} and Lemma~\ref{Lem:starting_situation_after_tau_C}.
    But
    \begin{align*}
        &\P\big(\big|\tilde\T_{\ell} - \T_{\ell} \big\vert > 2\epsilon \ell^3, \big| \tilde\tau_\ell - \boldsymbol{\mathsf \tau}_{\ell}\big\vert \leq g(\ell), \A^{\ell}_{\tilde\tau_\ell + 1} - \B^{\ell}_{\tilde\tau_\ell + 1} 
        < \ell^{\alpha}   \big) \\
        &\leq \P\Big(\sum_{i = \tilde\tau_\ell + 1}^{\tilde\tau_\ell + g(\ell)} \int_{\B^{\ell}_i}^{\A^{\ell}_i}  \mu_{\max} \, \boldsymbol{\calP}_{\ell,i} (\dd s)  > \epsilon \ell^3, \A^{\ell}_{\tilde\tau_\ell + 1} - \B^{\ell}_{\tilde\tau_\ell + 1} 
        < \ell^{\alpha} \Big) + \P\Big(\int_{\B^{\ell}_{\tilde\tau_\ell}}^{\A^{\ell}_{\tilde\tau_\ell}}  \mu_{\max} \, \boldsymbol{\calP}_{\ell,\tilde\tau_\ell} (\dd s)  > \epsilon \ell^3 \Big),
    \end{align*} 
    where
    \begin{align*}
    \P\Big(\int_{\B^{\ell}_{\tilde\tau_\ell}}^{\A^{\ell}_{\tilde\tau_\ell}}  \mu_{\max} \, \boldsymbol{\calP}_{\ell, \tilde\tau_\ell} (\dd s)  > \epsilon \ell^3 \Big)
    &= \mathbb{E}\Big[ \P\Big(\int_{b}^{a}  \mu_{\max} \, {\calQ}_{i, 6} (\dd s)  > \epsilon \ell^3 \Big)_{a = \A^{\ell}_{\tilde\tau_\ell}, b = \B^{\ell}_{\tilde\tau_\ell}, i = \tilde\tau_\ell} \Big] \\
    &\leq \mathbb{E}\Big[\tfrac{\lambda_{\max} \mu_{\max} (\A^{\ell}_{\tilde\tau_\ell} - \B^{\ell}_{\tilde\tau_\ell})}{\epsilon \ell^3} \land 1 \Big]\leq \frac{\lambda_{\max} \mu_{\max} \ell^\alpha}{\epsilon \ell^3} + \P(\A^{\ell}_{\tilde\tau_\ell} - \B^{\ell}_{\tilde\tau_\ell} 
    > \ell^\alpha),
    \end{align*}
    which tends to zero as $\ell\to\infty$,
    by Lemma~\ref{Lem:starting_situation_after_tau_C} and
    \begin{align*}
        &\mathbb{E}\Big[\P\Big(\sum_{i = \tilde\tau_\ell + 1}^{\tilde\tau_\ell + g(\ell)} \int_{\B^{\ell}_i}^{\A^{\ell}_i}  \mu(i/\ell^2, s/\ell^2) \, \boldsymbol{\calP}_{\ell, i} (\dd s)  > \epsilon \ell^3 | \mathcal{F}_{\tilde\tau_\ell + 1}\Big) \mathbbm{1}\big\{\A^{\ell}_{\tilde\tau_\ell + 1} - \B^{\ell}_{\tilde\tau_\ell + 1} < \ell^{\alpha}\big\} \Big] \\
        &= \mathbb{E}\Big[\P\Big(\sum_{i = 0}^{g(\ell)-1} \int_{\B^{(a, b, j, \ell)}_i}^{\A^{(a, b, j, \ell)}_i}  \mu(i/\ell^2, s/\ell^2) \, \boldsymbol{\calP}_{\ell, i} (\dd s)  > \epsilon \ell^3\Big)_{a = \A^{\ell}_{\tilde\tau_\ell + 1}, b = \B^{\ell}_{\tilde\tau_\ell + 1}, j = \tilde\tau_\ell + 1} \mathbbm{1}\big\{\A^{\ell}_{\tilde\tau_\ell + 1} - \B^{\ell}_{\tilde\tau_\ell + 1} < \ell^{\alpha}\big\} \Big] \\
        &\leq \mathbb{E}\Big[ \mathbbm{1}\big\{\A^{\ell}_{\tilde\tau_\ell + 1} - \B^{\ell}_{\tilde\tau_\ell + 1} < \ell^{\alpha}\big\}\Big( \P\Big(\sum_{i = 0}^{g(\ell)-1} \int_{b - g(\ell)(\log\ell)^2}^{a + g(\ell)(\log\ell)^2 }  \mu_{\max} \, \boldsymbol{\calP}_{\ell, i} (\dd s)  > \epsilon \ell^3\Big) \\
        &\quad + \P\Big(\max_{1 \leq i \leq g(\ell)} \big(\A^{(a, b, j, \ell)}_i - \A^{(a, b, j, \ell)}_{i-1}\big) \vee \big(-\B^{(a, b, j, \ell)}_i + \B^{(a, b, j, \ell)}_{i-1}\big)  > (\log\ell)^2\Big) \Big)_{a = \A^{\ell}_{\tilde\tau_\ell + 1}, b = \B^{\ell}_{\tilde\tau_\ell + 1}, j = \tilde\tau_\ell + 1} \Big] \\
        &\leq \frac{g(\ell) \mu_{\max} \lambda_{\max} (\ell^{\alpha} + 2 g(\ell) (\log\ell)^2)}{\epsilon \ell^3} + \mathbb{E}\Big[ \mathbbm{1}\big\{\A^{\ell}_{\tilde\tau_\ell + 1} - \B^{\ell}_{\tilde\tau_\ell + 1} < \ell^{\alpha}\big\}\\
        &\qquad\Big(\P\Big(\max_{1 \leq i \leq g(\ell)} \big(\A^{(a, b, j, \ell)}_i - \A^{(a, b, j, \ell)}_{i-1}\big) \vee \big(-\B^{(a, b, j, \ell)}_i + \B^{(a, b, j, \ell)}_{i-1}\big)  > (\log\ell)^2\Big) \Big)_{a = \A^{\ell}_{\tilde\tau_\ell + 1}, b = \B^{\ell}_{\tilde\tau_\ell + 1}, j = \tilde\tau_\ell + 1} \Big],
    \end{align*} 
   where we used Markov's inequality to obtain the first term on the right-hand side. Now, the first summand on the right-hand side is bounded by $(3/\epsilon) (\log \ell)^2 \ell^{4\alpha - 3}$, which tends to zero as $\ell \to \infty$, and the second summand on the right-hand side is bounded by $2\ell^{2\alpha} e^{-2\lambda_{\min}(\log \ell)^2}$, which also tends to zero as $\ell \to \infty$. This completes the proof.
\end{proof}

\subsection{Main result for vertically inhomogeneous semi lattices}
\begin{proof}[Proof of Theorem~\ref{Thm:Poisson:convergence-Tl-taul}: vertically inhomogeneous case]
We already have
\begin{align*}
    \Big(  \frac{\tilde\tau_\ell}{\ell^2}, \frac{\tilde\T_{\ell}}{\ell^3} 
     \Big)\xlongrightarrow{d}
\Big( \vartheta, \int_{0}^{\vartheta} (1+M_x)\lambda(x,0)\mu(x,0)\,\dd x \Big),\qquad\text{ as }\ell\to\infty,
\end{align*}  
by Proposition~\ref{Prop:convergence-in-dist-upto-tauC}.
But \(\ell^{-2}\big|\tilde\tau_\ell - \boldsymbol{\mathsf \tau}_{\ell} \big| \xlongrightarrow{p} 0\) by Lemma~\ref{le:tau_C_vs_tau_0_inhom} and \(\ell^{-3} \big|\tilde\T_{\ell} - \T_{\ell} \big| \xlongrightarrow{p} 0\) by Lemma~\ref{le:small}, so that
\begin{align*}
    \Big( \frac{\boldsymbol{\mathsf \tau}_{\ell}}{\ell^2}, \frac{ \T_{\ell}}{\ell^3}\Big) \xlongrightarrow{ d } \Big(  \vartheta, \int_{0}^{\vartheta} \big(1+ M_{x}\big) \lambda(x,0)\mu(x,0) \,\dd x \Big),\qquad\text{ as }\ell\to\infty,
\end{align*} is implied by Slutsky's Theorem.
\end{proof}


\subsection{Proofs for diluted lattices}\label{Sec:Proofs_DL}
As in the case of the semi lattice, we first address the vertically homogeneous diluted lattices; the results can then be extended to the vertically inhomogeneous case using similar approximation techniques as those employed for the semi lattices. Without loss of generality, we assume that $\ell/2$ is an  odd positive integer.

As in the case of semi lattices, our starting point here is again the following definition of upper and lower bounding paths. We set $A_0^\ell := \ell/2$ and $B_0^\ell := -\ell/2$, and define them recursively for each $i \geq 0$ as follows,
\begin{align*}
    \overline{U}^\ell_i &:= \inf\{y > A^\ell_i \colon (i,y)\in\Oi \}, &
    \overline{L}^\ell_i &:= \inf\{y \geq B^\ell_i \colon (i,y)\in\Oi \}, \\
    \underline{U}^\ell_i &:= \sup\{y \leq A^\ell_i \colon (i,y)\in\Oi \}, &
    \underline{L}^\ell_i &:= \sup\{y < B^\ell_i \colon (i,y)\in\Oi \}, 
\end{align*}
and
\[
    \overline{X}^\ell_i := (\overline{U}^\ell_i- A^\ell_i+1)/2,\quad \underline{X}^\ell_i :=  (A^\ell_i - \underline{U}^\ell_i+1)/2,\quad
    \overline{Y}^\ell_i := (\overline{L}^\ell_i-  B^\ell_i+1)/2 , \quad 
    \underline{Y}^\ell_i := (B^\ell_i - \underline{L}^\ell_i+1)/2.
\]
and then we update the bounding paths according to the following rules.
\begin{align*}
    A^\ell_{i+1}:&= A^\ell_i+ \begin{cases}
        \overline{X}^\ell_i - \underline{X}^\ell_i  & \text{ if } \overline{X}^\ell_i - \underline{X}^\ell_i \text{ is odd};\\[.25cm]
        \overline{X}^\ell_i - \underline{X}^\ell_i +1 & \text{ if } \overline{X}^\ell_i - \underline{X}^\ell_i \text{ is even and }\xi_{( i+1, A^\ell_i+ \overline{X}^\ell_i - \underline{X}^\ell_i)}=0;\\[.25cm]
        \overline{X}^\ell_i - \underline{X}^\ell_i -1 & \text{ if } \overline{X}^\ell_i - \underline{X}^\ell_i \text{ is even and }  \xi_{( i+1, A^\ell_i+ \overline{X}^\ell_i - \underline{X}^\ell_i)}=1;
    \end{cases}\\[.25cm]
     B^\ell_{i+1}:&= B^\ell_i+ \begin{cases}
        \overline{Y}^\ell_i - \underline{Y}^\ell_i  & \text{ if } \overline{Y}^\ell_i - \underline{Y}^\ell_i \text{ is odd};\\[.25cm]
        \overline{Y}^\ell_i - \underline{Y}^\ell_i +1 & \text{ if } \overline{Y}^\ell_i - \underline{Y}^\ell_i \text{ is even and }\xi_{( i+1, B^\ell_i+ \overline{Y}^\ell_i - \underline{Y}^\ell_i)}=0;\\[.25cm]
        \overline{Y}^\ell_i - \underline{Y}^\ell_i -1 & \text{ if } \overline{Y}^\ell_i - \underline{Y}^\ell_i \text{ is even and }  \xi_{( i+1, B^\ell_i+ \overline{Y}^\ell_i - \underline{Y}^\ell_i)}=1.
    \end{cases}
\end{align*}

Note that the sequences $\{A^\ell_{i+1} - A^\ell_i\colon i \geq 0\}$ and $\{B^\ell_{i+1} - B^\ell_i\colon i \geq 0\}$ consist of independent random variables. For each $i \geq 0$, the variables $\overline{X}^\ell_i$, $\underline{X}^\ell_i$, $\overline{Y}^\ell_i$, and $\underline{Y}^\ell_i$ are  $\Geo(p(i/\ell^2))$ random variables.

As earlier, we let 
\[
  \tau_{\ell} = \inf \big\{ i \geq 0 \colon \#\big( (\{i\} \times [B^\ell_i, A^\ell_i]) \cap \Oi \big) = 0 \big\}\quad\text{ and }\quad  \tau'_{\ell} := \inf \big\{ i \geq 0 \colon \#\big( (\{i\} \times [B^\ell_i, A^\ell_i]) \cap \Oi \big) \leq 1 \big\}.
\]
As before, define the total traffic accumulated prior to times $\tau_\ell$ and $\tau'_\ell$, respectively via
\[
T_\ell = \sum_{i=0}^{\tau_0^\ell - 1} \#\big( (\{i\} \times [B^\ell_i, A^\ell_i]) \cap \Oi \big)\mu(i/\ell^2)\quad\text{ and }\quad
T'_\ell := \sum_{i=0}^{\tau_1^\ell - 1} \#\big( (\{i\} \times [B^\ell_i, A^\ell_i]) \cap \Oi \big)\mu(i/\ell^2).
\]
As earlier, to analyze these quantities, we couple the processes $\{A^{\ell}_i\colon i \geq 0\}$ and $\{B^{\ell}_i\colon i \geq 0\}$ with two independent processes $\{\hat{A}^{\ell}_i\colon i \geq 0\}$ and $\{\hat{B}^{\ell}_i\colon i \geq 0\}$ as follows. 
Let $\{\hat{\overline{X}}^\ell_i\colon i \geq 0\}$, $\{\hat{\underline{X}}^\ell_i\colon i \geq 0\}$, $\{\hat{\overline{Y}}^\ell_i\colon i \geq 0\}$, and $\{\hat{\underline{Y}}^\ell_i\colon i \geq 0\}$ be four independent sequences of independent random variables. For all $i \geq 0$, the random variables $\hat{\overline{X}}^\ell_i$, $\hat{\underline{X}}^\ell_i$, $\hat{\overline{Y}}^\ell_i$, and $\hat{\underline{Y}}^\ell_i$ are i.i.d.\ $\Geo(p(i/\ell^2))$. In addition, let $\{\hat{\xi}_{\x}\colon \x\in\ZZe\}$ and $\{\hat{\xi}'_{\x}\colon \x\in\ZZe\}$ be two independent sequences consisting of i.i.d.\ $\Ber(1/2)$ random variables each, with these six sequences being independent of each other. Then, setting ${\hat A}^\ell_0=\ell/2$, ${\hat B}^\ell_0=\ell/2$, we define  for all $i\geq 0$,
\begin{align*}
    {\hat A}^\ell_{i+1}:&= {\hat A}^\ell_i+ \begin{cases}
        \hat{{\overline X}}^\ell_i - \hat{{\underline X}}^\ell_i  & \text{ if } \hat{{\overline X}}^\ell_i - \hat{{\underline X}}^\ell_i \text{ is odd};\\[.25cm]
        \hat{{\overline X}}^\ell_i - \hat{{\underline X}}^\ell_i +1 & \text{ if } \hat{{\overline X}}^\ell_i - \hat{{\underline X}}^\ell_i \text{ is even and }\xi_{( i+1, {\hat A}^\ell_i+ \hat{{\overline X}}^\ell_i - \hat{{\underline X}}^\ell_i )}=0;\\[.25cm]
        \hat{{\overline X}}^\ell_i - \hat{{\underline X}}^\ell_i -1 & \text{ if } \hat{{\overline X}}^\ell_i - \hat{{\underline X}}^\ell_i \text{ is even and }  \xi_{( i+1, {\hat A}^\ell_i+ \hat{{\overline X}}^\ell_i - \hat{{\underline X}}^\ell_i )}=1;
    \end{cases}\\[.25cm]
     {\hat B}^\ell_{i+1}:&= {\hat B}^\ell_i+ \begin{cases}
        \hat{{\overline Y}}^\ell_i - \hat{{\underline Y}}^\ell_i  & \text{ if } \hat{{\overline Y}}^\ell_i - \hat{{\underline Y}}^\ell_i \text{ is odd};\\[.25cm]
        \hat{{\overline Y}}^\ell_i - \hat{{\underline Y}}^\ell_i +1 & \text{ if } \hat{{\overline Y}}^\ell_i - \hat{{\underline Y}}^\ell_i \text{ is even and }\xi_{( i+1, {\hat B}^\ell_i+ \hat{{\overline Y}}^\ell_i - \hat{{\underline Y}}^\ell_i )}=0;\\[.25cm]
        \hat{{\overline Y}}^\ell_i - \hat{{\underline Y}}^\ell_i -1 & \text{ if } \hat{{\overline Y}}^\ell_i - \hat{{\underline Y}}^\ell_i \text{ is even and }  \xi_{( i+1, {\hat B}^\ell_i+ \hat{{\overline Y}}^\ell_i - \hat{{\underline Y}}^\ell_i )}=1.
    \end{cases}
\end{align*}
We also define
\begin{align*}
    \hat \tau_\ell:= \inf\Big\{i\geq 0\colon {\hat A}^\ell_i- 2\hat{{\underline X}}^\ell_i< {\hat B}^\ell_i+2\hat{{\overline Y}}^\ell_i \Big\}.
\end{align*}
Then, analogous to the semi-lattice case, we have
\begin{align}
    &\Big(\hat \tau_\ell , \big\{\big(\hat A^\ell_{i+1}, \hat B^\ell_{i+1}, \hat{\overline{X}}^\ell_i, \hat{\underline{X}}^\ell_i, \hat{\overline{Y}}^\ell_i, \hat{\underline{Y}}^\ell_i\big)\ind{i< \hat \tau_\ell }\colon i\geq 0 \big\} \Big) \nonumber\\
    &\qquad\qquad\xlongequal{ d } 
    \Big(\tau'_{\ell}, \big\{\big( A^\ell_{i+1},  B^\ell_{i+1}, {\overline{X}}^\ell_i, {\underline{X}}^\ell_i, {\overline{Y}}^\ell_i, {\underline{Y}}^\ell_i\big)\ind{i< \tau'_{\ell} }\colon i\geq 0  \big\}  \Big).
\end{align}

As before, we define a sequence of processes $\{W_\ell\colon\ell \geq 1\}$, where each $W_\ell = \{W_{\ell,t} \colon t \geq 0\}$ is given by
\begin{align}
    W_{\ell,t} := \frac{1}{\ell} \big( D^\ell_{\lfloor \ell^2 t \rfloor} + ( \ell^2 t - \lfloor \ell^2 t \rfloor) \big( D^\ell_{\lfloor \ell^2 t \rfloor + 1} - D^\ell_{\lfloor \ell^2 t \rfloor} \big) \big),
\end{align}
and where, for all $i \geq 0$, we define $D^\ell_i := \hat{A}^\ell_i - \hat{B}^\ell_i$. Then, we have the following result.
\begin{lemma}
\label{Lem:Variance-of-increments-of-D-in-Diluted-lattice}
$\Var[D^\ell_{i+1}-D^\ell_i]= \beta_{p(i/\ell^2,0)}$,
where $\beta_q := \frac{q^4 +(2-q)^4 }{q^2(2-q)^2}$.
\end{lemma}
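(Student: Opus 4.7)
The plan is to reduce the claim to a direct computation using the explicit distributions of the increments. Since $\hat A^\ell_{i+1}-\hat A^\ell_i$ depends only on $\hat{\overline X}^\ell_i, \hat{\underline X}^\ell_i$ and an independent fair Bernoulli, while $\hat B^\ell_{i+1}-\hat B^\ell_i$ depends only on the analogous $Y$-family and an independent fair Bernoulli, the two increments are independent and equidistributed. Hence
\[
\Var\big[D^\ell_{i+1}-D^\ell_i\big]=2\Var\big[\hat A^\ell_{i+1}-\hat A^\ell_i\big],
\]
so it suffices to compute $\Var[\hat A^\ell_{i+1}-\hat A^\ell_i]$ and check that twice the result equals $\beta_{p(i/\ell^2)}$.

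Write $p=p(i/\ell^2)$ and $Z=\hat{\overline X}^\ell_i-\hat{\underline X}^\ell_i$, where $\hat{\overline X}^\ell_i,\hat{\underline X}^\ell_i$ are i.i.d.\ $\Geo(p)$ with support $\{1,2,\dots\}$. By the recursion, the increment $\Delta:=\hat A^\ell_{i+1}-\hat A^\ell_i$ equals $Z$ when $Z$ is odd and $Z+\varepsilon$ when $Z$ is even, where $\varepsilon$ is $\pm 1$ with equal probability, independent of $Z$. Then $\E[\Delta]=0$ by the symmetry $Z\stackrel{d}{=}-Z$ and the symmetry of $\varepsilon$, and
\[
\E[\Delta^2]=\E\!\big[Z^2\ind{Z\text{ odd}}\big]+\E\!\big[(Z^2+1)\ind{Z\text{ even}}\big]=\E[Z^2]+\P(Z\text{ even}).
\]

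The two remaining pieces are standard. First, $\E[Z^2]=2\Var[\hat{\overline X}^\ell_i]=2(1-p)/p^2$. Second, since $\P(\hat{\overline X}^\ell_i\text{ odd})=p/(1-(1-p)^2)=1/(2-p)$ and $\P(\hat{\overline X}^\ell_i\text{ even})=(1-p)/(2-p)$, independence gives
\[
\P(Z\text{ even})=\Big(\tfrac{1}{2-p}\Big)^{2}+\Big(\tfrac{1-p}{2-p}\Big)^{2}=\frac{1+(1-p)^2}{(2-p)^2}.
\]
Doubling and putting the two terms over the common denominator $p^2(2-p)^2$, the numerator becomes $2(1-p)(2-p)^2+p^2(1+(1-p)^2)$. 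Expanding gives $16-32p+24p^2-8p^3+2p^4$, which, using $(2-p)^4=16-32p+24p^2-8p^3+p^4$, equals $p^4+(2-p)^4$. This matches $\beta_p$ and concludes the proof.

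There is no real obstacle here; the only minor care needed is to argue that $\E[\Delta]=0$ (so that the variance equals the second moment) and to verify that the coin $\xi$ appearing in the recursion is indeed independent of $Z$ and uniform on $\{0,1\}$, which is built into the coupling used to define the hatted process.
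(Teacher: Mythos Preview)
Your proof is correct and takes a somewhat different route from the paper. Both arguments begin by reducing to $2\Var[\Delta]$ with $\Delta=\hat A^\ell_{i+1}-\hat A^\ell_i$ via independence and equidistribution. The paper then computes the full distribution of $Z=\hat{\overline X}^\ell_i-\hat{\underline X}^\ell_i$ (a two-sided geometric) and goes further to identify $(|\Delta|+1)/2$ as $\Geo\big(p(2-p)\big)$, after which the variance follows from the mean and variance formulas for a geometric random variable. You instead bypass any distributional identification via the decomposition $\E[\Delta^2]=\E[Z^2]+\P(Z\text{ even})$, which reduces everything to two elementary inputs: the variance of a geometric and a parity probability. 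Your approach is shorter and more direct; the paper's yields the full law of $|\Delta|$, which is a little more information but not needed here. One small slip in your write-up: after doubling, the numerator over $p^2(2-p)^2$ should read $4(1-p)(2-p)^2+2p^2\big(1+(1-p)^2\big)$, not half that; your subsequent expansion $16-32p+24p^2-8p^3+2p^4$ is nonetheless the correct doubled value, so the conclusion stands.
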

\begin{proof}[Proof of Lemma~\ref{Lem:Variance-of-increments-of-D-in-Diluted-lattice}]
To simplify notation, we denote $p(i/\ell^2)$ by $p$.
Since the increments of $\{{\hat A}^\ell_i\colon i \geq 0\}$ and $\{{\hat B}^\ell_i\colon i \geq 0\}$ are independent, and, for all $i \geq 0$, the random variables ${\hat A}^\ell_{i+1} - {\hat A}^\ell_i$ and ${\hat B}^\ell_{i+1} - {\hat B}^\ell_i$ are identically distributed, it follows that
  $ \Var[D^{\ell}_{i+1}-D^{\ell}_{i}]= 2 \Var[{\hat A}^\ell_{i+1}- {\hat A}^\ell_i]$. Observe that, for any $k\geq 0$,
\begin{align*}
    \P\big(\hat{{\overline X}}^\ell_i-\hat{{\underline X}}^\ell_i= -k\big) &=\P\big(\hat{{\overline X}}^\ell_i-\hat{{\underline X}}^\ell_i = k\big)= \sum_{j=1}^{\infty} \P\big(\hat{{\underline X}}^\ell_i=j\big)\P\big(\hat{{\overline X}}^\ell_i=j+k\big)\\
    &=  \sum_{j=1}^{\infty} (1-p)^{j-1}p(1-p)^{j+k-1}p= (1-p)^{k}p^2\sum_{j=1}^{\infty} (1-p)^{2j-2}
    =\frac{(1-p)^{k}p^2}{1-(1-p)^2}=\frac{(1-p)^{k}p}{2-p}.
\end{align*}
Now, for all $k\geq 0$,
\begin{align*}
    \P\big(|{\hat A}^\ell_{i+1}-{\hat A}^\ell_i|+1=2(k+1)\big)
    &=  \P({\hat A}^\ell_{i+1}-{\hat A}^\ell_i=2k+1)+ \P({\hat A}^\ell_{i+1}-{\hat A}^\ell_i=-2k-1)\\
    &= 2\P({\hat A}^\ell_{i+1}-{\hat A}^\ell_i=2k+1)\\
    &= 2\P(\hat{{\overline X}}^\ell_i-\hat{{\underline X}}^\ell_i= 2k+1)+\P(\hat{{\overline X}}^\ell_i-\hat{{\underline X}}^\ell_i= 2k)+\P(\hat{{\overline X}}^\ell_i-\hat{{\underline X}}^\ell_i= 2k+2)\\
   &= \frac{2(1-p)^{2k+1}p}{2-p}+\frac{(1-p)^{2k}p}{2-p}+\frac{(1-p)^{2k+2}p}{2-p}\\
   &= \frac{(1-p)^{2k}p}{2-p} \Big( 2(1-p)+1 +(1-p)^2 \Big)\\
   &= \frac{(1-p)^{2k}p}{2-p} (2-p)^2 =  (1-p)^{2k}p(2-p)= (1-p)^{2k} \big( 1- (1-p)^2 \big),
\end{align*}
which means that $(|{\hat A}^\ell_{i+1}-{\hat A}^\ell_i|+1)/2  \sim \Geo(1- (1-p)^2)$. So, we have
\[
\Var\Big[ \frac{|{\hat A}^\ell_{i+1}-{\hat A}^\ell_i|+1}{2} \Big] = \frac{(1-p)^2}{(1- (1-p)^2)^2}\quad\text{ and }\quad
\E\Big[ \frac{|{\hat A}^\ell_{i+1}-{\hat A}^\ell_i|+1}{2} \Big] = \frac{1}{1- (1-p)^2}.
\]
Therefore, we get
\begin{align*}
    \Var[{\hat A}^\ell_{i+1}-{\hat A}^\ell_i]
    &= \E\big[ ({\hat A}^\ell_{i+1}-{\hat A}^\ell_i)^2 \big]
    =\Var\big[|{\hat A}^\ell_{i+1}-{\hat A}^\ell_i|\big] + \big(\E\big[|{\hat A}^\ell_{i+1}-{\hat A}^\ell_i|\big] \big)^2\\
    &=\frac{4(1-p)^2}{(1- (1-p)^2)^2}+ \Big( \frac{1+ (1-p)^2}{1- (1-p)^2}\Big)^2
    =\frac{(1-p)^4 +6(1-p)^2 +1}{(1- (1-p)^2)^2}\\
    &=\frac{(1-(1-p))^4 +(1+(1-p))^4}{2p^2(2-p)^2}=\frac{p^4 +(2-p)^4 }{2p^2(2-p)^2}.
\end{align*}
But this implies that
\[
\Var[D^\ell_{i+1}-D^\ell_i]= 2\Var[{\hat A}^\ell_{i+1} -{\hat A}^\ell_i]
=\frac{p^4 +(2-p)^4 }{p^2(2-p)^2} =\beta_p,
\] 
as desired.
\end{proof}

We now state the following result, whose proof follows from arguments similar to those used in the proof of Lemma~\ref{Lem:Wlt-weak-convergence}, in combination with Lemma~\ref{Lem:Variance-of-increments-of-D-in-Diluted-lattice}.
\begin{lemma} 
As $\ell \to \infty$, the process $W_\ell=\{W_{\ell,t} \colon t \geq 0\}$ converges weakly to 
    \[
    1+M_{\cdot}=\big\{1+M_t \colon t\geq 0\big\}:=
    \Big\{1+\int_{0}^t \sqrt{\beta_{p(s)}} \,\dd B_s\colon t \geq 0\Big\}.
    \]
\end{lemma}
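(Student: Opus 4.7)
The plan is to follow closely the proof of Lemma~\ref{Lem:Wlt-weak-convergence}, with modifications to account for the fact that the standardized increments are no longer i.i.d.\ in the diluted setting. First I would reduce to showing weak convergence as processes on every compact interval $[0,K]$. Setting $c(s):=\sqrt{\beta_{p(s)}}$, which by continuity of $p$ and the lower bound $p\ge p_{\min}$ is continuous and bounded away from $0$ and $\infty$ on $[0,K]$, I would define the standardized increments
\[
V_j^\ell := \frac{D_j^\ell - D_{j-1}^\ell}{c((j-1)/\ell^2)}, \qquad j\ge 1.
\]
By Lemma~\ref{Lem:Variance-of-increments-of-D-in-Diluted-lattice}, these are independent with mean $0$ and variance $1$, and their distributions depend on $\ell, j$ only through $p((j-1)/\ell^2)\in[p_{\min},1]$, so they enjoy uniform higher-moment bounds. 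One can then write $W_{\ell,t}$ as $1$ plus a weighted sum of the $V_j^\ell$'s (plus a linear interpolation correction) with weights $c((j-1)/\ell^2)/\ell$.

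The next step is to approximate $c$ in $L^2([0,K])$ by piecewise constant functions $c^{(m)}$ and define the corresponding processes $W_\ell^{(m)}$. For fixed $m$, I would establish
\[
\{W_{\ell,t}^{(m)}\colon 0\le t\le K\} \xlongrightarrow{ d } \Big\{1+\int_0^t c^{(m)}(s)\,\dd B_s\colon 0\le t\le K\Big\} \quad\text{as } \ell\to\infty,
\]
and then send $m\to\infty$, using It\^o isometry on the limiting side to recover $\int_0^t c(s)\,\dd B_s=M_t$.

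The concluding step is to control the error $\sup_{t\in[0,K]}|W_{\ell,t}-W_{\ell,t}^{(m)}|$. Since both processes share the same breakpoints in their linear interpolation, Kolmogorov's maximal inequality applied to the partial sums of the independent mean-zero random variables $(c((j-1)/\ell^2)-c^{(m)}((j-1)/\ell^2))V_j^\ell$ gives a bound that tends, as $\ell\to\infty$ and then $m\to\infty$, to zero by the $L^2$-approximation property of $c^{(m)}$. Combining these ingredients via \cite[Theorem~3.2]{Billingsley99} completes the argument.

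The main obstacle will be the functional CLT for $W_\ell^{(m)}$, since the $V_j^\ell$ are \emph{not} identically distributed and Donsker's invariance principle---used directly in the semi-lattice proof---does not apply verbatim. I envision two routes around this. Either invoke a Lindeberg--Feller functional CLT for triangular arrays of independent random variables, whose hypotheses (Riemann-sum convergence of quadratic variation plus the Lindeberg condition) are easily verified thanks to the uniform moment bounds on the $V_j^\ell$ and the continuity and boundedness of $c^{(m)}$; or approximate $p$ itself (rather than $c$) by a piecewise constant function, so that on each constancy interval the $V_j^\ell$ are i.i.d.\ and Donsker applies piece by piece, with a standard gluing argument linking adjacent pieces. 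Either route is routine once the uniform moment estimates on the geometric-based distributions of the $V_j^\ell$ are noted.
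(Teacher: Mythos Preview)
Your proposal is correct and follows essentially the same approach as the paper, which simply states that the proof ``follows from arguments similar to those used in the proof of Lemma~\ref{Lem:Wlt-weak-convergence}, in combination with Lemma~\ref{Lem:Variance-of-increments-of-D-in-Diluted-lattice}.'' You have in fact been more careful than the paper: you correctly observe that, unlike the semi-lattice case where the standardized increments $V_j$ have a fixed Laplace-type law independent of $j$ and $\ell$, here the $V_j^\ell$ depend on $p((j-1)/\ell^2)$ and are not identically distributed, so raw Donsker does not apply verbatim; your proposed fixes (Lindeberg--Feller for triangular arrays, or approximating $p$ itself piecewise constant so that increments are i.i.d.\ on each block) are exactly the right way to close this gap, and the uniform moment bounds from $p\ge p_{\min}$ make either route routine.
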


\begin{proof}[Proof of Theorem~\ref{Thm:diluted:convergence-Tl-taul_gen}]
In the diluted lattice case, given $A^\ell_i$ and $B^\ell_i$, the expected number of occupied vertices between them is $(A^\ell_i - B^\ell_i)\, p(i/\ell^2)/2$, in contrast to $(A^\ell_i - B^\ell_i)\, \lambda(i/\ell^2)$ in the semi-lattice case. This discrepancy introduces an additional factor of $1/2$ in the limiting expression appearing in Theorem~\ref{Thm:diluted:convergence-Tl-taul_gen}. The remainder of the proof follows by an argument analogous to that used in the proof of Theorem~\ref{Thm:Poisson:convergence-Tl-taul}.
\end{proof}

\begin{proof}[Proof of Theorem~\ref{Thm:diluted:asymp-taul}]
The proof of Theorem~\ref{Thm:diluted:asymp-taul} follows by an argument analogous to the one used in the proof of Theorem~\ref{Thm:Poisson:asymp-taul}.
\end{proof}


\subsection{Proofs for pure lattices}\label{Sec:Proofs_PL}
As in the diluted lattice case, here too we assume, without loss of generality, that $\ell/2$ is a positive odd integer.
In this model, we have a stronger version of coupling  as follows. 
Let $\{\hat{\xi}_{\x}\colon \x\in\ZZe\}$ and $\{\hat{\xi}'_{\x}\colon \x\in\ZZe\}$ be two independent sequences consisting of  i.i.d.\ $\Ber(1/2)$ random variables each. Then, setting ${\hat A}^\ell_0=\ell/2$, ${\hat B}^\ell_0=\ell/2$, we define  for all $i\geq 0$,
\begin{align*}
    \hat{A}^{\ell}_{i+1}:&= \hat{A}^{\ell}_i+ \begin{cases}
        1 & \text{ if  $\hat \xi_{(i+1, \hat{A}^{\ell}_i )}=0$};\\
       -1 & \text{ if $\hat \xi_{(i+1, \hat{A}^{\ell}_i )}=1$};
    \end{cases}
   \qquad \qquad \hat{B}^{\ell}_{i+1}:= \hat{B}^{\ell}_i+ \begin{cases}
        1 & \text{ if $\hat \xi'_{(i+1, \hat{B}^{\ell}_i) }=0$};\\
         -1 & \text{ if  $\hat \xi'_{(i+1, \hat{B}^{\ell}_i) }=1$};
    \end{cases}
\end{align*}
and 
\begin{align*}
    \hat \tau_\ell:= \inf\{i\geq 0 \colon \hat{A}^{\ell}_i-  \hat{B}^{\ell}_i=0 \}.
\end{align*}
Then we have  $\tau_{\ell} \xlongequal{ d }  \hat \tau_\ell $.

\begin{proof}[Proof of Theorem~\ref{Thm:Pure:asymp-taul}]
Writing $D_i:=\hat{A}^{\ell}_i - \hat{A}^{\ell}_i$, we see that $\{D_i\colon i\geq0\}$ is a random walk with $D_0=\ell$, whose increments are  $2$, $0$, or $-2$.
So we have
\begin{align*}
\P(\tau_{\ell} = n) = \P(\hat \tau_\ell=n)
&= \P( D_n=0 \text{ and } D_i>0\text{ for all $1\leq i\leq n-1$} | D_0=\ell)\\
&= \P( D_n=\ell \text{ and } D_i>0\text{ for all $1\leq i\leq n-1$} | D_0=0)\\
&=  \P( D_{n}=\ell | D_0=0)\P(  D_i >0 \text{ for all $1\leq i\leq n-1$} | D_0=0, D_{n}=\ell).
\end{align*}
Now, since the increments of $\{D_i/2\colon i \geq 0\}$ are integer-valued with a maximum value of $1$, it follows from~\cite[Theorem~2]{AdRe08} that
\[
\P(  D_i >0 \text{ for all $1\leq i\leq n-1$} | D_0=0, D_{n}=\ell)=\ell/(2n).
\]
We also have 
\begin{align*}
 \P(D_{n}=\ell | D_0=0) &= \text{coefficient of $x^\ell$ in the expansion of } \Big( \frac{x^2}{4}+\frac{1}{2}+ \frac{1}{4x^2} \Big)^n\\
&= \text{coefficient of $x^{\ell/2}$ in the expansion of } \Big( \frac{x}{4}+\frac{1}{2}+ \frac{1}{4x} \Big)^n\\
&= \text{coefficient of $x^{n+\ell/2}$ in the expansion of } \Big( x^2+2x+ 1 \Big)^n\frac{1}{4^n}\\
&= \text{coefficient of $x^{n+\ell/2}$ in the expansion of } \Big( 1+x \Big)^{2n}\frac{1}{4^n}= {\binom{2n}{n+\ell/2}}\frac{1}{4^n}.
\end{align*}
This implies,
\[
\P(\tau_{\ell} = n) = \frac{\ell}{2n} {\binom{2n}{n+\ell/2}}\frac{1}{4^n}.
\]
Now, by Stirling's approximation,  for any $f$ satisfying $\ell^2/f(\ell)\to0$ as $\ell\to \infty$ and for every $\epsilon>0$, there exists $\ell_{(\epsilon,f)}>0$ such that, for all $\ell\geq \ell_{(\epsilon,f)}$ and for all $n\geq f(\ell)$, 
\[
\frac{n^{3/2}}{\ell}\P(\tau_\ell =n) \in \Big(\frac{1}{2{\sqrt \pi}} -\epsilon, \frac{1}{2{\sqrt \pi}} +\epsilon\Big).
\]
This further implies that
\[
\lim_{\ell\to\infty}\frac{\sqrt{f(\ell)}}{\ell}\P\big(\tau_\ell \geq f(\ell)\big) = \frac{1}{{4 \sqrt \pi}}.
\]
This proves the result.
\end{proof}


\section*{Acknowledgement}
The authors would like to thank Sebastian Andres, Daniel Kamecke and Kumarjit Saha  for many fruitful discussions.
This research was supported by the Leibniz Association within the Leibniz Junior Research Group on {\em Probabilistic Methods for Dynamic Communication Networks} as part of the Leibniz Competition (grant no.\ J105/2020) and the Berlin Cluster of Excellence {\em MATH+} through the project {\em EF45-3} on {\em Data Transmission in Dynamical Random Networks}.



\begin{thebibliography}{10}

\bibitem{AdRe08}
L.~Addario-Berry and B.A. Reed.
\newblock Ballot theorems, old and new.
\newblock In {\em Horizons of Combinatorics}, volume~17 of {\em Bolyai Soc.
  Math. Stud.}, pages 9--35. Springer, Berlin, 2008.

\bibitem{arak1975}
T.V. Arak.
\newblock On the distribution of the maximum of successive partial sums of
  independent random variables.
\newblock {\em Theory Probab. Appl.}, 19(2):245--266, 1975.
\newblock English translation of \textit{Teor. Verojatnost. i Primenen.} 19
  (1974).

\bibitem{arratia1979coalescing}
R.A. Arratia.
\newblock {\em Coalescing Brownian Motions on the Line}.
\newblock The University of Wisconsin-Madison, 1979.

\bibitem{arratia1981coalescing}
R.A. Arratia.
\newblock Coalescing brownian motions and the voter model on {Z}.
\newblock {\em Unpublished partial manuscript. Available from
  rarratia@math.usc.edu}, 1981.

\bibitem{Billingsley99}
P.~Billingsley.
\newblock {\em Convergence of Probability Measures}.
\newblock Wiley Series in Probability and Statistics: Probability and
  Statistics. John Wiley \& Sons, Inc., New York, second edition, 1999.

\bibitem{coletti2009scaling}
C.~Coletti, L.~Fontes, and E.~Dias.
\newblock Scaling limit for a drainage network model.
\newblock {\em Journ. Appl. Probab.}, 46(4):1184--1197, 2009.

\bibitem{ferrari2005two}
P.~Ferrari, L.~Fontes, and X.-Y. Wu.
\newblock Two-dimensional {P}oisson trees converge to the {B}rownian web.
\newblock {\em Ann. inst. Henri Poincare (B) Probab. Stat.}, 41(5):851--858,
  2005.

\bibitem{ferrari2004poisson}
P.~Ferrari, C.~Landim, and H.~Thorisson.
\newblock Poisson trees, succession lines and coalescing random walks.
\newblock In {\em Ann. inst. Henri Poincare (B) Probab. Stat.}, volume~40,
  pages 141--152, 2004.

\bibitem{fontes2004brownian}
L.~Fontes, M.~Isopi, C.M. Newman, and K.~Ravishankar.
\newblock The {B}rownian web: characterization and convergence.
\newblock {\em Ann. Probab.}, 32(4):2857--2883, 2004.

\bibitem{gangopadhyay2004random}
S.~Gangopadhyay, R.~Roy, and A.~Sarkar.
\newblock Random oriented trees: a model of drainage networks.
\newblock {\em Ann. Appl. Probab.}, 14(3):1242--1266, 2004.

\bibitem{hirsch2017traffic}
C.~Hirsch, B.~Jahnel, P.~Keeler, and R.I.A. Patterson.
\newblock Traffic flow densities in large transport networks.
\newblock {\em Adv. in Appl. Probab.}, 49(4):1091--1115, 2017.

\bibitem{KeMa05}
M.J. Kearney and S.N. Majumdar.
\newblock On the area under a continuous time {B}rownian motion till its
  first-passage time.
\newblock {\em J. Phys. A}, 38(19):4097--4104, 2005.

\bibitem{keeler2010stochastic}
P.~Keeler and P.~Taylor.
\newblock A stochastic analysis of a greedy routing scheme in sensor networks.
\newblock {\em SIAM J. Appl. Math.}, 70(7):2214--2238, 2010.

\bibitem{penrose2010limit}
M.D. Penrose and A.R. Wade.
\newblock Limit theorems for random spatial drainage networks.
\newblock {\em Adv. in Appl. Probab.}, 42(3):659--688, 2010.

\bibitem{RevYor99}
D.~Revuz and M.~Yor.
\newblock {\em Continuous {M}artingales and {B}rownian {M}otion}, volume 293 of
  {\em Grundlehren der mathematischen Wissenschaften}.
\newblock Springer-Verlag, Berlin, third edition, 1999.

\bibitem{RoSaSa16}
R.~Roy, K.~Saha, and A.~Sarkar.
\newblock Hack's law in a drainage network model: a {B}rownian web approach.
\newblock {\em Ann. Appl. Probab.}, 26(3):1807--1836, 2016.

\end{thebibliography}

\end{document}